\theoremstyle{plain}
\newtheorem{theorem}{Theorem}[section]
\newtheorem{lemma}[theorem]{Lemma}
\newtheorem{definition}[theorem]{Definition}
\newtheorem{proposition}[theorem]{Proposition}
\newtheorem{corollary}[theorem]{Corollary}
\newtheorem{claim}[theorem]{Claim}
\newtheorem{fact}[theorem]{Fact}
\newcommand{\nc}{\newcommand}
\nc{\dmo}{\DeclareMathOperator}
\dmo{\Conf}{Conf}
\dmo{\Mod}{Mod}
\dmo{\PMod}{PMod}
\dmo{\Fix}{Fix}
\dmo{\Diff}{Diff}
\dmo{\C}{\mathbb{C}}
\dmo{\Z}{\mathbb{Z}}
\dmo{\CRS}{\mathcal{C}}
\nc{\F}{\mathcal F}
\nc{\G}{\mathcal G}
\dmo{\PMF}{PMF}
\nc{\lei}[1]{{\color{red} \sf  L: [#1]}}
\nc{\aru}[1]{{\color{blue} \sf  A: [#1]}}
\nc{\margin}[1]{\marginpar{\scriptsize #1}}
\DeclarePairedDelimiter\floor{\lfloor}{\rfloor}
\nc{\p}[1]{\medskip\noindent\textbf{#1}}
\title{From braid groups to mapping class groups}
\author{Lei Chen$^{1}$, Aru Mukherjea$^{2}$}
\address{$^1$ Department of Mathematics, University of Maryland College Park}
\address{\ \ \ 4176 Campus Dr, College Park, MD 20742}
\address{$^2$ Department of Mathematics, University of Texas Austin}
\address{\ \ \ 2515 Speedway, Austin, TX 78712
}
\email{chenlei@umd.edu, amukherj@utexas.edu}
\begin{document}

\maketitle

\begin{abstract}
In this paper, we classify homomorphisms from the braid group on $n$ strands to the mapping class group of a genus $g$ surface. In particular, we show that when $g<n-2$, all representations are either cyclic or standard. Our result is sharp in the sense that when $g=n-2$, a generalization of the hyperelliptic representation appears, which is not cyclic or standard. This gives a classification of surface bundles over the configuration space of the complex plane. As a corollary, we partially recover the result of Aramayona--Souto \cite{AS}, which classifies homomorphisms between mapping class groups, with a slight improvement.

\bigskip\bigskip

\noindent\textsc{Key words. }  Braid groups, mapping class groups, totally symmetric sets, geometric topology
\end{abstract}

\section{Introduction}

Let $B_n:=\pi_1(\Conf_n(\C))$ be the \emph{braid group} on $n$ strands, where $\Conf_n(\C)$ is the \emph{unordered configuration space}, or the space of unordered $n$-tuples of points on $\C$.
For $S_{g,p}$ the surface of genus $g$ with $p$ punctures, let $\PMod(S_{g,p}) := \pi_0(\Diff^+(S_{g,p}))$ be the \emph{pure mapping class group}, where $\Diff^+(S_{g,p})$ is the group of orientation-preserving diffeomorphisms on $S_{g,p}$ fixing the $p$ punctures pointwise. 

A \emph{geometric representation} of a group $G$ is any homomorphism $G\to \PMod(S_{g,p})$. A natural problem is to classify all geometric representations of the braid group $B_n$. Since $\Conf_n(\mathbb{C}) = K(B_n,1)$ and $K(\PMod(S_{g,p}),1)$ is the classifying space for $S_{g,p}$-bundles, we know that classifying the geometric representations of $B_n$ is equivalent to classifying $S_{g,p}$-bundles over $\Conf_n(\C)$.

As $B_n$ is the mapping class group of a punctured sphere, classifying geometric representations of $B_n$ is a special case of a natural question: what are all the homomorphisms between mapping class groups? Mirzakhani conjectured that all injective homomorphisms between mapping class groups should come from ``some manipulations of surfaces", such as embeddings of surfaces and covering constructions (e.g., see \cite{AS}). Castel \cite{Castel} gives a complete classification of homomorphisms $B_n \to \PMod(S_{g,p})$ for $n\ge 6$ and $g\le \frac{n}{2}$. In this paper, we give a new proof of his result and also extend the classification to the case $g<n-2$ but in a reduced range, $n\ge 23$.

We denote the standard generating set of $B_n$ as $\{\sigma_i \mid i=1,2,...,n-1\}$. Let $i(a,b)$ denote the geometric intersection number of two curves $a,b$. We call $\{c_1,...,c_k\}$ a \emph{chain} of simple closed curves on $S_g$ if $i(c_i,c_{i+1})=1$ for $i \in \{1, 2, \ldots, k-1 \}$ and $i(c_i,c_j)=0$ for $|i-j|\ge 2$. For a simple closed curve $c$, denote the \emph{Dehn twist} about $c$ by $T_c$. For a chain of $n-1$ curves $\{c_1,...,c_{n-1}\}$, there exists a \emph{standard homomorphism}
\[
\rho_s: B_n\to \PMod(S_{g,p})
\]
such that $\rho_s(\sigma_i)=T_{c_i}$. There also exists a \emph{negative-standard homomorphism}
\[
\rho_{-s}: B_n\to \PMod(S_{g,p}),
\]
where $\rho_s(\sigma_i)=T_{c_i}^{-1}$. 
The standard homomorphism $\rho_s$ is induced by the hyper-elliptic branched cover of $S_g$ over the sphere, which is ``some manipulation of surfaces" as Mirzakhani conjectured.

We now describe a construction which turns a homomorphism into a new one, called a \emph{transvection}. Let $\rho: B_n\to \PMod(S_{g,p})$ be a homomorphism and $\phi \in \PMod(S_{g,p})$ be an element that commutes with every element of $\rho(B_n)$. We define a new homomorphism
$$\rho_\phi:B_n\to \PMod(S_{g,p})$$
 by $\rho_\phi(\sigma_i)=\phi\rho(\sigma_i)$. This new homomorphism $\rho_\phi$ is called a \emph{transvection} of $\rho$. For example, if the image of $B_n$ under $\rho$ is a cyclic group, then $\rho$ is a transvection of the trivial homomorphism. In this paper, we prove the following result.
\begin{theorem}\label{main1}
For $k\ge 13$, $n=2k$ or $n=2k+1$, and $g\le 2k-3$, any homomorphism $\rho:B_n\to \PMod(S_{g,p})$ is either trivial, $\rho_s$, or $\rho_{-s}$ up to transvection. \end{theorem}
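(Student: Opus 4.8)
The plan is to pin down the conjugacy type of $\rho(\sigma_1)$ using totally symmetric sets and then recover all of $\rho$ from the braid relations. Recall that a subset $X=\{x_1,\dots,x_m\}$ of a group is \emph{totally symmetric} if its elements pairwise commute and every permutation of $X$ is induced by a conjugation, and that a homomorphism carries a totally symmetric set to one of the same size or of size $1$ (if two images coincide then all do, since the permutations act transitively on ordered pairs of elements of $X$). In $B_n$ the odd-indexed standard generators $\{\sigma_1,\sigma_3,\dots,\sigma_{2k-1}\}$ form a totally symmetric set of size $k$: the associated half-twists are supported on disjoint arcs, and $B_n$ realizes every permutation of those arcs. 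So the first step is to observe that $\{\rho(\sigma_1),\rho(\sigma_3),\dots,\rho(\sigma_{2k-1})\}$ is a totally symmetric set in $\PMod(S_{g,p})$ of size $k$ or $1$, and to branch on this dichotomy.

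If the size is $1$, write $a:=\rho(\sigma_1)=\cdots=\rho(\sigma_{2k-1})$. For each even index $j$ there is an odd index $i$ not adjacent to $j$ (there are at least $k-1\ge 12$ even generators and $\sigma_j$ is adjacent to only two odd ones), so $\rho(\sigma_j)$ commutes with $\rho(\sigma_i)=a$; substituting this into the braid relation $\sigma_{j-1}\sigma_j\sigma_{j-1}=\sigma_j\sigma_{j-1}\sigma_j$ and using $\rho(\sigma_{j-1})=a$ yields $a^2\rho(\sigma_j)=a\,\rho(\sigma_j)^2$, hence $\rho(\sigma_j)=a$. Thus every $\rho(\sigma_i)=a$, so $\rho$ factors through $B_n^{\mathrm{ab}}\cong\Z$ and is a transvection of the trivial homomorphism; the symmetric case, where the even-indexed images collapse, is identical.

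The remaining case is the substance of the theorem. Here the key step is to invoke the classification of large totally symmetric sets in mapping class groups in the relevant range: since $k\ge 13$ and $g\le 2k-3$, a totally symmetric set of size $k$ in $\PMod(S_{g,p})$ with distinct elements must, after replacing $\rho$ by a transvection, consist of $T_{d_1}^{a},\dots,T_{d_k}^{a}$ for pairwise disjoint simple closed curves $d_i$ and a common nonzero integer $a$. Because all $\sigma_i$ are conjugate in $B_n$, it follows that each $\rho(\sigma_i)=T_{c_i}^{a}$ for some simple closed curve $c_i$, and one checks the curves $c_1,\dots,c_{n-1}$ are pairwise distinct (for same-parity indices from the injectivity of the two totally symmetric sets, for adjacent indices since $\rho(\sigma_i)\ne\rho(\sigma_{i+1})$, and otherwise from a short manipulation of the braid relations). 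The commutation relations for $|i-j|\ge 2$ give $i(c_i,c_j)=0$, and the braid relation $T_{c_i}^{a}T_{c_{i+1}}^{a}T_{c_i}^{a}=T_{c_{i+1}}^{a}T_{c_i}^{a}T_{c_{i+1}}^{a}$, together with the standard fact that such a relation between powers of two distinct Dehn twists forces the common exponent to be $\pm1$ and the curves to meet exactly once, gives $a=\pm1$ and $i(c_i,c_{i+1})=1$. Hence $\{c_1,\dots,c_{n-1}\}$ is a chain and $\rho$ equals $\rho_s$ or $\rho_{-s}$ according to the sign of $a$, up to the transvection removed above.

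The step I expect to be the main obstacle is the structural classification of totally symmetric sets invoked above, \emph{with the sharp numerical hypotheses}. Merely fitting $k$ disjoint supports into $S_{g,p}$ does not, when $g$ is permitted to be as large as $2k-3$, rule out supports carrying pseudo-Anosov components, supports that are not annuli, or twists of higher order; excluding these requires analyzing the canonical reduction systems of a totally symmetric set jointly with the constraints coming from the adjacent braid relations, and it is the optimization of this analysis that produces the threshold $g\le 2k-3$ — beyond which the generalized hyperelliptic representation shows the conclusion genuinely fails. Bookkeeping the various transvections and handling the small-genus and mixed-collapse boundary cases will also require care, but the essential difficulty is this sharp totally-symmetric-set theorem.
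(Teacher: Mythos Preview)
Your overall architecture matches the paper's: reduce to analyzing the totally symmetric image $\rho(X_n)$, dispose of the collapsed case, and in the injective case show that (up to transvection) each $\rho(\sigma_i)$ is a power of a Dehn twist, then finish with McCarthy's braid-relation lemma. Your treatment of the collapsed case is correct and in fact slightly slicker than the paper's (which instead observes that $\sigma_1\sigma_3^{-1}$ normally generates $B_n'$ and then uses local indicability of the point-pushing subgroup).

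The genuine gap is the black box you call ``the classification of large totally symmetric sets in mapping class groups.'' No such off-the-shelf theorem exists in the form you state, and more importantly the paper's argument cannot be phrased as a statement about abstract totally symmetric subsets of $\PMod(S_{g,p})$: it repeatedly uses that the set is $\rho(X_n)$ for a homomorphism out of $B_n$. Concretely, the proof first passes to the closed surface via $\rho_0$, shows every curve in $\CRS(\rho_0(X_n))$ has label of size $1$, $k-1$, or $k$ (a counting argument), and then runs a five-way case split. The periodic and pseudo-Anosov cases are handled via bounds on finite abelian subgroups and centralizers; the ``only type $C$'' and ``type $I$ present'' cases are eliminated by showing $\rho_0(B_n)$ acts on a certain multicurve and then invoking Lin's and Artin's rigidity theorems for homomorphisms $B_n\to\Sigma_m$ to force that action to be cyclic --- this step needs the ambient $B_n$, not just the totally symmetric set. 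The remaining case (type $A$ curves present) still requires substantial work: Euler-characteristic and homology arguments pin the $A$-curves down to a single nonseparating curve per label, and the transvecting element $\tau$ is built by hand from Nielsen realization, again using the even-indexed generators to see that $\tau$ commutes with all of $\rho(B_n)$. So your instinct that ``the essential difficulty is this sharp totally-symmetric-set theorem'' is right, but be aware that its proof is not separable from the $B_n$-structure in the way your outline suggests, and that ``transvection'' only makes sense at the level of the homomorphism $\rho$, not of an abstract subset of $\PMod(S_{g,p})$.
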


\p{Three remarks about Theorem \ref{main1}.}
\begin{itemize}\item
Theorem \ref{main1} is sharp because other homomorphisms appear when $g=n-2$. This can be seen through the degree $d$ \emph{super-hyperelliptic} family 
\[
y^d=(x-x_1)...(x-x_n),
\]
which is an $S_g$-bundle over $\Conf_n(\C)$ for some $g$, where the monodromy is denoted as $\rho_d$. The representation determined by the degree $d$ super-hyperelliptic family has been computed by \cite{McMullen} and \cite{GHASWALA}. By the Hurwitz formula, $g=(d-1)(k-1)$ for $n=2k$. When $d=2$, the representation is exactly $\rho_s$. For $d=3$ and $n=2k$, the genus $g=2(k-1)=2k-2=n-2$. It is not hard to show that $\rho_3$ is not a transvection of $\rho_s, \rho_{-s}$, or the trivial homomorphism.
\item
If we replace $\PMod(S_{g,p})$ with the mapping class group $\Mod(S_{g,p})$ fixing punctures as a set, the result is false. This is because there is an embedding of the braid group $B_n$ into the surface braid group $B_n(S_g)$, induced by an embedding $\mathbb{C}\hookrightarrow S_g$. As $B_n(S_g)$ in turn embeds into $\Mod(S_{g,n})$, the resulting composite homomorphism is a counterexample to our main theorem.
\item
Unlike \cite{Castel}, we use punctures instead of boundary components for the definition of $\PMod(S_{g,p})$ for simplicity. If we replace punctures with boundary components as well, our argument can provide the same results.
\end{itemize}
Aramayona--Souto \cite{AS} has classified homomorphisms $\PMod(X)\to \PMod(Y)$ for two surfaces $X,Y$ such that either $g(Y)<2g(X)-1$, or $g(Y)=2g(X)-1$ when $Y$ is not closed. Our result partially recovers their theorem in a reduced range but with a slight improvement, as our result additionally holds when $Y$ is a closed surface and $g(Y)=2g(X)-1$.
\begin{corollary}\label{maincor}
Let $g\ge 23$ and $h<2g$. Then any homomorphism $\PMod(S_{g,p})\to \PMod(S_{h,q})$ is either trivial or induced by an embedding.
\end{corollary}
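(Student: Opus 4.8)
The plan is to reduce Corollary~\ref{maincor} to Theorem~\ref{main1}: restrict a given homomorphism $\Psi\colon\PMod(S_{g,p})\to\PMod(S_{h,q})$ to a braid subgroup of $\PMod(S_{g,p})$, and then promote the resulting information about a chain of Dehn twists to all of $\PMod(S_{g,p})$. First I would fix a chain $c_1,\dots,c_{2g+1}$ of individually non-separating simple closed curves on $S_{g,p}$ (possible since $g\ge1$); by the braid and chain relations the assignment $\sigma_i\mapsto T_{c_i}$ defines a homomorphism $\phi\colon B_{2g+2}\to\PMod(S_{g,p})$. Since $h<2g$ forces $h\le 2g-1$, taking $n=2g+2=2(g+1)$ gives $k=g+1$ with $g(S_{h,q})=h\le 2g-1=2k-3$; for $g\ge23$ the hypotheses of Theorem~\ref{main1} are met (with plenty of room for this one application --- the precise bound $g\ge 23$ is what keeps \emph{every} braid subgroup used in the propagation step below inside the admissible range). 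Hence $\Psi\circ\phi$ is trivial, $\rho_s$, or $\rho_{-s}$ up to transvection. (If $h<g$ there is no chain of $2g+1$ non-separating curves in $S_{h,q}$, so only the first case occurs.)

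\p{The trivial case.} If $\Psi\circ\phi$ is cyclic then $\Psi(T_{c_1})=\dots=\Psi(T_{c_{2g+1}})=:\psi$. I would propagate this along elementary moves of chains: replacing an endpoint curve $c_{2g+1}$ by a curve $b$ completing a new chain $c_1,\dots,c_{2g},b$, the corresponding restriction of $\Psi$ cannot be $\rho_s$ or $\rho_{-s}$ up to transvection, as those force $\Psi(T_{c_1})\ne\Psi(T_{c_2})$; so it is again cyclic, and $\Psi(T_b)=\psi$. Using connectivity of the relevant complex of chains (this is where $g$ large is needed) one gets $\Psi(T_c)=\psi$ for \emph{every} non-separating $c$. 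Applying the lantern relation to a four-holed sphere embedded in $S_{g,p}$ all seven of whose curves are non-separating, the relation $T_{x_1}T_{x_2}T_{x_3}T_{x_4}=T_{y_1}T_{y_2}T_{y_3}$ maps to $\psi^4=\psi^3$, so $\psi=1$. Since $\PMod(S_{g,p})$ is generated by Dehn twists about non-separating curves (Humphries generators, together with point-pushing maps, which are themselves such products), $\Psi$ is trivial.

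\p{The (negative-)standard case.} Here $\Psi(T_{c_i})=\phi\,T_{d_i}^{\varepsilon}$ for a fixed $\varepsilon\in\{+1,-1\}$, a chain $d_1,\dots,d_{2g+1}$ in $S_{h,q}$, and an element $\phi$ commuting with each $T_{d_i}$. The same propagation --- now using that $\varepsilon$ is locally, hence globally, constant and that the transvecting element agrees on overlapping chains --- yields an assignment $c\mapsto d(c)$ on non-separating curves with $\Psi(T_c)=\phi\,T_{d(c)}^{\varepsilon}$, where $\phi$ commutes with every $T_{d(c)}$ and $c\mapsto d(c)$ preserves the relations $i(\cdot,\cdot)=0$ and $i(\cdot,\cdot)=1$. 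By a rigidity argument in the spirit of \cite{AS} (building the subsurface of $S_{h,q}$ filled by the image curves, or invoking curve-complex rigidity) this assignment is induced by an embedding $\iota\colon S_{g,p}\hookrightarrow S_{h,q}$ --- orientation-preserving if $\varepsilon=+1$, orientation-reversing if $\varepsilon=-1$, the latter still giving a homomorphism into $\PMod(S_{h,q})$ and sending $T_c\mapsto T_{\iota(c)}^{-1}$. Finally $x\mapsto\Psi(x)\,\iota_*(x)^{-1}$ equals the central element $\phi$ on every generator $T_c$, so it factors through $H_1(\PMod(S_{g,p}))$, which vanishes for $g\ge3$; hence $\phi=1$ and $\Psi=\iota_*$. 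The strengthening of \cite{AS} at $h=2g-1$ with $S_{h,q}$ closed is automatic, this being exactly the endpoint case $h=n-3$ covered by Theorem~\ref{main1}.

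\p{Main obstacle.} I expect the crux to be the propagation step --- showing that a single type (trivial, $\rho_s$, or $\rho_{-s}$) is preserved under elementary moves of chains and that such moves reach enough non-separating curves, which is where the hypothesis on $g$ is spent --- together with the rigidity step converting the curve assignment into an honest surface embedding. These amount to a streamlined rerun of the Aramayona--Souto argument layered on top of Theorem~\ref{main1}.
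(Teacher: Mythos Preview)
Your overall strategy---restrict $\Psi$ to the braid subgroup on a maximal chain and invoke Theorem~\ref{main1}---is exactly what the paper does, and your cyclic case is essentially the paper's argument (the paper is slightly more direct: from $\Psi(T_{c_1}T_{c_3}^{-1})=1$ one gets $\Psi(T_cT_d^{-1})=1$ for every pair of non-separating curves with non-separating union just by conjugation in $\PMod(S_{g,p})$, so no chain-complex connectivity is needed before applying the lantern relation).

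The substantive difference is in the standard case. The paper does \emph{not} carry the transvecting element $\tau$ through a propagation-and-rigidity argument. Instead it kills $\tau$ immediately, by comparing the two chain relations
\[
T_d=(T_{c_1}T_{c_2}T_{c_3}T_{c_4})^5 \quad\text{and}\quad T_d=(T_{c_1}T_{c_2}T_{c_3}T_{c_4}T_{c_5'})^6
\]
on the Humphries curves: applying $\Psi$ gives $T_{\bar d}\tau^{20}=T_{\bar d_1}T_{\bar d_2}\tau^{30}$, and a short CRS analysis forces $\tau^{10}=1$; since $\tau$ fixes a pair of pants it is trivial by Claim~\ref{finiteorder}. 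Having established that $\Psi$ sends a non-separating Dehn twist to a Dehn twist with no transvection, the paper then simply cites \cite[Chapters 9--10]{AS} for the passage to ``induced by an embedding''. So the paper's own contribution in Section~11 is precisely the $\tau=1$ step; the rigidity and embedding steps are outsourced.

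Your route---propagate $(\varepsilon,\phi,d(c))$ over elementary chain moves, build an embedding from $c\mapsto d(c)$ by curve-complex rigidity, and kill $\phi$ at the end via $H_1(\PMod(S_{g,p}))=0$---is a legitimate alternative and the $H_1$ trick for eliminating $\phi$ is clean. The point that needs real work, which you flag, is the propagation: you must show that when two length-$(2g+1)$ chains overlap in $2g$ curves, the transvecting elements produced by Theorem~\ref{main1} \emph{coincide} (not merely exist), so that a single global $\phi$ emerges. This is not automatic from Theorem~\ref{main1} alone, since the theorem only determines $\rho$ up to transvection chain-by-chain; you would extract it from $\Psi(T_{c_i}T_{c_j}^{-1})=T_{d_i}^{\varepsilon}T_{d_j}^{-\varepsilon}$ being transvection-free. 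The paper sidesteps this entirely with the chain-relation computation above, which is the main technical novelty of its Section~11.
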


\p{Prior results.} 
There are many results showing that homomorphisms between mapping class groups are induced by inclusions of surfaces; see the work of Ivanov \cite{ivanov}, Irmak \cite{irmak} \cite{irmak2}, Shackleton \cite{shack}, Bell--Margalit \cite{BM}, Castel \cite{Castel}, and Aramayona--Souto \cite{AS}. Aramayona--Leininger--Souto gives examples of injective homomorphisms between mapping class groups of closed surfaces using a covering construction \cite{als}. In Chen--Kordek--Margalit, the authors show that homomorphisms between braid groups are induced by ``cabling" and ``diagonal" constructions, which are two kinds of geometric manipulations of $2$-disks \cite{CKM}. Moreover, Szepietowski \cite{Szepietowski} and B\"{o}digheimer--Tillmann \cite{BT} construct embeddings of braid groups into mapping class groups that do not send the standard generators of the braid group to Dehn twists.

\p{Comparison of methods.}
Similar to the result of Aramayona—Souto \cite{AS}, the authors also try to prove that the image of a twist is a single Dehn twist. To accomplish this task, Aramayona—Souto uses a result of Bridson showing that the image of a power of Dehn twist is a multitwist \cite{bridson}. However, the analog of Bridson's theorem for $B_n$, replacing Dehn twists with half-twists, is not true as $B_n$ has nontrivial abelianization. Instead, we start with the totally symmetric set idea, as discussed in the next remark. Another similarity is that  both \cite{AS} and the current paper use results about homomorphisms from either braid groups or mapping class groups into symmetric groups. One challenge of the current paper is addressing the abelianization of $B_n$, due to which we need to classify homomorphisms up to transvection. Aramayona--Souto does not need to consider this because mapping class groups are perfect.

The argument of Chen--Kordek--Margalit \cite{CKM} does not apply to this problem because it takes advantage of the fact that roots of the central elements of $B_n$, which are analogues of finite order elements in mapping class groups, are classified. However, finite order elements in $\PMod(S_{g,p})$ have not yet been fully classified. The inductive argument in Chen--Kordek--Margalit also fails to apply because we study homomorphisms between two different groups.

\p{Key ingredients of our proof.}
The main tools we use in this paper are canonical reduction systems and \emph{totally symmetric sets}. The notion of totally symmetric sets is useful in studying homomorphisms between groups in general. This idea first appears in \cite{tss} and already has many applications including papers \cite{CKLP}, \cite{KordekCaplinger} and \cite{LiPartin}. We will give an introduction of totally symmetric sets in Section 2. 

Let $\rho:B_n\to \PMod(S_{g,p})$ be a homomorphism. We use totally symmetric sets to constrain the geometry of the canonical reduction system of $\rho(X)$, where $X$ is a totally symmetric subset of $B_n$. Then, Thurston's theory can further constrain the exact image $\rho(X)$.

Another key ingredient of our proof are the rigidity results of Lin \cite{lin} and Artin \cite{artin} concerning homomorphisms from $B_n$ to the symmetric group $\Sigma_m$, which are stated in Section 7. These rigidity results are used in Sections 8, 9, and 10 to determine how $\rho(B_n)$ acts on canonical reduction systems and can help determine $\rho(B_n)$ exactly.

\p{Genus bound on the target mapping class group.}
When the target has higher genus, this type of argument fails catastrophically. As we have remarked after Theorem 1.1, super-hyperelliptic families form one counterexample in the higher genus case. For homomorphisms between mapping class groups, strange covering space constructions arise sending pseudo-Anosov elements to Dehn multitwists; e.g., see \cite{als}. If the target has large genus, the totally symmetric sets of its mapping class group can be very complicated. Another difficulty in the large genus case is that the mapping class group then contains large finite subgroups, which allows more possibilities for classification, further complicating the analysis. However, we still expect that all injective homomorphisms between mapping class groups come from geometric constructions such as covering space constructions.

\p{Outline of the paper.} Section 2 introduces totally symmetric sets and totally symmetric multicurves, the latter of which are then classified in Section 3. Section 4 sketches the proof of Theorem \ref{main1} and breaks the proof into five cases, which are addressed in Sections 5, 6, 8, 9 and 10. Section 7 states the rigidity theorems of Lin and Artin, which will be used in later sections. Lastly, Corollary \ref{maincor} is proven in Section 11. 

\p{Acknowledgments.} We acknowledge the financial support of the National Science Foundation via Grant Nos. DMS - 2005409 and the Caltech SURF program through Prof. Vladimir Markovic's McArthur chair account and the Mark Reinecke SURF Fellowship.  This paper started as a joint paper of Lei Chen with Kevin Kordek and Dan Margalit, and transformed into a SURF program project at Caltech. The authors would like to thank Kevin and Dan for all the helpful conversations regarding this paper. The authors would also like to thank the anonymous referee for giving very useful advice to the current paper.

\section{Totally symmetric sets}
In this section, we introduce the main technical tools of the paper, namely totally symmetric sets and totally symmetric sets.  We note that most of the material in this chapter has already appeared in \cite{tss}, but  we include them for the sake of completeness. After giving some examples, we derive some basic properties of totally symmetric sets and develop their relationship with canonical reduction systems.

\subsection{Totally symmetric subsets of groups}

Let $X$ be an ordered subset of a group $G$.  

\begin{definition} $X$ is a \emph{totally symmetric subset} of $G$ if
\begin{itemize}
\item the elements of $X$ commute pairwise, and
\item every permutation of $X$ can be achieved via conjugation by an element of $G$.
\end{itemize}
\end{definition}

\noindent If $X =\{x_1,\dots,x_m\}$ is a totally symmetric subset of $G$, we can form new totally symmetric subsets from $X$. In particular,
$$ X^k = \{ x_1^k ,\dots, x_m^k \} $$
and 
$$ X' = \{x_1x_2^{-1},x_1x_3^{-1},\dots,x_1x_m^{-1}\} $$
are both totally symmetric sets.

\p{The symmetric set $X_n$.} 
A key example is the set of odd-indexed generators of $B_n$
$$ X_n = \{\sigma_1,\sigma_3,\sigma_5,\dots,\sigma_{\ell}\}, $$
where $\ell$ is the largest odd integer less than $n$.

As above, $X_n'$ is also totally symmetric in $B_n$. For $n\ge 6$, any permutation of the elements of $X_n$ can be achieved by conjugating by an element of the commutator subgroup $B_n'$. For example, given any element $g\in B_n$ that transposes $\sigma_1$ and $\sigma_3$ and fixes every other element of $X_n$ by conjugation, the action of the element $g\sigma_5^k$ is the same as the action of $g$ on $X_n$. Thus we can choose an appropriate $k$ so that $g\sigma_5^k\in B_n'$. 

We conclude the above discussion as the following fact.
\begin{fact}\label{braidsymmetricset}
The set $X_n$ is a totally symmetric subset of $B_n$. Likewise, $X_n'$ is a totally symmetric subset of $B_n'$. In particular, every permutation of $X_n$ can be achieved by elements in $B_n'$.
\end{fact}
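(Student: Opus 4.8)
The plan is to verify the two defining conditions of a totally symmetric set for $X_n = \{\sigma_1, \sigma_3, \dots, \sigma_\ell\}$ and then bootstrap from there to handle $X_n'$ and the refinement about $B_n'$. First I would check that the elements of $X_n$ commute pairwise: since the indices are all odd, any two of them differ by at least $2$, so the far-commutation relation $\sigma_i\sigma_j = \sigma_j\sigma_i$ for $|i-j|\ge 2$ in the standard presentation of $B_n$ does the job immediately. The second condition — realizing every permutation of $X_n$ by conjugation — reduces to realizing transpositions of adjacent elements $\sigma_{2t-1}, \sigma_{2t+1}$, since adjacent transpositions generate the full symmetric group. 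Here I would use the classical fact that in $B_n$ a ``band generator'' or half-twist swapping two adjacent odd strands conjugates $\sigma_{2t-1}$ to $\sigma_{2t+1}$ and vice versa while fixing every other $\sigma_i$ with $i$ far from $2t-1, 2t, 2t+1$; concretely, one exhibits an explicit braid word (a product of $\sigma$'s supported on strands $2t-1$ through $2t+2$) whose conjugation action on $X_n$ is exactly that transposition. Composing these gives all of $\Sigma_{|X_n|}$, so $X_n$ is totally symmetric in $B_n$.

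Next I would upgrade this to the statement about $B_n'$, which is the genuinely new content here and follows the argument already sketched in the text. The point is that the conjugating elements produced above need not lie in the commutator subgroup, but we have freedom to modify them without changing their conjugation action on $X_n$. Specifically, for $n \ge 6$ the set $X_n$ contains at least three elements, so there is some $\sigma_j \in X_n$ that is \emph{not} moved by a given permutation $\pi$; if $g$ realizes $\pi$ by conjugation, then so does $g\sigma_j^k$ for every $k \in \Z$, because $\sigma_j$ commutes with every element of $X_n$ (all indices odd) and conjugation by $\sigma_j^k$ therefore fixes $X_n$ pointwise. Since the abelianization $B_n \to \Z$ sends each $\sigma_i \mapsto 1$, choosing $k = -(\text{total exponent of } g)$ forces $g\sigma_j^k \in B_n'$. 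This proves that every permutation of $X_n$ is induced by an element of $B_n'$.

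Finally, for $X_n'= \{\sigma_1\sigma_3^{-1}, \sigma_1\sigma_5^{-1}, \dots, \sigma_1\sigma_\ell^{-1}\}$ I would invoke the general construction from the preceding paragraph of the excerpt: whenever $X = \{x_1, \dots, x_m\}$ is totally symmetric, so is $X' = \{x_1 x_2^{-1}, \dots, x_1 x_m^{-1}\}$, and moreover the conjugators for $X'$ can be taken among the conjugators for $X$. Applying this with $X = X_n$, and using that the conjugators for $X_n$ were just shown to lie in $B_n'$, yields that $X_n'$ is a totally symmetric subset of $B_n'$. The main obstacle is the first part — pinning down an explicit braid element that transposes two adjacent odd generators by conjugation while fixing the rest — since everything afterward is a formal consequence; but this is a standard and short computation in the braid relations, essentially the statement that the half-twist exchanging two adjacent ``blocks'' acts on the generators by the expected permutation.
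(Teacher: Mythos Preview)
Your approach matches the paper's exactly: realize adjacent transpositions by explicit braids, then adjust a conjugator $g$ to $g\sigma_j^k$ to kill its abelianization, precisely as the paper does with $g\sigma_5^k$. One slip worth flagging: the claim that \emph{every} permutation $\pi$ of $X_n$ has a fixed element $\sigma_j$ is false for derangements, but the hypothesis is not actually needed---since the elements of $X_n$ pairwise commute, conjugation by any $\sigma_j^k$ fixes $X_n$ pointwise regardless of whether $\pi(\sigma_j)=\sigma_j$, so $g\sigma_j^k$ induces the same permutation as $g$; alternatively, just apply the adjustment to each generating transposition (which does have a fixed point once $|X_n|\ge 3$) and then compose inside the subgroup $B_n'$.
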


A basic property of totally symmetric sets is the following lemma, which is proven in \cite[Lemma 2.1]{tss}.
\begin{lemma}
\label{lem:tss}
Let $X$ be a totally symmetric subset of a group $G$ and let $\rho : G \to H$ be a homomorphism of groups.  Then $\rho(X)$ is either a singleton or a totally symmetric set of cardinality $|X|$.
\end{lemma}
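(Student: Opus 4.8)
The plan is to verify directly that the two defining properties of a totally symmetric set are either preserved by $\rho$ or else collapse the image to a point. Write $X = \{x_1, \dots, x_m\}$. First I would observe that since the elements of $X$ commute pairwise in $G$, their images $\rho(x_1), \dots, \rho(x_m)$ commute pairwise in $H$; this is immediate from $\rho$ being a homomorphism. Second, for the symmetry property: given any permutation $\pi$ of $\{1, \dots, m\}$, by hypothesis there is some $g \in G$ with $g x_i g^{-1} = x_{\pi(i)}$ for all $i$, hence $\rho(g)\rho(x_i)\rho(g)^{-1} = \rho(x_{\pi(i)})$, so every permutation of the tuple $(\rho(x_1), \dots, \rho(x_m))$ is realized by conjugation in $H$. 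So the only thing that can go wrong is that the $\rho(x_i)$ are not all distinct.

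The key step is therefore to show the dichotomy: if $\rho(x_i) = \rho(x_j)$ for some $i \neq j$, then in fact $\rho(x_1) = \cdots = \rho(x_m)$, so that $\rho(X)$ is a singleton. I would argue as follows: suppose $\rho(x_i) = \rho(x_j)$ with $i \neq j$, and let $k$ be any other index. Choose a permutation $\pi$ with $\pi(i) = i$ and $\pi(j) = k$ (possible since $m \geq 2$ and we may move $j$ to $k$ while fixing $i$, as long as $k \neq i$; if $m = 2$ there is nothing to prove beyond the $i,j$ case). Pick $g \in G$ realizing $\pi$ by conjugation. Then $\rho(x_i) = \rho(x_{\pi(i)}) = \rho(g)\rho(x_i)\rho(g)^{-1}$ and $\rho(x_k) = \rho(x_{\pi(j)}) = \rho(g)\rho(x_j)\rho(g)^{-1} = \rho(g)\rho(x_i)\rho(g)^{-1} = \rho(x_i)$, using $\rho(x_i) = \rho(x_j)$. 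Since $k$ was arbitrary, all the $\rho(x_k)$ coincide, so $\rho(X)$ is a singleton.

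Combining the two cases: either the $\rho(x_i)$ are pairwise distinct, in which case the pairwise commuting and full-symmetry properties verified above show $\rho(X)$ is a totally symmetric set with $|\rho(X)| = m = |X|$; or two of them agree, in which case $\rho(X)$ is a singleton. I do not anticipate a genuine obstacle here — the argument is short and purely formal — but the one point requiring a little care is the bookkeeping in the dichotomy step, namely choosing the permutation $\pi$ so that it fixes the index $i$ and sends $j$ to the target index $k$; one should handle the small-cardinality edge cases ($m = 1$ trivially, $m = 2$ directly) to make the quantifier over $k$ vacuously or immediately true.
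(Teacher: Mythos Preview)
Your argument is correct and is the standard direct verification: images commute, every permutation of the image tuple is realized by conjugation, and any single coincidence $\rho(x_i)=\rho(x_j)$ propagates to all indices via a permutation fixing $i$ and sending $j$ to $k$. The paper itself does not give a proof of this lemma but simply cites \cite[Lemma~2.1]{tss}, so there is nothing to compare; your write-up is essentially the argument one finds in that reference.
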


\subsection{From totally symmetric sets to totally symmetric multicurves}

The topological version of totally symmetric sets are \emph{totally symmetric multicurves}. 

\begin{definition}
Let $X$ be a set. An \emph{$X$-labeled} multicurve $M$ is a multicurve where each component of $M$ is associated with a non-empty subset of $X$. This subset is called its label.\end{definition}
The labelling can also be viewed as a map from the set of components of $M$ to the power set of $X$.

There is a canonical action of the symmetric group $\Sigma_{|X|}$ on the set of $X$-labeled multicurves, where $\Sigma_{|X|}$ acts on the labels of each component. The mapping class group $\Mod(S)$ also acts on the set of $X$-labeled multicurves by acting on the set of multicurves.

\begin{definition} An $X$-labeled multicurve $M$ on a surface $S$ is \emph{totally symmetric} if for every $\sigma \in \Sigma_{|X|}$, there is an $f_\sigma \in \Mod(S)$ so that $\sigma \cdot M = f_\sigma \cdot M$. \end{definition}

An $X$-labeled multicurve has the \emph{trivial label} if each curve in the multicurve has the entire set $X$ as its label. Note that every multicurve with the trivial label is totally symmetric.

\p{Canonical reduction systems.} Nielsen and Thurston classified elements of $\Mod(S)$ into three types: periodic, reducible, and pseudo-Anosov (see, e.g., \cite[Chapter 13]{FM}). Every element $f$ of $\Mod(S)$ has an associated multicurve $\CRS(f)$, which is called the \emph{canonical reduction system}. Up to taking a power, $f$ is homotopic to either the identity or a pseudo-Anosov map on the complement of $\CRS(f)$. Canonical reduction systems can be considered as the analog of the Jordan decomposition of a matrix for the mapping class group. We list some facts about canonical reduction systems that are used in this paper (for more background, see Birman--Lubotzky--McCarthy \cite{blm}).
\noindent
\begin{lemma}For $f,g \in \Mod(S)$, we have the following.
\begin{itemize}\item[(a)] $\CRS(f) = \emptyset$ if and only if $f$ is periodic or pseudo-Anosov.
\item[(b)] If $f$ and $g$ commute, then for any $c\in \CRS(f)$ and $d\in \CRS(g)$, we have $i(c,d)=0$.
\item[(c)] For any $f$ and $g$ we have $\CRS(gfg^{-1}) = g(\CRS(f))$.
\end{itemize}
\end{lemma}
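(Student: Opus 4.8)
The plan is to treat these as the standard facts they are (see Birman--Lubotzky--McCarthy \cite{blm}), using three ingredients freely: the Nielsen--Thurston trichotomy; the \emph{power-invariance} of the canonical reduction system, $\CRS(f^k)=\CRS(f)$ for $k\neq 0$; and the structure statement recalled above --- after passing to a power, $f$ restricts to the identity or a pseudo-Anosov map on each component of $S\setminus\CRS(f)$ --- together with the standard fact that $\CRS(f)$ is the \emph{minimal} multicurve with this property. I would prove (c) first, since (b) will use it.

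For (c): a representative homeomorphism of $g$ induces a bijection on isotopy classes of multicurves preserving geometric intersection numbers and containment, while $h\mapsto ghg^{-1}$ is an automorphism of $\Mod(S)$; these are compatible, since $(gfg^{-1})(g(R))=g(f(R))$ and cutting along $g(R)$ conjugates the first-return maps of $gfg^{-1}$ to those of $f$ on $S\setminus R$. Hence $R\mapsto g(R)$ is an order-isomorphism from the adequate reduction systems of $f$ onto those of $gfg^{-1}$ and so carries the minimal one to the minimal one; that is, $\CRS(gfg^{-1})=g(\CRS(f))$.

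For (a): if $f$ is periodic then $f^k=\mathrm{id}$ for some $k$, so $\CRS(f)=\CRS(f^k)=\CRS(\mathrm{id})=\emptyset$ by power-invariance; if $f$ is pseudo-Anosov then it fixes no finite orbit of essential simple closed curves (its filling invariant foliations have no closed leaves), hence has no reduction curves and $\CRS(f)=\emptyset$. Conversely, if $\CRS(f)=\emptyset$, the recalled structure statement, applied to the single complementary piece $S$, says precisely that a power of $f$ is the identity or pseudo-Anosov on all of $S$, so $f$ is periodic or pseudo-Anosov.

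For (b), the substantive case: by (c), if $f$ and $g$ commute then $g(\CRS(f))=\CRS(gfg^{-1})=\CRS(f)$, so $\CRS(f)$ is $g$-invariant, and it suffices to prove that \emph{every} multicurve $R$ with $g(R)=R$ satisfies $i(R,\CRS(g))=0$. I would argue by contradiction: if some component $r$ of $R$ meets some $c\in\CRS(g)$ essentially, then after replacing $g$ by a suitable power (harmless, by power-invariance) we may assume $g$ fixes each component of $R$ and of $\CRS(g)$ and each component of $S\setminus\CRS(g)$. With $r$ in minimal position, $r$ enters both pieces $Y,Y'$ of $S\setminus\CRS(g)$ adjacent to $c$, and $g$ preserves the essential arc systems $r\cap Y$ and $r\cap Y'$. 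If a further power of $g$ is pseudo-Anosov on $Y$ or on $Y'$ this is impossible, since a pseudo-Anosov first-return map preserves no essential arc system (otherwise it would preserve a proper essential subsurface). Otherwise a power of $g$ is the identity on both $Y$ and $Y'$, hence near $c$ equals $T_c^{a}$ composed with a map supported off $c$; as it fixes $r$ and $i(r,c)>0$, an annular-cover argument about $c$ (the theory of fractional twisting) forces $a=0$, and then $c$ may be deleted from $\CRS(g)$ without destroying adequacy, contradicting minimality. The only genuinely delicate point in the lemma is this last twist-coefficient step; everything else is bookkeeping with the Nielsen--Thurston decomposition.
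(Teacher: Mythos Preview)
The paper does not prove this lemma at all; it is stated as background with a pointer to Birman--Lubotzky--McCarthy \cite{blm}, so there is no ``paper's own proof'' to compare against. Your proposal therefore goes beyond what the paper does, and the arguments you give are correct.

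A few remarks. Your proofs of (a) and (c) are the standard ones and are fine. For (b), your reduction via (c) to the statement ``every $g$-invariant multicurve $R$ has $i(R,\CRS(g))=0$'' is exactly right. The contradiction argument you outline works, including the case $Y=Y'$ (which you should mention), and the twist-coefficient step is indeed the only place needing care; your annular-cover reasoning is the correct way to pin down $a=0$. One small point: when you say a pseudo-Anosov first-return map preserves no essential arc system, you should also note that the arcs $r\cap Y$ are essential precisely because $r$ is in minimal position with $\CRS(g)$.

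It is worth knowing that in \cite{blm} the canonical reduction system is \emph{defined} as the set of essential reduction classes, i.e.\ curves $c$ fixed by some power of $g$ such that every curve meeting $c$ essentially is \emph{not} fixed by any power of $g$. With that definition, (b) is almost immediate: $g$ permutes $\CRS(f)$ by (c), so a power of $g$ fixes each component, and the defining property of essential reduction classes finishes it. Your argument instead derives this property from the minimality characterization the paper uses, which is a consistent and self-contained choice given how the paper introduces $\CRS$.
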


From any totally symmetric subset $X$ of $\Mod(S)$, we can construct the $X$-labeled multicurve $\CRS(X)$ as follows: The multicurve $\CRS(X)$ is the union of $\CRS(x)$ for all $x\in X$. The label of each component $c \in \CRS(X)$ is $\{ x \in X | c \in \CRS(x) \}\subset X$. We then have the following fact.

\begin{fact}
\label{lem:symcrs}
If $X$ is a totally symmetric subset of $\Mod(S)$ then $\CRS(X)$ is a totally symmetric multicurve.
\end{fact}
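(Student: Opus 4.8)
The plan is to show that the very elements of $\Mod(S)$ witnessing the total symmetry of $X$ also witness the total symmetry of $\CRS(X)$, using the equivariance of the canonical reduction system (part (c) of the lemma above). Write $X = \{x_1,\dots,x_m\}$ and fix $\sigma \in \Sigma_m$. By total symmetry there is a $g \in \Mod(S)$ with $g x_i g^{-1} = x_{\sigma^{-1}(i)}$ for all $i$; the inverse appears only to match the convention by which $\Sigma_m$ acts on labels, and since $\sigma^{-1}$ is a permutation, such a $g$ exists. I claim $f_\sigma := g$ satisfies $g \cdot \CRS(X) = \sigma \cdot \CRS(X)$ as $X$-labeled multicurves.

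First I would check that $g$ preserves the underlying multicurve. By part (c), $g(\CRS(x_i)) = \CRS(g x_i g^{-1}) = \CRS(x_{\sigma^{-1}(i)})$, hence
$$g\big(\CRS(X)\big) = \bigcup_{i=1}^m g\big(\CRS(x_i)\big) = \bigcup_{i=1}^m \CRS(x_{\sigma^{-1}(i)}) = \bigcup_{j=1}^m \CRS(x_j) = \CRS(X),$$
since $\sigma^{-1}$ permutes the indices. As $\sigma$ acts only on labels, $\sigma \cdot \CRS(X)$ has the same underlying multicurve, so it remains only to compare labels.

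Next I would track the labels. For a component $c$ of $\CRS(X)$ let $I(c) = \{\, i : c \in \CRS(x_i)\,\}$, so the label of $c$ in $\CRS(X)$ corresponds to the index set $I(c)$, and the action of $\sigma$ changes it to $\sigma(I(c))$. On the other hand, since $\Mod(S)$ acts on labeled multicurves by carrying each curve's label along, the label of $c$ in $g \cdot \CRS(X)$ is the label that $g^{-1}(c)$ had in $\CRS(X)$, i.e. the index set $\{\, i : g^{-1}(c) \in \CRS(x_i)\,\}$. Now $g^{-1}(c) \in \CRS(x_i)$ iff $c \in g(\CRS(x_i)) = \CRS(x_{\sigma^{-1}(i)})$ iff $\sigma^{-1}(i) \in I(c)$ iff $i \in \sigma(I(c))$. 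Thus the label index set of $c$ in $g \cdot \CRS(X)$ is exactly $\sigma(I(c))$, matching $\sigma \cdot \CRS(X)$. Therefore the two $X$-labeled multicurves coincide, and $\CRS(X)$ is totally symmetric.

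I do not expect a genuine obstacle here: the argument is a direct bookkeeping computation powered by the equivariance $\CRS(gfg^{-1}) = g(\CRS(f))$, and the only point requiring care is keeping the conjugation direction consistent with the chosen convention for the $\Sigma_m$-action on labels — which total symmetry makes costless, since both a permutation and its inverse are realized by conjugation.
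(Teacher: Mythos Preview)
Your proof is correct. The paper states this result as a ``Fact'' without proof, so there is nothing to compare against; your argument is exactly the natural one, using the equivariance $\CRS(gfg^{-1}) = g(\CRS(f))$ from part (c) of the preceding lemma and carefully tracking how labels transform.
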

Fact \ref{lem:symcrs} allows us to determine the image $\rho(X_n)$ through constraining totally symmetric multicurves.    
The totally symmetric multicurves $\CRS(X_n)$ and $\CRS(X_n')$ for $X_n$ and $X_n'$ in $B_n$ are indicated in Figure~\ref{fig:symcrs}. 
\medskip
\begin{figure}[H]
\centering
\begin{subfigure}{.49\textwidth}
  \centering
  \includegraphics[width=0.85\linewidth]{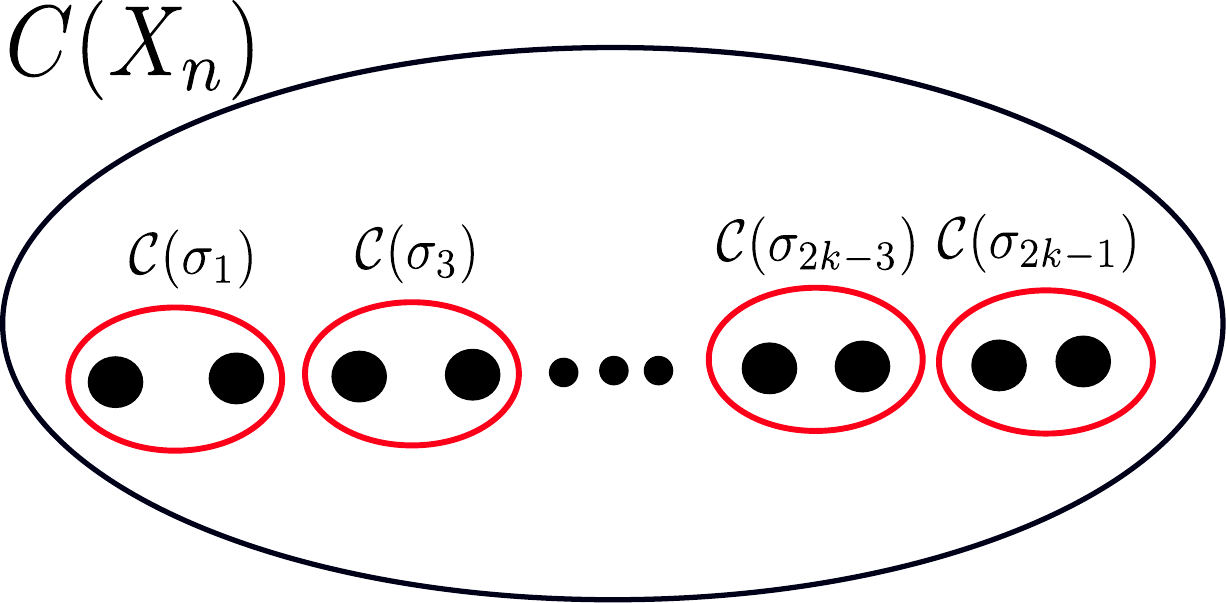}
  \caption{}
\end{subfigure}
\begin{subfigure}{.49\textwidth}
  \centering
  \includegraphics[width=0.85\linewidth]{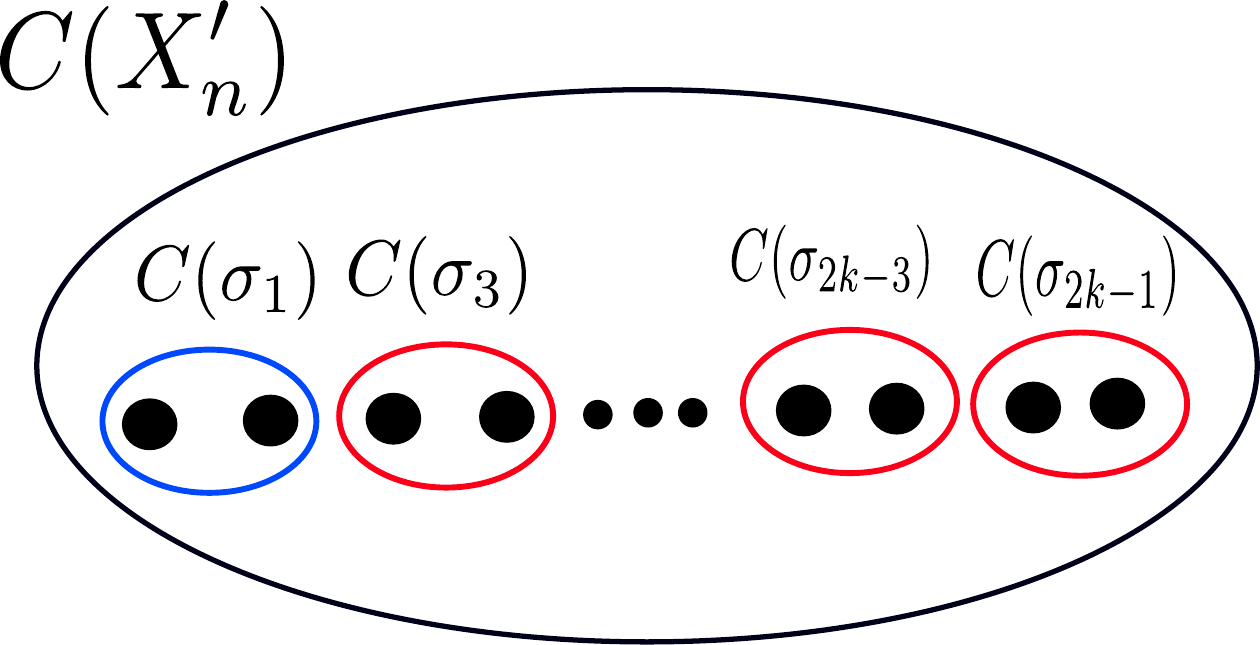}
  \caption{}
\end{subfigure}

\caption{\newline (A) Each element of $X_n$ has exactly one component of $\CRS(X_n)$ as its canonical reduction system, as labelled. \newline (B) For each element $\sigma_1\sigma_i^{-1}$ of $X_n'$, $\CRS(\sigma_1\sigma_i^{-1})=\CRS(\sigma_1) \cup \CRS(\sigma_i)$ for $1<i<n$ odd.}
\label{fig:symcrs}
\end{figure}

\section{Classifying totally symmetric multicurves}

In this section, we classify totally symmetric multicurves on $S_{g,p}$ in order to constrain the totally symmetric sets in $\Mod(S_{g,p})$. Let $M$ be a totally symmetric $X$-labeled multicurve in $S_{g,p}$ for a finite set $X$ with $|X|=k$.  We divide the components of $M$ into three types as follows.  Let $c$ be a component of $M$ and let $l(c)$ be  its label.  We say that $c$ is:
\begin{itemize}
\item of type A if $|l(c)|=1$,
\item of type I if $|l(c)| = k-1$,
\item of type C  if $|l(c)| = k$.
\end{itemize}

The main goal of this section is to prove the following proposition, which says that in some range of $g$ and $k$, every component of a totally symmetric multicurve is of type A, I or C.
\begin{proposition}
\label{prop:abc}
If $M$ is a totally symmetric $X$-labeled multicurve in $S_g$ with $|X|=k$ and $k^2-k > 6g+6$, then every component of $M$ is of type A, I, or C. In particular, this statement holds when $n \geq 26$, $g \leq n-3$, and $k = \floor{n/2}$. 
\end{proposition}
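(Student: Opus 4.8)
The plan is to analyze what total symmetry forces on the components of type other than A, I, or C, and then to get a contradiction by a counting argument on intersection numbers. Suppose $M$ has a component $c$ whose label $l(c)$ has size $j$ with $2 \le j \le k-2$. First I would exploit the $\Sigma_k$-action: since $M$ is totally symmetric, for every permutation $\sigma \in \Sigma_k$ there is $f_\sigma \in \Mod(S_g)$ with $\sigma \cdot M = f_\sigma \cdot M$. Applying this to permutations that move $l(c)$ to various $j$-element subsets of $X$, I get that $M$ contains a component $c_A$ with label $A$ for \emph{every} $j$-element subset $A \subseteq X$; moreover, because $f_\sigma$ is a homeomorphism, all these components $c_A$ are pairwise disjoint and pairwise non-isotopic (distinct labels force non-isotopic curves, as isotopic components of a multicurve are identified and would carry a single label). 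So $M$ contains at least $\binom{k}{j} \ge \binom{k}{2} = \binom{k}{k-2}$ disjoint, pairwise non-isotopic simple closed curves.

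Next I would bound the number of disjoint pairwise non-isotopic simple closed curves on $S_g$. The standard bound is $3g-3$ for essential, non-peripheral curves on a closed genus-$g$ surface (a pants decomposition has $3g-3$ curves), plus we should account for curves bounding disks or isotopic to punctures — but in the multicurve setting components are essential, so the clean bound $3g-3$ applies, or at worst something linear in $g$. Actually, to be safe and to match the hypothesis $k^2 - k > 6g+6$, I would instead count using the type-A components directly: the components of type A are indexed (injectively, by the label) by elements of $X$, giving $k$ disjoint non-isotopic curves; combined with the $\binom{k}{2}$ curves of "type $j$" for the forbidden $j$, and noting these two families are disjoint from each other since their labels have different sizes, the total number of disjoint pairwise non-isotopic curves is at least $k + \binom{k}{2}$. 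Comparing with the maximal possible number $3g - 3$ of such curves and using $\binom{k}{2} = \frac{k^2-k}{2}$ gives
\[
\frac{k^2-k}{2} \le 3g-3 < \frac{6g+6}{2},
\]
contradicting $k^2 - k > 6g+6$. This rules out any $j$ with $2 \le j \le k-2$, so every component is of type A, I, or C.

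The last step is the numerical specialization: with $n \ge 26$, $g \le n-3$, and $k = \lfloor n/2 \rfloor$, I must check $k^2 - k > 6g+6$. Since $k \ge \frac{n-1}{2}$ we have $k^2 - k = k(k-1) \ge \frac{n-1}{2}\cdot\frac{n-3}{2} = \frac{(n-1)(n-3)}{4}$, while $6g+6 \le 6(n-3)+6 = 6n-12$. So it suffices that $\frac{(n-1)(n-3)}{4} > 6n-12$, i.e. $(n-1)(n-3) > 24n - 48$, i.e. $n^2 - 4n + 3 > 24n - 48$, i.e. $n^2 - 28n + 51 > 0$, which holds for $n \ge 26$ (the larger root is below $26$). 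This is a routine check I would carry out explicitly but not belabor.

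The main obstacle I anticipate is the disjointness-and-distinctness bookkeeping in the first two paragraphs: I need to argue carefully that the $\binom{k}{j}$ components with distinct $j$-element labels are genuinely distinct isotopy classes and genuinely disjoint. Disjointness is immediate because they are all components of the single multicurve $M$; distinctness follows because a multicurve's components are, by definition, pairwise non-isotopic, and the labeling is well-defined on isotopy classes, so two components with different labels cannot be isotopic. The slightly delicate point is verifying that applying $f_\sigma$ really produces a component of $M$ with the permuted label still lying inside $M$ — this is exactly the content of the total symmetry hypothesis $\sigma \cdot M = f_\sigma \cdot M$, which says the labeled multicurve is preserved as a labeled object up to the $\Mod(S)$-action, so I would lean on that definition directly. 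Everything else is elementary counting.
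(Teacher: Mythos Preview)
Your core argument is the same as the paper's: total symmetry forces a component with label $A$ for every $j$-element subset $A\subset X$, so $M$ has at least $\binom{k}{j}\ge\binom{k}{2}$ components, which exceeds the maximal size of a multicurve on $S_g$ and yields the contradiction. Two corrections are needed, though.

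First, the digression about type-A components is unjustified and should be deleted: nothing in the hypothesis guarantees that $M$ has any components with singleton label, so you cannot claim there are $k$ such curves. Fortunately you never actually use the extra ``$+k$'' in the displayed inequality, so the argument survives once you remove that paragraph. Second, and more concretely, your numerical verification fails at the boundary $n=26$. The larger root of $n^{2}-28n+51$ is $14+\sqrt{145}\approx 26.04$, not below $26$; indeed at $n=26$ one computes $(n-1)(n-3)=25\cdot 23=575$ while $24n-48=576$, so the needed inequality $(n-1)(n-3)>24n-48$ is false. The loss comes from the estimate $k-1\ge \frac{n-3}{2}$, which is slack when $n$ is even. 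The paper's fix is to note directly that $n\ge 26$ gives $k=\lfloor n/2\rfloor\ge 13$, hence $k-1\ge 12$, and then
\[
k(k-1)\;\ge\;\tfrac{n-1}{2}\cdot 12\;=\;6n-6\;>\;6n-12\;\ge\;6g+6.
\]
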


\begin{proof}

If $k \leq 3$, then the proposition is vacuous. Let $c$ be a component of $M$ with $|l(c)| =\ell$.  Since $M$ is totally symmetric, for each subset $A\subset X$ of cardinality $\ell$, there is a component of $M$ whose label is $A$.  In particular, $|M| \geq {k \choose \ell}$.  However, on $S_g$, the maximal number of disjoint curves are $3g+3$. Therefore, we have that 
\[
{k \choose \ell} \leq 3g+3.
\]
Assume that $c$ is not of type A, I, or C.  This means that $2 \leq \ell \leq k-2$.  Since $k \geq 4$ and $\ell \notin \{0,1,k-1,k\}$, we have that ${k \choose \ell} \geq {k \choose 2}$.  We thus conclude that ${k \choose 2} \leq 3g+3$.  In other words, $k(k-1) \leq 6g+6$, which contradicts the assumption. Thus every component of $M$ is of type A, I, or C.

Note that if  $n \geq 26$ then
\[
\floor{n/2}\left(\floor{n/2}-1\right) \geq \frac{n-1}{2}\left(12 \right) = 6n-6 \ge 6(g+3)-6=6g+12 >6g+6,
\]
which gives us the range as in the assumption of the statement.
\end{proof}


\section{Outline of the proof}
\label{sec:outline}

In this section we present an outline of the remainder of the paper, which is devoted to the proof of Theorem~\ref{main1}.  Given a homomorphism 
\[
\rho : B_n \to \PMod(S_{g,p}),\] let 
\[
\rho_0: B_n\to \Mod(S_g)
\]
be the composition of $\rho$ with the natural projection $\PMod(S_{g,p})\to \Mod(S_g)$. We first work with $\rho_0$ because many statements, such as Proposition \ref{prop:abc}, only work with closed surfaces. We then use our results for $\rho_0$ to study $\rho$.

To classify $\rho_0$, we consider the totally symmetric subset $X_n\subset B_n$. The image of $X_n$ is a totally symmetric set $\rho_0(X_n)$ in $\Mod(S_g)$ with cardinality either 1 or $\floor{n/2}$ (Lemma~\ref{lem:tss}). If $\rho_0(X_n)$ has cardinality $1$, then since $B_n'$ is normally generated by $\sigma_1\sigma_3^{-1}$ \cite[Remark 1.10]{lin}, it follows that $\rho$ is cyclic. 

It thus remains to consider the case where $\rho_0(X_n)$ is a totally symmetric set of cardinality $\floor{n/2}$.   As in Fact \ref{lem:symcrs}, the $X_n$-labeled multicurve $\CRS(\rho_0(X_n))$ is totally symmetric.  By Proposition~\ref{prop:abc}, each curve of $\CRS(\rho_0(X_n))$ is of type A, I, or C. We break the proof into the following five cases.
\begin{enumerate}
\item $\CRS(\rho_0(X_n))$ is empty and $\rho_0(\sigma_1)$ is periodic, 
\item $\CRS(\rho_0(X_n))$ is empty and $\rho_0(\sigma_1)$ is pseudo-Anosov,
\item $\CRS(\rho_0(X_n))$ contains only curves of type $C$,
\item $\CRS(\rho_0(X_n))$ contains curves of type I, and
\item $\CRS(\rho_0(X_n))$ contains curves of type A but no curves of type $I$ .
\end{enumerate}
In the first three cases, we show that $\rho$ is cyclic.  The three cases are detailed in Corollary~\ref{cor:torsion}, Proposition~\ref{prop:pA}, and Proposition~\ref{prop:c}, proved in Sections~\ref{sec:torsion}, \ref{sec:pA}, and~\ref{sec:c}, respectively. In the fourth case, we show that $\CRS(\rho_0(X_n))$ cannot contain curves of type I.  This is the content of Proposition~\ref{prop:b} in Section~\ref{sec:b}.  

After proving the first four cases, we complete the proof of Theorem~\ref{main1} in Section~\ref{sec:pf}.   The goal is to show that there are exactly $\floor{n/2}$ curves of type A, exactly one for each $\rho(\sigma_i)$, and further that each of these curves is a nonseparating curve in $S_g$.  We then use this to conclude that (modulo transvections) each $\rho(\sigma_i)$ is a power of the corresponding Dehn twist.  We finally apply the braid relation to show that the power is $\pm 1$, and the theorem follows from there. 

In Section 11, we end this paper by proving Corollary \ref{maincor}.


\section{Case (1): $\rho_0(\sigma_i)$ is a torsion element}
\label{sec:torsion}

This section addresses Case (1) of Theorem \ref{main1}, as described in Section \ref{sec:outline}, by proving Corollary \ref{cor:torsion}. We start with the following lemma connecting $\rho_0$ and $\rho$, which will be used in the next few sections. Recall that $\rho_0:B_n \to \Mod(S_g)$ is the composition of $\rho:B_n \to \PMod(S_{g,p})$ with the forgetful map $\PMod(S_{g,p}) \to \Mod(S_g)$.

\begin{lemma}
\label{lem:rho0torho}
If $\rho_0$ is cyclic, then $\rho$ is cyclic.
\end{lemma}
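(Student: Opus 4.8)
The plan is to exploit the fact that the kernel of the forgetful map $\PMod(S_{g,p}) \to \Mod(S_g)$ is a well-understood group — namely, an iterated extension built from fundamental groups of punctured surfaces via the Birman exact sequence — and in particular it contains no elements that could "recombine" a cyclic image of $\rho_0$ into a noncyclic image of $\rho$. Concretely, suppose $\rho_0$ is cyclic, so that $\rho_0(B_n') = 1$, i.e.\ $\rho(B_n')$ lies in the kernel $K := \ker(\PMod(S_{g,p}) \to \Mod(S_g))$. Since $B_n'$ is normally generated in $B_n$ by the single element $\sigma_1\sigma_3^{-1}$, the whole subgroup $\rho(B_n')$ is the normal closure in $\rho(B_n)$ of the single element $\rho(\sigma_1\sigma_3^{-1}) \in K$. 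The goal is to show $\rho(B_n') = 1$, which immediately gives that $\rho$ factors through $B_n/B_n' = \Z$, hence is cyclic.

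The key structural input is that $K$ is torsion-free (it is a subgroup of a group built by successive Birman exact sequences from surface groups and free groups, all torsion-free) and, more importantly, has the property that the only element of $K$ that is conjugate in $\PMod(S_{g,p})$ to itself in a "totally symmetric" configuration forces collapse. So the cleaner route is to go back to the totally symmetric set $X_n' = \{\sigma_1\sigma_3^{-1}, \sigma_1\sigma_5^{-1}, \dots\}$, which is totally symmetric in $B_n'$ by Fact~\ref{braidsymmetricset}. Its image $\rho(X_n')$ is a totally symmetric subset of $K$, and by Lemma~\ref{lem:tss} it is either a singleton or totally symmetric of cardinality $|X_n'| = \floor{n/2}-1$. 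I would argue that a totally symmetric subset of $K$ of cardinality $\ge 2$ cannot exist in the relevant range: the elements of $K$ are supported (after passing to canonical reduction systems in the closed surface, or using the point-pushing description) on a surface whose complexity is bounded by $g$, and mutually commuting, permutable such elements would produce a totally symmetric multicurve or point-configuration on $S_g$ violating Proposition~\ref{prop:abc}-style counting — but more simply, since $\rho_0(X_n') $ is already a single point (namely the identity), and the canonical reduction systems of elements of $X_n'$ in $\Mod(S_g)$ are all empty, the point-pushing/puncture-forgetting analysis shows $\rho(X_n')$ must itself be a singleton. Hence $\rho(\sigma_1\sigma_3^{-1})$ equals $\rho(\sigma_1\sigma_5^{-1})$ equals all the others; combined with the total symmetry (all are conjugate) and the fact that they commute pairwise, one deduces each is trivial — e.g.\ because in $B_n'$ the product $(\sigma_1\sigma_3^{-1})(\sigma_3\sigma_5^{-1})\cdots$ telescopes, and the relations among the $\sigma_i\sigma_j^{-1}$ force the common value to be central and of infinite order unless it is $1$, contradicting torsion-freeness only if it is nontrivial central — so it must be $1$.

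Actually, the most economical argument avoids even that: once we know $\rho(\sigma_1 \sigma_3^{-1}) \in K$, we use that $K$ intersects the center of $\PMod(S_{g,p})$ trivially (the center of $\PMod(S_{g,p})$ is trivial for $g \ge 3$, and in any case maps injectively to $\Mod(S_g)$), while $\sigma_1\sigma_3^{-1}$ being a fixed point of the $\Sigma_{\floor{n/2}}$-action up to conjugacy, together with $n$ large, forces $\rho(B_n')$ to be both normally generated by one element of $K$ and to have trivial image in $\Mod(S_g)$; a short induction up the Birman exact sequence filtration of $K$ — at each stage the relevant normal-closure lands in a surface or free group on which $B_n$ (with $n$ large compared to the rank/genus) admits no nontrivial action by the rigidity theorems of Section~\ref{sec:torsion}'s neighbors and Lin's results — collapses it to the identity.

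The main obstacle I anticipate is controlling $\rho(B_n')$ inside $K$ precisely: $K$ is a nonabelian (in fact rather large) group, so ``cyclic $\rho_0$'' does not instantly force ``cyclic $\rho$'' by pure group theory, and one genuinely needs either (i) the torsion-freeness of $K$ plus the normal-generation of $B_n'$ by a single element to pin the image down, or (ii) a totally-symmetric-set argument in $K$ itself. I would expect the cleanest writeup to combine normal generation of $B_n'$ by $\sigma_1\sigma_3^{-1}$ (cited already from \cite{lin}) with the observation that $\rho(\sigma_1\sigma_3^{-1})$ lies in the torsion-free group $K$ while simultaneously, from the braid relations, having constrained powers; the delicate point is ensuring no point-pushing subtlety produces an exotic infinite-order element of $K$ commuting with everything in $\rho(B_n)$ — this is exactly where one must invoke that $p$ punctures contribute only a surface-group's worth of room, insufficient for $n$ in the stated range.
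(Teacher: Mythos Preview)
Your setup is correct: once $\rho_0$ is cyclic, $\rho(B_n')$ lands in the kernel $K=\ker(\PMod(S_{g,p})\to\Mod(S_g))$, which by the Birman exact sequence is the pure surface braid group $PB_p(S_g)$, and the goal is to show this forces $\rho(B_n')=1$. But none of the three mechanisms you sketch is actually carried to a conclusion, and the properties of $K$ you invoke are not the ones that do the work. Torsion-freeness of $K$ is far too weak: there are plenty of nontrivial homomorphisms from infinite groups into torsion-free groups, and nothing in your outline explains why $\rho(\sigma_1\sigma_3^{-1})$ being an infinite-order element of $K$ would be a contradiction. Your totally-symmetric-set approach stalls at exactly the point you flag: knowing $\rho_0(X_n')$ collapses does not by itself collapse $\rho(X_n')$, and you have no replacement for Proposition~\ref{prop:abc} inside $K$. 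The ``induction up the Birman filtration'' idea is closer, but you justify the inductive step by appealing to ``rigidity theorems'' about actions of $B_n$ --- those theorems concern homomorphisms to symmetric groups, not to surface or free groups, and are irrelevant here.

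The paper's argument is a two-line algebraic observation you are missing. First, $B_n'$ is finitely generated and \emph{perfect} for $n\ge 5$ (Gorin--Lin). Second, $PB_p(S_g)$ is \emph{locally indicable}: every nontrivial finitely generated subgroup surjects onto $\Z$ (this follows from the Birman filtration, since each successive quotient is a free or surface group, all of whose nontrivial finitely generated subgroups have infinite abelianization). A finitely generated perfect group cannot map nontrivially into a locally indicable group, since its image would be simultaneously perfect and admit a surjection to $\Z$. Hence $\rho(B_n')=1$ and $\rho$ is cyclic. Your inductive sketch can be repaired along these lines, but only once you replace the appeal to rigidity of $B_n$-actions with the perfectness of $B_n'$.
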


\begin{proof}
When restricted to $B_n'$, the homomorphism $\rho|_{B_n'}: B_n'\to \PMod(S_{g,p})$ has image in the kernel of $\PMod(S_{g,p})\to \Mod(S_g)$. The kernel is the point-pushing subgroup $PB_p(S_g)$ as in \cite[Theorem 9.1]{FM}. It is a classical fact that $PB_p(S_g)$ is locally-indicable; i.e., any nontrivial finitely generated subgroup surjects onto $\mathbb{Z}$. However, $B_n'$ is a finitely generated perfect group for $n\ge 5$ by Gorin--Lin \cite{GorinLin}, which cannot be mapped nontrivially into a locally-indicable group.
\end{proof}

The following proposition bounds the genus of a surface whose mapping class group contains a totally symmetric set of finite order elements.

\begin{proposition}
\label{prop:mcgtorsion}
If $\PMod(S_{g,p})$ contains a totally symmetric set consisting of $k$ periodic elements, then $g \geq 2^{k-3}-1$.
\end{proposition}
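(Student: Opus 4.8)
The plan is to show that a totally symmetric set of $k$ periodic elements in $\PMod(S_{g,p})$ forces the surface to have genus at least $2^{k-3}-1$, by extracting from the totally symmetric structure a finite subgroup whose order grows doubly exponentially in $k$, and then invoking the standard bound on the order of finite subgroups of mapping class groups (the $84(g-1)$-type bound, or more simply that a finite subgroup of $\PMod(S_{g,p})$ acts faithfully on $S_{g,p}$ with quotient orbifold of bounded complexity).

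\medskip\noindent\textbf{Step 1: Build a finite group from the totally symmetric set.} Let $X=\{x_1,\dots,x_k\}$ be the totally symmetric set of periodic elements. The elements commute pairwise, so they generate an abelian group $A=\langle x_1,\dots,x_k\rangle$; since each $x_i$ has finite order and they commute, $A$ is finite. First I would bound $|A|$ from below. Passing to the totally symmetric subset $X'=\{x_1x_2^{-1},\dots,x_1x_k^{-1}\}$ (which is totally symmetric by the discussion in Section 2), and iterating the $X \mapsto X'$ operation, one produces a nested family of totally symmetric sets whose elements are products of the $x_i^{\pm1}$; the point is that the group generated by $X$ must be large because the $2^k$ sign choices in forming subset-products, together with the requirement that \emph{every} permutation of $X$ be realized by conjugation, cannot all collapse. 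More carefully, I would argue: if $|A| < 2^{k-3}$ (say), then there is a nontrivial relation among sufficiently many of the $x_i$, but total symmetry lets us permute the indices in such a relation freely, and combining enough permuted copies yields a contradiction with the elements being independent enough — this combinatorial counting is the heart of the matter and the step I expect to be the main obstacle, since it requires the right inductive bookkeeping of how relations propagate under the symmetric group action. The clean way is probably to show that the subsets $\langle x_i : i \in S\rangle$ for $S \subseteq \{1,\dots,k\}$ are pairwise distinct, or that the natural map $(\Z/m)^k \to A$ (for $m$ a common order) has image of size at least $2^{k-3}$.

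\medskip\noindent\textbf{Step 2: Realize $A$ as a group of symmetries of $S_{g,p}$.} A finite subgroup of $\PMod(S_{g,p})$ is realized by an honest finite group of diffeomorphisms (Nielsen realization, or Kerckhoff), so $A$ acts faithfully on $S_{g,p}$ preserving the punctures. Then I would apply the bound on the order of a finite group acting on a surface of genus $g$ with $p$ marked points: by Riemann–Hurwitz, $|A|$ is at most roughly $84(g-1)$ when $g \geq 2$, and for small $g$ one handles the cases separately (a torus admits only abelian finite actions of bounded order, the sphere even smaller). Combining $|A| \le 84(g-1)$ (or the appropriate low-genus statement) with the lower bound $|A| \ge 2^{k-3}$ from Step 1 gives $g \ge \tfrac{1}{84}2^{k-3}+1$; to get the stated clean bound $g \ge 2^{k-3}-1$ one should instead use a sharper or differently-packaged inequality — e.g. that a finite abelian group acting on $S_g$ has order at most $4g+4$ (the maximal abelian action), which with $|A| \ge 2^{k-3}$ yields $4g+4 \ge 2^{k-3}$, hence $g \ge 2^{k-5}-1$; so in fact the exact exponent in the proposition will come from whichever lower bound on $|A|$ the authors actually prove, and I would reverse-engineer Step 1 to produce a lower bound of $4g+4 \ge$ something, i.e. aim for $|A| \ge 2^{k-1}$ or similar so that the abelian-action bound gives precisely $g \ge 2^{k-3}-1$.

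\medskip\noindent\textbf{Summary of strategy and the hard part.} In short: totally symmetric $\Rightarrow$ the generated subgroup is abelian of size at least doubly-exponential in $k$ (combinatorics of the symmetric-group action on labels/relations); abelian finite subgroups of $\Mod(S_g)$ have order $O(g)$ (Riemann–Hurwitz); combine. The low-genus base cases ($g=0,1$) should be dispatched by noting that $\PMod(S_{0,p})$ and $\PMod(S_{1,p})$ have only very small finite subgroups, so a totally symmetric set of many periodic elements cannot exist there at all, forcing $g$ large. The main obstacle, as noted, is Step 1: quantifying precisely how the existence of \emph{all} permutations as conjugations forces the abelian group $\langle X\rangle$ to be large — this is where I'd expect the argument to require a careful induction, likely by showing the $X \mapsto X'$ operation at least doubles some relevant invariant (rank or a suitable $2$-rank) at each stage until it must terminate, bounding $g$ from below.
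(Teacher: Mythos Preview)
Your approach is essentially the same as the paper's: the paper invokes as a black box the lemma (cited as \cite[Lemma 2.3]{CKLP}) that a totally symmetric set of $k$ finite-order elements generates a finite group of order at least $2^{k-1}$, then combines this with the bound $|A|\le 4g+4$ on finite abelian subgroups of $\PMod(S_{g,p})$ to get $2^{k-1}\le 4g+4$, i.e.\ $g\ge 2^{k-3}-1$. You correctly reverse-engineered both ingredients by the end of your sketch; the ``hard part'' you flagged in Step~1 is exactly what the paper outsources to the cited lemma rather than proving in place.
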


To prove this statement, we use the following result due to Chen, Kordek, and Margalit, which appears in \cite{CKLP}.

\begin{lemma}
\label{lem:torsion}
Let $G$ be a group and let $X \subseteq G$ be a totally symmetric subset with $|X| = k$.  Suppose that each element of $X$ has finite order.  Then the group generated by $X$  is a finite group whose order is greater than or equal to $2^{k-1}$.
\end{lemma}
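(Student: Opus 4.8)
The plan is to establish two things about $H := \langle X\rangle$, where $X = \{x_1,\dots,x_k\}$: that $H$ is finite, and that $|H|\ge 2^{k-1}$. Finiteness is immediate from the definition of a totally symmetric set: the elements of $X$ commute pairwise, so $H$ is abelian, and a finitely generated abelian group generated by torsion elements is finite. The inequality $|H|\ge 2^{k-1}$ is trivial when $k=1$, so I assume $k\ge 2$, in which case the $k$ elements of $X$ are pairwise distinct. The heart of the argument is then a counting trick: for each subset $S\subseteq\{1,\dots,k-1\}$, set $x_S := \prod_{i\in S} x_i\in H$, which is well defined because the $x_i$ commute (with $x_\emptyset = 1$). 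There are $2^{k-1}$ such subsets, so it suffices to prove that $S\mapsto x_S$ is injective; this forces $|H|\ge 2^{k-1}$. (This step uses only that the $x_i$ commute and that certain transpositions in the total symmetry exist; the finite-order hypothesis enters only through finiteness of $H$.)

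To prove injectivity, suppose $x_S = x_T$ with $S\ne T$, both contained in $\{1,\dots,k-1\}$. Cancelling the common factor $x_{S\cap T}$ in the abelian group $H$ reduces this to a relation $x_A = x_B$, where $A = S\setminus T$ and $B = T\setminus S$ are disjoint, not both empty, and both contained in $\{1,\dots,k-1\}$, so in particular the index $k$ belongs to neither. Assume first that $A\ne\emptyset$ and pick $i\in A$; by total symmetry there is $g\in G$ whose conjugation action permutes $X$ by the transposition $(i\ k)$ and fixes every other $x_j$. Conjugating the relation $x_A = x_B$ by $g$ and distributing over the product (again using commutativity) replaces each $x_j$ by $x_{(i\ k)(j)}$, and since $k\notin A\cup B$ this yields $x_{(A\setminus\{i\})\cup\{k\}} = x_B$. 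Combining with $x_A = x_B$ and cancelling $x_{A\setminus\{i\}}$ gives $x_k = x_i$, contradicting the distinctness of the elements of $X$ since $i\le k-1$. The remaining case $A=\emptyset$ (so $B\ne\emptyset$ and $x_B = 1$) is handled identically, choosing $i\in B$. Hence $S\mapsto x_S$ is injective, and $|H|\ge 2^{k-1}$.

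The one delicate point is the bookkeeping in the conjugation step: one must check that the chosen transposition fixes every index occurring in $B$ and in $A\setminus\{i\}$, so that conjugation by $g$ carries the relation exactly to $x_{(A\setminus\{i\})\cup\{k\}} = x_B$, and that the ensuing cancellation is legitimate in $H$. Restricting $S$ and $T$ to $\{1,\dots,k-1\}$ is precisely what keeps the index $k$ available as a ``free'' coordinate for the swap, and one should spell out the small edge cases where $A$ or $B$ is empty or a singleton. Beyond that, I anticipate no obstacle; notably, the argument never requires any analysis of the isomorphism type of $H$.
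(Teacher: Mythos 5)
Your proof is correct. Note that the paper does not actually prove this lemma: its ``proof'' is the single line ``See \cite[Lemma 2.3]{CKLP}'', so you have supplied a complete self-contained argument for a statement the paper outsources. Your two steps both check out: finiteness of $H=\langle X\rangle$ is immediate since the $x_i$ commute pairwise and have finite order, so every element of $H$ is of the form $x_1^{a_1}\cdots x_k^{a_k}$ with bounded exponents; and the counting step is sound. The key bookkeeping point you flag is indeed handled correctly: by restricting $S,T\subseteq\{1,\dots,k-1\}$ you guarantee $k\notin A\cup B$ for $A=S\setminus T$, $B=T\setminus S$, so the transposition of $x_i$ and $x_k$ (available by the definition of total symmetry) fixes every generator appearing in $B$ and in $A\setminus\{i\}$, conjugation carries $x_A=x_B$ to $x_{(A\setminus\{i\})\cup\{k\}}=x_B$, and cancellation in the group gives $x_k=x_i$, contradicting $|X|=k$; the empty-$A$ case is the same with $i\in B$. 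This reserved-index device is exactly why the bound is $2^{k-1}$ rather than $2^k$ (the single relation $x_1\cdots x_k=1$ cannot be excluded), and your observation that the finite-order hypothesis enters only through finiteness of $H$ is accurate. The argument is essentially the standard one behind the cited result, so there is nothing further to reconcile with the paper.
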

\begin{proof}
See \cite[Lemma 2.3]{CKLP}.
\end{proof}

We can now proceed with the proof of Proposition \ref{prop:mcgtorsion}.

\begin{proof}[Proof of Proposition~\ref{prop:mcgtorsion}]

Let $X$ be a totally symmetric subset of $\PMod(S_{g,p})$ with $|X|=k$.  By Lemma~\ref{lem:torsion}, the group $\PMod(S_{g,p})$ contains an abelian subgroup of order at least $2^{k-1}$.  On the other hand, the order of a finite abelian subgroup of $\PMod(S_{g,p})$ is bounded above by $4g+4$ \cite{ab1,ab2}.  We thus have $2^{k-1} \leq 4g+4$, and the proposition follows.
\end{proof}

We now prove the main result of this section.

\begin{corollary}
\label{cor:torsion}
Suppose that $g < 2^{k-3}-1$, where $k = \floor{n/2}$. Let $\rho: B_n \to \PMod(S_{g,p})$ be a homomorphism and $\rho_0:B_n \to \Mod(S_g)$ be the composition of $\rho$ with the forgetful map $\PMod(S_{g,p}) \to \Mod(S_g)$. If $\rho_0(\sigma_1)$ is periodic, then $\rho$ is cyclic. In particular, $n \geq 14$ and $g \leq n-3$ satisfies the assumptions on $n$ and $g$.
\end{corollary}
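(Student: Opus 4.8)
The plan is to test $\rho_0$ against the canonical totally symmetric set $X_n = \{\sigma_1,\sigma_3,\dots\} \subset B_n$, which has cardinality $k = \floor{n/2}$ by Fact~\ref{braidsymmetricset}. By Lemma~\ref{lem:tss}, the image $\rho_0(X_n) \subseteq \Mod(S_g)$ is either a singleton or a totally symmetric set of cardinality $k$, and I would treat these two possibilities separately. (Note that when the statement is not vacuous we have $k \geq 4$, so $\sigma_3 \in X_n$.)

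Suppose first that $\rho_0(X_n)$ is a totally symmetric set of cardinality $k$. All standard generators $\sigma_i$ are conjugate in $B_n$, so every element of $\rho_0(X_n)$ is conjugate to $\rho_0(\sigma_1)$ and hence periodic by hypothesis. Thus $\Mod(S_g) = \PMod(S_{g,0})$ contains a totally symmetric set of $k$ periodic elements, so Proposition~\ref{prop:mcgtorsion} (applied with $p=0$) yields $g \ge 2^{k-3}-1$, contradicting the standing hypothesis $g < 2^{k-3}-1$. Hence this case cannot occur.

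Therefore $\rho_0(X_n)$ is a singleton; in particular $\rho_0(\sigma_1) = \rho_0(\sigma_3)$, so $\sigma_1\sigma_3^{-1} \in \ker \rho_0$. Since $\ker\rho_0$ is normal and $B_n'$ is the normal closure of $\sigma_1\sigma_3^{-1}$ by \cite[Remark~1.10]{lin}, we get $B_n' \subseteq \ker\rho_0$; as $B_n/B_n' \cong \mathbb{Z}$, the image of $\rho_0$ is cyclic. Lemma~\ref{lem:rho0torho} then promotes this to the conclusion that $\rho$ itself is cyclic. For the final ``in particular'' clause, $n \ge 14$ forces $k = \floor{n/2} \ge 7$, and then $g \le n-3 < 2^{k-3}-1$ follows from a short computation splitting into the cases $n=2k$ (where it reduces to $k-1 < 2^{k-4}$) and $n = 2k+1$ (where it reduces to $2k-1 < 2^{k-3}$), both of which hold for all $k \ge 7$.

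I do not expect a genuinely difficult step here: the technical content has been absorbed into Proposition~\ref{prop:mcgtorsion} and Lemma~\ref{lem:rho0torho}. The two points demanding a little care are that Proposition~\ref{prop:mcgtorsion} applies verbatim to the closed surface $S_g$—valid since $\Mod(S_g) = \PMod(S_{g,0})$ and the order of a finite abelian subgroup of $\Mod(S_g)$ is still at most $4g+4$—and that the desired conclusion is cyclicity of $\rho$, not merely of $\rho_0$, which is precisely the role of Lemma~\ref{lem:rho0torho}.
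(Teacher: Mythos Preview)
Your proof is correct and follows essentially the same approach as the paper: apply Lemma~\ref{lem:tss} to $\rho_0(X_n)$, rule out the non-singleton case via Proposition~\ref{prop:mcgtorsion}, handle the singleton case using that $\sigma_1\sigma_3^{-1}$ normally generates $B_n'$, and then upgrade from $\rho_0$ to $\rho$ via Lemma~\ref{lem:rho0torho}. The only cosmetic difference is that you treat the two cases in the opposite order and spell out the final numerical check in slightly more detail.
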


\begin{proof}

Since $\rho_0(\sigma_1)$ is periodic, it follows that $\rho_0(X_n)$ is a totally symmetric subset of $\Mod(S_{g,p})$ consisting of periodic elements.  According to Lemma \ref{lem:tss}, the set $\rho_0(X_n)$ is either a singleton or contains exactly $\floor{n/2}$ elements.  We treat the two cases in turn.

If $\rho_0(X_n)$ is a singleton, then $\rho_0(\sigma_1)=\rho_0(\sigma_3)$, so $\rho_0(\sigma_1\sigma_3^{-1}) = 1$, and all conjugates of $\sigma_1\sigma_3^{-1}$ lie in the kernel of $\rho_0$. As $\sigma_1\sigma_3^{-1}$ normally generates the commutator subgroup $B_n'$, then $B_n' \subseteq \ker \rho_0$, so $\rho_0$ is cyclic. By Lemma \ref{lem:rho0torho}, we know that $\rho$ is cyclic, as desired.

If $\rho_0(X_n)$ is not a singleton, then it has $\floor{n/2}$ elements, so the assumption on $g$ and $n$ contradicts Proposition \ref{prop:mcgtorsion}.

Lastly, when $n\geq 14$, we verify that $g \leq n-3 < 2^{\floor{n/2} - 3} - 1$, as desired.
\end{proof}

The next corollary is proved similarly and will be used in later sections.

\begin{corollary}
\label{cor:torsion'}
Suppose that $g < 2^{k-3}-1$, where $k = \floor{n/2}-1$. Let $\gamma: B_n \to \PMod(S_{g,p})$ be a homomorphism and $\gamma_0:B_n \to \Mod(S_g)$ be the composition of $\gamma$ with the forgetful map $\PMod(S_{g,p}) \to \Mod(S_g)$. If $\gamma_0(\sigma_1\sigma_3^{-1})$ is periodic, then $\gamma$ is cyclic. In particular, $n \geq 16$ and $g \leq n-3$ satisfies the assumptions on $n$ and $g$.
\end{corollary}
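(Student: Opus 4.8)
The plan is to mimic the proof of Corollary~\ref{cor:torsion}, with the totally symmetric set $X_n$ replaced by the derived totally symmetric set $X_n'$. Recall from Section~2 that $X_n' = \{\sigma_1\sigma_3^{-1}, \sigma_1\sigma_5^{-1}, \dots, \sigma_1\sigma_\ell^{-1}\}$ is a totally symmetric subset of $B_n'$ of cardinality $\floor{n/2}-1 = k$. First I would apply Lemma~\ref{lem:tss} to $\gamma_0$ restricted to this set: $\gamma_0(X_n')$ is either a singleton or a totally symmetric subset of $\Mod(S_g)$ of cardinality $k = \floor{n/2}-1$. Since $\gamma_0(\sigma_1\sigma_3^{-1})$ is periodic and all elements of $X_n'$ are conjugate (by Fact~\ref{braidsymmetricset}, in fact conjugate by elements of $B_n'$), every element of $\gamma_0(X_n')$ is periodic, so $\gamma_0(X_n')$ is a totally symmetric set of periodic elements.

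Next I would split into the two cases. If $\gamma_0(X_n')$ is a singleton, then $\gamma_0(\sigma_1\sigma_3^{-1}) = \gamma_0(\sigma_1\sigma_5^{-1})$, hence $\gamma_0(\sigma_3\sigma_5^{-1}) = 1$; but $\sigma_3\sigma_5^{-1}$ is conjugate to $\sigma_1\sigma_3^{-1}$, which normally generates $B_n'$ (by \cite[Remark 1.10]{lin}), so $B_n' \subseteq \ker\gamma_0$ and $\gamma_0$ is cyclic. Actually one must be slightly careful: if $\gamma_0(\sigma_1\sigma_3^{-1})$ itself is trivial there is nothing to do, and if $\gamma_0(X_n')$ is a singleton then since $\sigma_1\sigma_3^{-1}$ and $\sigma_3\sigma_5^{-1}$ are conjugate in $B_n'$ and $\gamma_0(\sigma_3\sigma_5^{-1})=1$, we get $\gamma_0(\sigma_1\sigma_3^{-1})=1$ as well, and then $B_n'\subseteq\ker\gamma_0$. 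Either way $\gamma_0$ is cyclic, and then $\gamma$ is cyclic by Lemma~\ref{lem:rho0torho}. If instead $\gamma_0(X_n')$ is not a singleton, it is a totally symmetric set of $k = \floor{n/2}-1$ periodic elements in $\Mod(S_g) \subseteq \PMod(S_g)$ (here using $p=0$, or noting the argument of Proposition~\ref{prop:mcgtorsion} applies to $\Mod(S_g)$ directly), so Proposition~\ref{prop:mcgtorsion} forces $g \geq 2^{k-3}-1$, contradicting the hypothesis $g < 2^{k-3}-1$. Hence this case cannot occur.

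Finally I would check the numerical claim: for $n \geq 16$ we have $k = \floor{n/2}-1 \geq 7$, so $2^{k-3}-1 \geq 2^{4}-1 = 15 \geq n - 3 $ is the wrong direction — one should instead verify $2^{\floor{n/2}-4}-1 > n-3$ for $n \geq 16$, which holds since the left side grows exponentially while $n-3$ grows linearly (at $n=16$: $2^4 - 1 = 15 \geq 13$, and the gap only widens). This confirms that $n \geq 16$, $g \leq n-3$ lies in the allowed range.

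The main obstacle is essentially bookkeeping rather than a genuine difficulty: one must make sure that Proposition~\ref{prop:mcgtorsion} (stated for $\PMod(S_{g,p})$) applies to the totally symmetric set living in $\Mod(S_g)$, which is immediate since $\Mod(S_g) = \PMod(S_{g,0})$, and that the singleton case genuinely collapses $\gamma_0$ to a cyclic homomorphism via the normal generation of $B_n'$ by $\sigma_1\sigma_3^{-1}$. The only subtlety worth stating carefully is the chain of conjugacies in $B_n'$ ensuring $\gamma_0(\sigma_1\sigma_3^{-1})$ is itself periodic and, in the singleton case, trivial.
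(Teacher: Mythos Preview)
Your proposal is correct and follows essentially the same approach as the paper: apply Lemma~\ref{lem:tss} to the totally symmetric set $X_n'$, dispose of the non-singleton case via Proposition~\ref{prop:mcgtorsion}, and in the singleton case deduce $\gamma_0(\sigma_3\sigma_5^{-1})=1$, hence $\gamma_0(\sigma_1\sigma_3^{-1})=1$ by conjugacy, hence $\gamma_0$ cyclic, hence $\gamma$ cyclic by Lemma~\ref{lem:rho0torho}. The only difference is cosmetic: you spell out a few points (periodicity of all elements of $\gamma_0(X_n')$, the identification $\Mod(S_g)=\PMod(S_{g,0})$) that the paper leaves implicit, and your numerical check is a bit meandering but lands in the right place.
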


\begin{proof}
Again, by Lemma \ref{lem:tss}, $\gamma_0(X_n')$ is a singleton or consists of $\floor{n/2} - 1$ elements. If $\gamma_0(X_n')$ is a singleton, then as $\gamma_0(\sigma_1\sigma_3^{-1}) = \gamma_0(\sigma_1\sigma_5^{-1})$, we know \[
\gamma_0(\sigma_3\sigma_5^{-1}) = \gamma_0(\sigma_1\sigma_3^{-1})^{-1} \gamma_0(\sigma_1\sigma_5^{-1}) = 1.\] As $\sigma_3\sigma_5^{-1}$ and $\sigma_1\sigma_3^{-1}$ are conjugate, then $\gamma_0(\sigma_1\sigma_3^{-1}) = 1$, so $\gamma_0(B_n') = 1$ implying $\gamma_0$ is cyclic. By Lemma \ref{lem:rho0torho}, $\gamma$ is cyclic.

In the second case, $\gamma_0$ is trivial by Proposition \ref{prop:mcgtorsion}, as in Corollary $\ref{cor:torsion}$. Again, by Lemma \ref{lem:rho0torho}, $\gamma$ is cyclic.
\end{proof}

\section{Case (2): $\rho_0(\sigma_i)$ is pseudo-Anosov}
\label{sec:pA}

The goal of this section is to prove the following proposition, which completely addresses Case (2) of the proof of Theorem~\ref{main1}, as outlined in Section~\ref{sec:outline}.

\begin{proposition}
\label{prop:pA}
Let $n \geq 16$ and  $g \leq n-3$. Let $\rho: B_n\rightarrow \PMod(S_{g,p})$.  If $\rho_0(\sigma_1)$ is pseudo-Anosov, then $\rho$ is cyclic.
\end{proposition}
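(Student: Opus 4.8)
The plan is to argue that if $\rho_0(\sigma_1)$ is pseudo-Anosov, then $\rho_0(X_n)$ must collapse to a singleton, which forces $\rho$ to be cyclic by the normal generation of $B_n'$ and Lemma~\ref{lem:rho0torho}. First I would record that $\rho_0(\sigma_1)$ pseudo-Anosov implies every element of $X_n$ maps to a pseudo-Anosov element, since the elements of $X_n$ are all conjugate in $B_n$ and being pseudo-Anosov is a conjugacy invariant. By Lemma~\ref{lem:tss}, $\rho_0(X_n)$ is either a singleton or a totally symmetric set of cardinality $\floor{n/2}$; in the first case we are immediately done as in Corollary~\ref{cor:torsion}, so assume for contradiction that it has cardinality $k = \floor{n/2}$.

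Now the elements of $\rho_0(X_n)$ pairwise commute and are all pseudo-Anosov on $S_g$. The key structural fact is that a pseudo-Anosov on a surface $S$ has a well-defined invariant pair of measured foliations, and its centralizer in $\Mod(S)$ is virtually cyclic (see, e.g., \cite[Chapter 13]{FM}); more precisely, if two pseudo-Anosov elements commute, they share the same pair of invariant foliations (equivalently the same axis in Teichm\"uller space) and hence lie in a common virtually cyclic subgroup. So I would show: a commuting set of pseudo-Anosov elements of $\Mod(S_g)$ lies in a virtually cyclic subgroup, and in particular in an abelian-by-finite group whose torsion-free abelian part has rank $1$. A totally symmetric subset of such a group of cardinality $k \geq 4$ is impossible — every permutation of the set must be realized by conjugation, but a virtually cyclic group has very limited conjugation action, and one can bound the size of a totally symmetric set there (indeed, in a group where the relevant quotient acting by conjugation is finite of controlled order, $k$ is forced to be small; for $k \ge 4$ one gets a contradiction with the stated hypothesis $n\ge 16$). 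This contradiction shows $\rho_0(X_n)$ cannot have cardinality $\floor{n/2}$.

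Having ruled out the cardinality-$\floor{n/2}$ case, $\rho_0(X_n)$ is a singleton, so $\rho_0(\sigma_1) = \rho_0(\sigma_3)$, hence $\sigma_1\sigma_3^{-1} \in \ker\rho_0$. Since $\sigma_1\sigma_3^{-1}$ normally generates $B_n'$ (by \cite[Remark 1.10]{lin}), we get $B_n' \subseteq \ker\rho_0$, so $\rho_0$ is cyclic, and Lemma~\ref{lem:rho0torho} upgrades this to $\rho$ being cyclic, completing the proof.

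I expect the main obstacle to be making precise the claim that a commuting family of pseudo-Anosov elements generates a virtually cyclic (or at least rank-one abelian-by-finite) subgroup and that such a group cannot contain a large totally symmetric set. The cleanest route is: commuting pseudo-Anosovs have the same invariant foliation pair, so they all preserve the same Teichm\"uller geodesic axis; the stabilizer of an oriented axis is a subgroup of $\mathbb{R}$ (via translation length) crossed with a finite group of foliation symmetries, which is abelian-by-finite of the stated form; then one invokes a counting bound on totally symmetric sets in virtually cyclic groups (a totally symmetric set of size $k$ requires the symmetric group $\Sigma_k$ to act, forcing enough distinct conjugators, which a virtually cyclic group cannot supply once $k \ge 4$). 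Alternatively, one could mimic the torsion argument of Section~\ref{sec:torsion} by passing to $X_n'$ and using that the differences $\sigma_1\sigma_i^{-1}$ behave well, but the foliation-centralizer approach seems most direct.
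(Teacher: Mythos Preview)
Your approach diverges from the paper's, and the central step has a real gap. You assert that ``a virtually cyclic group cannot supply [enough conjugators for a totally symmetric set] once $k \ge 4$,'' but this is false as stated: every finite group is virtually cyclic, and more to the point, $\mathbb{Z}\times \Sigma_m$ is an infinite virtually cyclic group containing a totally symmetric set of $\lfloor m/2\rfloor$ pairwise commuting infinite-order elements (take $(1,(1\;2)),(1,(3\;4)),\dots$). So the size of a totally symmetric set in a virtually cyclic group $V$ is governed by the finite quotient $V/Z$, not by virtual cyclicity alone. To rescue your argument you would need a bound on $|V/Z|$ for the stabilizer of a Teichm\"uller axis in $\Mod(S_g)$ and then check this bound beats $k=\lfloor n/2\rfloor$ (e.g.\ via Lemma~\ref{lem:torsion}) in the range $n\ge 16$, $g\le n-3$; you do neither. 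A smaller omission: you never explicitly say why the conjugating elements from $\Mod(S_g)$ must lie in the axis stabilizer (they do, because conjugating $x_i$ to $x_{\sigma(i)}$ carries the axis of $x_i$ to that of $x_{\sigma(i)}$, and both equal the common axis), though this is easy to fix.

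The paper avoids all of this by working with just two elements. From McCarthy's virtually cyclic centralizer one gets $\rho_0(\sigma_1)^{k_1}=\rho_0(\sigma_3)^{k_2}$; swapping $\sigma_1\leftrightarrow\sigma_3$ by conjugation gives $\rho_0(\sigma_3)^{k_1}=\rho_0(\sigma_1)^{k_2}$; multiplying yields $\rho_0(\sigma_1)^{k_1+k_2}=\rho_0(\sigma_3)^{k_1+k_2}$. If $k_1+k_2\neq 0$ then $\rho_0(\sigma_1\sigma_3^{-1})$ is periodic and Corollary~\ref{cor:torsion'} finishes. If $k_1+k_2=0$ then $\rho_0(\sigma_1\sigma_3)$ is periodic, hence so is $\rho_0(\sigma_3\sigma_5)$, and their quotient $\rho_0(\sigma_1\sigma_5^{-1})$ is periodic; conjugating back gives $\rho_0(\sigma_1\sigma_3^{-1})$ periodic, and again Corollary~\ref{cor:torsion'} applies. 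This is exactly the ``pass to $X_n'$'' alternative you mention at the end, and it sidesteps any need to bound the finite part of the axis stabilizer.
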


The key fact in the proof, that the centralizer of a pseudo-Anosov mapping class is virtually cyclic, is due to McCarthy \cite{mccarthy}.

\begin{proof}[Proof of Proposition~\ref{prop:pA}]

As $\sigma_1$ and $\sigma_3$ commute, then $\rho_0(\sigma_3)$ is in the centralizer of $\rho_0(\sigma_1)$.

As the centralizer of a pseudo-Anosov element is virtually cyclic and the pseudo-Anosov element generates a cyclic subgroup in its centralizer, $\rho_0(\sigma_1)^{k_1} = \rho_0(\sigma_3)^{k_2}$ for two integers $k_1,k_2$ where $k_2\neq 0$. By the conjugation of pairs $(\sigma_1,\sigma_3)$ and $(\sigma_3,\sigma_1)$, we also obtain $\rho_0(\sigma_3)^{k_1} = \rho_0(\sigma_1)^{k_2}$. Thus, we know that $\rho_0(\sigma_3)^{k_1+k_2} = \rho_0(\sigma_1)^{k_2+k_1}$ by multiplying the two equations $\rho_0(\sigma_1)^{k_1} = \rho_0(\sigma_3)^{k_2}$ and $\rho_0(\sigma_3)^{k_1} = \rho_0(\sigma_1)^{k_2}$. 

If $k_1+k_2\neq 0$, then we know that $\rho_0(\sigma_1\sigma_3^{-1})$ is periodic, which implies that $\rho$ is cyclic by by Corollary \ref{cor:torsion'}. If $k_1+k_2=0$, then we know that 
 $\rho_0(\sigma_1\sigma_3)$ is periodic, then by conjugation, we know that  $\rho_0(\sigma_3\sigma_5)$ is periodic. This implies that $\rho_0(\sigma_1\sigma_5^{-1})$ is periodic by multiplying $\rho_0(\sigma_1\sigma_3)$ with the inverse of $\rho_0(\sigma_3\sigma_5)$. Therefore $\rho_0(\sigma_1\sigma_3^{-1})$ is periodic by conjugation, which implies that $\rho$ is cyclic by by Corollary \ref{cor:torsion'}

\end{proof}


\section{Lin's dichotomies and Artin's theorem}

In this section, we introduce results of Lin and Artin about homomorphisms from $B_n$ to the symmetric group $\Sigma_m$, which will be used to classify actions of $B_n$ on multicurves.  

\begin{theorem}[Theorem A of Lin \cite{lin}] 
\label{lin1}
For $6<n<m<2n$, every transitive homomorphism $B_n\to \Sigma_m$ is cyclic.\end{theorem}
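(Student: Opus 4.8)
The plan is to argue by contradiction: suppose $\phi\colon B_n\to\Sigma_m$ is transitive with $6<n<m<2n$ but is not cyclic. The first step isolates the obstruction via the abelianization. Since $B_n^{\mathrm{ab}}\cong\mathbb Z$, the image $G:=\phi(B_n)$ has cyclic abelianization, so $N:=\phi(B_n')$ is normal in $G$ with $G/N$ cyclic, generated by the image of $\sigma_1$; in particular, since conjugate elements of an abelian group coincide, ``$\phi$ cyclic'' is the same as ``$G$ abelian,'' which is the same as ``$N=1$.'' By Gorin--Lin, $B_n'$ is a finitely generated perfect group for $n\ge 5$, and by \cite[Remark 1.10]{lin} it is normally generated in $B_n$ by $\sigma_1\sigma_3^{-1}$. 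Hence $N$ is a nontrivial perfect subgroup of $G$ that is normally generated in $G$ by the single element $\phi(\sigma_1\sigma_3^{-1})$, and the whole theorem reduces to showing $\phi(\sigma_1\sigma_3^{-1})=1$.

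The second step is an induction on $n$, the base cases near $n=7$ being a direct check. Restrict $\phi$ to the parabolic $\langle\sigma_1,\dots,\sigma_{n-2}\rangle\cong B_{n-1}$ and decompose $\{1,\dots,m\}$ into its orbits, of sizes $\ell_1,\dots,\ell_t$ with $\sum\ell_i=m<2n$. For an orbit of size $\ell$: if $\ell\le n-1$, Artin's theorem classifies the orbit action (cyclic, or, when $\ell=n-1$, the standard permutation representation, which is legitimate since $n-1>5$); if $n-1<\ell<2n-2$, the orbit action is cyclic by the induction hypothesis; and since $\sum\ell_i<2n$, the only way to avoid these cases is a single orbit of size $2n-2$ or $2n-1$ (possibly together with points fixed by the parabolic but moved by $\sigma_{n-1}$), which I would dispatch separately using the classification of transitive actions of the free-abelian parabolic generated by $X_n$ together with the constraint that all $\phi(\sigma_i)$ are conjugate, hence of equal cycle type. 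In the remaining ``main'' case, $\{1,\dots,m\}$ splits as one standard $B_{n-1}$-orbit of size $n-1$ (on which $\phi(\sigma_1)$ is a single transposition) plus orbits on which $B_{n-1}$ acts cyclically. Now introduce the overlapping copy $\langle\sigma_2,\dots,\sigma_{n-1}\rangle\cong B_{n-1}$ and use the boundary braid relations $\phi(\sigma_1)\phi(\sigma_2)\phi(\sigma_1)=\phi(\sigma_2)\phi(\sigma_1)\phi(\sigma_2)$ and $\phi(\sigma_{n-2})\phi(\sigma_{n-1})\phi(\sigma_{n-2})=\phi(\sigma_{n-1})\phi(\sigma_{n-2})\phi(\sigma_{n-1})$ to glue the two standard orbits together; this forces every $\phi(\sigma_i)$ to be a single transposition and the full $B_n$-action to be the standard representation on $n$ points, giving $m=n$ and contradicting $m>n$. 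Therefore $N=1$ and $\phi$ is cyclic.

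I expect the main obstacle to lie entirely in the gluing argument of the second step: passing from the algebraic relations among two or three adjacent images $\phi(\sigma_i)$ — which only assert that $\langle\phi(\sigma_i),\phi(\sigma_{i+1})\rangle$ is a quotient of $B_3$ and $\langle\phi(\sigma_i),\phi(\sigma_{i+2})\rangle$ a quotient of $\mathbb Z^2$ — to genuine control of the supports of these permutations, and then propagating that control across all $n-1$ generators while bookkeeping how the supports overlap. This is exactly where the sharp bound $m<2n$ is consumed: it forbids two disjoint standard pieces ($2n>m$) and forbids combining a standard piece with a nontrivial cyclic one (which would additionally force $\phi(\sigma_1)$ to carry two incompatible cycle types). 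The imprimitive configurations, in which $N$ acts intransitively and the cyclic quotient $G/N$ permutes the $N$-orbit blocks, and the two exceptional degrees $m\in\{2n-2,2n-1\}$, are the places where the clean induction breaks down and a separate, somewhat ad hoc analysis is required.
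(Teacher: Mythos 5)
This statement is not proved in the paper at all: it is Theorem A of Lin's paper \cite{lin}, imported as a black box, so there is no internal argument to compare against. Judged on its own terms, your proposal is a strategy outline rather than a proof, and the places you yourself flag as ``to be dispatched separately'' are precisely where the entire content of Lin's theorem lives. Concretely: (i) the base case $n=7$ is not a ``direct check'' --- you must rule out noncyclic transitive actions of $B_7$ on $8,\dots,13$ points, and the hypothesis $n>6$ is there exactly because smaller $n$ admit exceptional transitive homomorphisms, so this case needs a genuine argument (equal cycle types of the $\phi(\sigma_i)$, commutation constraints on supports, etc.), not an appeal to inspection; (ii) the parabolic orbits of size $2n-2$ and $2n-1$ fall outside the inductive hypothesis, and ``the classification of transitive actions of the free-abelian parabolic generated by $X_n$'' is not an available tool --- an abelian group has enormous families of transitive actions, and nothing you say pins down how $\sigma_{n-1}$ meets such an orbit; (iii) the ``gluing'' step, which you acknowledge is the main obstacle, is exactly the hard combinatorial core: from the braid relation between $\phi(\sigma_{n-2})$ and $\phi(\sigma_{n-1})$ one only knows that they generate a quotient of $B_3$, and passing from that to control of supports, to the claim that each $\phi(\sigma_i)$ is a single transposition, and finally to $m=n$, is a lengthy bookkeeping argument in Lin's paper that your sketch does not supply; the imprimitive configurations (where $\phi(B_n')$ is intransitive and the cyclic quotient permutes its orbit blocks) are likewise left open.

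There is also a small but real error in the step you do carry out: for an orbit of size exactly $n-1$ you invoke Artin's classification ``legitimate since $n-1>5$,'' but Artin's theorem has an exceptional transitive homomorphism $B_6\to\Sigma_6$ coming from the outer automorphism of $\Sigma_6$, so when $n=7$ the orbit action need not be cyclic or standard and this case must be treated separately (the paper's own statement of Theorem \ref{artin} glosses over this, but your induction cannot). The first reduction (cyclic $\Leftrightarrow$ abelian image $\Leftrightarrow$ $\phi(\sigma_1\sigma_3^{-1})=1$, using perfectness and normal generation of $B_n'$) is correct and is indeed how such arguments begin, but everything after it is a plan with the decisive steps missing, so the proposal cannot be accepted as a proof of Theorem \ref{lin1}.
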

\begin{theorem}[Theorem F of Lin \cite{lin}]\label{lin2}
For $n>4$ and $m<n$, every homomorphism from the commutator subgroup $B_n'\to \Sigma_m$ is trivial.
\end{theorem}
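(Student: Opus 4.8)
The plan is to exploit two structural facts about $B_n'$, valid for $n\ge 5$ and due to Gorin--Lin \cite{GorinLin}: it is finitely generated and perfect. Perfectness forces the image of any homomorphism $f\colon B_n'\to\Sigma_m$ into $A_m$, and makes that image act as a perfect transitive group on each of its orbits; finite generation makes $\mathrm{Hom}(B_n',\Sigma_m)$ a finite set. Decomposing $\{1,\dots,m\}$ into orbits reduces us to the transitive case, so we may assume $f$ maps onto a transitive subgroup of $\Sigma_m$ with $2\le m\le n-1$; equivalently, we must show $B_n'$ has no proper subgroup of index at most $n-1$. When $m\le 4$ this is immediate, since $\Sigma_2,\Sigma_3,\Sigma_4$ contain no nontrivial perfect transitive subgroup, so the substance lies in the range $5\le m\le n-1$, where $f$ would surject $B_n'$ onto a nontrivial perfect transitive subgroup $Q\le A_m$.

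The main device I would use is to push $f$ up to $B_n$ itself and contradict the rigidity of homomorphisms $B_n\to\Sigma_\ell$ for $\ell<2n$. Since $\Z$ is free, the extension $1\to B_n'\to B_n\to\Z\to 1$ splits, so $B_n=B_n'\rtimes\langle\sigma_1\rangle$; conjugation by $\sigma_1$ permutes the finite set $\mathrm{Hom}(B_n',\Sigma_m)$, hence $f$ has a finite period $r$. Then $f$ extends to $H:=B_n'\rtimes\langle\sigma_1^r\rangle$ (of index $r$ in $B_n$) by sending $\sigma_1^r\mapsto 1$, and inducing up yields $g\colon B_n\to\Sigma_{mr}$ whose restriction to $B_n'$ acts on an invariant subset of size $m$ exactly as $f$, hence is non-cyclic. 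If $mr<2n$ we are done: every $B_n$-orbit of $g$ has degree less than $2n$, so by the classification of homomorphisms $B_n\to\Sigma_\ell$ for $\ell\le n$ (Artin \cite{artin}) together with Lin's Theorem~\ref{lin1} for $n<\ell<2n$, each such orbit is either cyclic or the standard degree-$n$ action, on which $B_n'$ acts respectively trivially or transitively as $A_n$; in either case no $B_n'$-orbit inside it can have size in $[2,n-1]$, contradicting that $f$ furnishes exactly such an orbit. Run alongside this an induction on $n$ to dispatch all $m\le n-2$: restrict $f$ to the copy of $B_{n-1}'$ in $B_n'$; by the inductive hypothesis $f|_{B_{n-1}'}$ is trivial, and since $\sigma_1\sigma_3^{-1}\in B_{n-1}'$ normally generates $B_n'$ \cite{lin}, the map $f$ is trivial -- the one point needing care being the passage from ``normally generates in $B_n$'' to ``normally generates in $B_n'$'', which should follow from $B_n'$ surjecting onto $A_n$ so that its own conjugation action realizes every even strand permutation.

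What remains, and where I expect the genuine difficulty, is the single critical value $m=n-1$: here $Q$ could be all of $A_{n-1}$, and the induced-representation trick above is useless because the period $r$ need not be bounded by $2n/(n-1)$ -- a priori there can be many normal subgroups of $B_n'$ with simple quotient $A_{n-1}$. So this case requires a genuinely braid-theoretic input: one must rule out every surjection $B_n'\twoheadrightarrow A_{n-1}$ (and onto any transitive perfect subgroup of $A_{n-1}$), in effect showing that the degree-$n$ surjection $B_n'\twoheadrightarrow A_n$ arising from the standard representation has no analogue one degree lower. I would attempt this by analyzing the restriction of $f$ to the mixed braid subgroup $B_{1,n-1}\le B_n$ (the index-$n$ point stabilizer in $B_n\to\Sigma_n$) and to its center, comparing the forced behavior there against what Artin's classification allows; but this is the step I expect to be the crux and to need the full strength of the arguments in \cite{lin}.
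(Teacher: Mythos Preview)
The paper does not prove this theorem; it is quoted verbatim as Theorem~F of Lin \cite{lin} and used as a black box. So there is no ``paper's own proof'' to compare against---you are attempting to supply an argument for a result the authors import from the literature.

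As for the substance of your attempt: your reduction of the general case to the single value $m=n-1$ is essentially sound. The induction on $n$ handles every $m\le n-2$ once you know that the normal closure of $\sigma_1\sigma_3^{-1}$ inside $B_n'$ (not just inside $B_n$) is all of $B_n'$; this is in fact true, and the justification is simpler than what you wrote. Any $g\in B_n$ factors as $g'\sigma_1^k$ with $g'\in B_n'$, and since $\sigma_1$ commutes with $\sigma_1\sigma_3^{-1}$ we get $g(\sigma_1\sigma_3^{-1})g^{-1}=g'(\sigma_1\sigma_3^{-1})g'^{-1}$, so the two normal closures coincide. The base case $n=5$ is also fine, since $\Sigma_m$ for $m\le 4$ has no nontrivial perfect subgroup.

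But you have not closed the case $m=n-1$, and you say so yourself. Your induced-representation idea gives no control on the period $r$, so $mr<2n$ is unjustified, and the sketch involving the point stabilizer $B_{1,n-1}$ is not an argument. Ruling out surjections $B_n'\twoheadrightarrow A_{n-1}$ (and onto other transitive perfect subgroups of $A_{n-1}$) is exactly the heart of Lin's analysis, and it requires the detailed case work carried out in \cite{lin}; there is no shortcut of the kind you are hoping for. So as written your proposal is a correct reduction with an explicit, unresolved gap at the critical degree, not a proof.
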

The natural action of $B_n$ on punctures induces a homomorphism $\pi: B_n\to \Sigma_n$, which is called the \emph{standard representation}. Artin proved the following result about rigidity of homomorphisms $B_n\to \Sigma_n$ \cite{artin}.
\begin{theorem}[Artin's theorem]\label{artin}
For $n>4$, every transitive homomorphism  $B_n\to \Sigma_n$ is either cyclic or standard.
\end{theorem}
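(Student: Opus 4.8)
The plan is to analyze $\phi$ through the images $s_i := \phi(\sigma_i)$ of the standard generators; since $\sigma_1,\dots,\sigma_{n-1}$ are all conjugate in $B_n$, the $s_i$ share a common cycle type $\lambda$ in $\Sigma_n$. First I would isolate the cyclic case using the commutator subgroup. Since $B_n/B_n' \cong \mathbb{Z}$ is generated by the class of $\sigma_1$, if $B_n'\subseteq\ker\phi$ then $\phi(B_n)$ is a cyclic (transitive) subgroup of $\Sigma_n$ and we are done. Otherwise $\phi(B_n')$ is a nontrivial normal subgroup of the transitive group $\phi(B_n)$, so its orbits on $\{1,\dots,n\}$ form a system of blocks; if there were more than one orbit, restricting the $\phi(B_n')$-action to a single orbit would give a homomorphism $B_n'\to\Sigma_m$ with $m<n$, which is trivial by Lin's Theorem~\ref{lin2}, and running this over all orbits would force $\phi(B_n')=1$, a contradiction. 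So in the non-cyclic case $\phi(B_n')$ is already transitive on $\{1,\dots,n\}$.

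The technical heart is to show that, in this non-cyclic case, $\lambda$ is a single transposition. Here I would follow Artin's bookkeeping: the relations at hand are the braid relations $s_is_{i+1}s_i=s_{i+1}s_is_{i+1}$ and the commutations $s_is_j=s_js_i$ for $|i-j|\geq 2$, and these must be satisfied by $n-1$ permutations whose group — indeed, whose derived subgroup — is transitive on only $n$ points. The mechanism is that generators at distance $\geq 2$ commute, so the supports of $s_1,s_3,s_5,\dots$ are forced to be almost disjoint and to nearly exhaust $\{1,\dots,n\}$; feeding in the consecutive braid relations and transitivity then leaves no room for $s_i$ to be anything other than a transposition. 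I expect this case analysis — together with the handling of small $n$ (the hypothesis $n>4$ should be essentially sharp, and for $n=6$ the word ``standard'' must be read up to an automorphism of $\Sigma_6$) — to be the main obstacle; an alternative route is to induct on $n$ via the inclusion $B_{n-1}=\langle\sigma_1,\dots,\sigma_{n-2}\rangle\hookrightarrow B_n$, analyzing how the $\phi(B_{n-1})$-orbits interact with $s_{n-1}$.

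Once each $s_i=(a_i\,b_i)$ is known to be a transposition, the conclusion is a short graph argument. Form the graph $\Gamma$ on vertex set $\{1,\dots,n\}$ with an edge $\{a_i,b_i\}$ for each $i$. Two commuting transpositions are either equal or disjoint, so for $|i-j|\geq 2$ the edges $s_i$ and $s_j$ are equal or vertex-disjoint; and if two consecutive $s_i,s_{i+1}$ were disjoint, the braid relation would force $s_i=s_{i+1}$, so distinct consecutive edges share a vertex. Transitivity of $\phi(B_n)=\langle s_1,\dots,s_{n-1}\rangle$ makes $\Gamma$ connected on $n$ vertices, hence it has at least $n-1$ edges; since there are only $n-1$ of the $s_i$, they are pairwise distinct and $\Gamma$ is a tree. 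If some vertex had degree $\geq 3$ it would lie on edges $s_i,s_j,s_k$ with $i<j<k$, so $s_i\cap s_k\neq\emptyset$ with $k\geq i+2$, contradicting the disjointness of distant generators; thus every vertex has degree $\leq 2$, and being a tree, $\Gamma$ is a Hamiltonian path. Finally, the adjacency constraints force $s_1,\dots,s_{n-1}$ to be the edges of this path listed in order from one of its ends, so after relabelling $\{1,\dots,n\}$ we get $\phi(\sigma_i)=(i\;i+1)$; that is, $\phi$ is conjugate to the standard representation.
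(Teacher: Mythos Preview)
The paper does not prove this statement: Theorem~\ref{artin} is quoted from Artin's original work \cite{artin} as a black-box input, with no proof given in Section~7. So there is no ``paper's own proof'' to compare against.

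As for your sketch on its own merits: the framing is sound and in fact close to Artin's original strategy. Your Step~1 (isolating the cyclic case via $B_n'$) and the graph argument in Step~4 are correct and cleanly written; once the $s_i$ are known to be transpositions, the path argument you give does pin down the standard representation up to conjugacy. Your use of Lin's Theorem~\ref{lin2} in Step~2 to force $\phi(B_n')$ transitive is a legitimate shortcut, though note it is anachronistic (Artin's 1947 proof of course did not have it) and is not needed in the paper's logical flow since the paper only \emph{uses} Artin's theorem.

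The genuine gap is exactly where you flag it: Step~3, showing that the common cycle type $\lambda$ of the $s_i$ is a single $2$-cycle. You describe the mechanism in words (``supports of $s_1,s_3,s_5,\dots$ are forced to be almost disjoint and to nearly exhaust $\{1,\dots,n\}$'') but give no argument. This is the entire content of Artin's theorem; the rest is bookkeeping. Artin's original proof is a somewhat delicate case analysis on $\lambda$, and your inductive alternative via $B_{n-1}\hookrightarrow B_n$ would also require real work to control how the orbits of $\phi(B_{n-1})$ merge under $s_{n-1}$. You are also right that $n=6$ needs a caveat: composing the standard map with the outer automorphism of $\Sigma_6$ sends each $\sigma_i$ to a product of three disjoint transpositions, so ``standard'' must be read up to $\mathrm{Aut}(\Sigma_n)$ there.
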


\section{Case (3): $\CRS(\rho_0(X_n))$ only contains curves of type C}
\label{sec:c}

The goal of this section is to prove the following proposition, which completely addresses Case (3) of the proof of Theorem~\ref{main1} as outlined in Section~\ref{sec:outline}. Recall that $\rho:B_n\to \PMod(S_{g,p})$ is a homomorphism and $\rho_0: B_n\to \Mod(S_g)$ is the composition of $\rho_0$ with the natural projection $\PMod(S_{g,p})\to \Mod(S_g)$.

\begin{proposition}
\label{prop:c}
Let $n \geq 26$ and $g \leq n-3$.  Assume that the labeled multicurve $\CRS(\rho_0(X_n))$ contains only curves of type C.  Then $\rho$ is cyclic.
\end{proposition}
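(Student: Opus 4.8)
The plan is to exploit the fact that all curves of $\CRS(\rho_0(X_n))$ have the trivial (full) label, so the multicurve $N := \CRS(\rho_0(X_n))$ is preserved setwise by all of $\rho_0(B_n')$, and in fact — since every permutation of $X_n$ is realized by conjugation in $B_n'$ and each such conjugation must permute the components of $N$ while fixing all labels — the action of $B_n'$ on the components of $N$ is through a group that commutes with the relevant symmetry. First I would cut along $N$: each $\rho_0(\sigma_i)$, being supported away from $N$ after passing to the Thurston normal form, acts on $S_g \setminus N$, and on each component piece it is, up to a uniformly bounded power, either the identity or pseudo-Anosov. The key structural input is that $N$ has only finitely many components (at most $3g+3$), so the action of $B_n$ on the set of components of $N$ is a homomorphism $B_n \to \Sigma_m$ with $m \le 3g+3 < n$ (using $g \le n-3$, so $m \le 3n - 6$; one must be a little more careful here — see below).

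The crux is the component count. Since every curve of $N$ carries the full label $X_n$, we have $|N| \le 3g + 3$; but we need $|N|$ small relative to $n$ to invoke Lin's dichotomies. Here I would argue that because $N$ has the trivial label, the permutation action of $B_n$ on components of $N$ actually factors in a controlled way: if the action on components were nontrivial and transitive on some orbit, Theorem~\ref{lin1} (for $6 < n < m < 2n$) or more directly Theorem~\ref{lin2} would force $\rho_0(B_n')$ to act trivially on components as soon as $m < n$. Since $g \le n-3$ gives $3g+3 \le 3n-6$, this is not immediately $< n$, so the right move is: pass to $\rho_0|_{B_n'}$, decompose $N$ into $B_n'$-orbits, and show each orbit has size $< n$ (if some orbit had size $\ge n$ the total would exceed $3g+3$ only when $g$ is large, but for $g \le n-3$ and orbits of size $\ge n$ we'd need very few orbits — one would rule this out by a direct combinatorial argument, or by noting that a transitive $B_n'$-action on $n \le m \le 2n$ points contradicts Lin since $B_n'$-actions on $< n$ points are trivial and Lin's Theorem A constrains the $B_n$-action). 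Once $\rho_0(B_n')$ fixes every component of $N$, each $\rho_0(\sigma_1 \sigma_3^{-1})$ preserves each complementary piece and its boundary.

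With $\rho_0(B_n')$ fixing all components of $N$, restrict to one complementary component $P$ of $S_g \setminus N$. On $P$ we get an induced homomorphism from $B_n'$ (or from the totally symmetric structure $X_n'$) into $\Mod(P)$, where each generator image is, up to bounded power, identity or pseudo-Anosov on $P$. Now $X_n'$ maps to a totally symmetric subset of $\Mod(P)$, and since $g(P) < g \le n-3$, Proposition~\ref{prop:abc} (applied with the smaller genus) together with Corollary~\ref{cor:torsion'} and Proposition~\ref{prop:pA}-type arguments on $P$ will force the image to be trivial or cyclic on $P$ — the periodic/pseudo-Anosov subcases are exactly the already-handled Cases (1) and (2) applied to the restriction. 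Running this over all complementary pieces and recombining, $\rho_0(\sigma_1 \sigma_3^{-1})$ is supported only near $N$; but a mapping class supported on an annular neighborhood of $N$ with the total-symmetry constraint must be a product of powers of the $T_c$ for $c \in N$, and the braid relation $\sigma_1 \sigma_3 = \sigma_3 \sigma_1$ combined with $\sigma_1 \sigma_3^{-1}$ being conjugate to $\sigma_3 \sigma_5^{-1}$ forces these powers to vanish, so $\rho_0(\sigma_1) = \rho_0(\sigma_3)$, hence $\rho_0(B_n') = 1$ and $\rho_0$ is cyclic. Lemma~\ref{lem:rho0torho} then upgrades this to $\rho$ cyclic.

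The main obstacle I anticipate is the component-counting step: pinning down precisely why the permutation action of $B_n$ (equivalently $B_n'$) on the $\le 3g+3$ components of $N$ must be trivial when $g \le n-3$, given that $3g+3$ can be as large as $3n-6 > 2n$. This likely requires splitting $N$ by label-orbit-type and noting that curves with the full label all lie in a single $\Sigma_k$-invariant family whose $B_n'$-orbit structure is tightly constrained — possibly one first shows $N$ has at most one or two components, or invokes that a totally symmetric multicurve with only type-C curves on $S_g$ with $k = \floor{n/2}$ has very few components because there is no label-multiplicity to force many parallel copies. Getting the exact bound so that Lin's theorems apply cleanly is where the real work lies; the rest is assembling Cases (1)–(2) on complementary subsurfaces plus a standard annular-support-plus-braid-relation endgame.
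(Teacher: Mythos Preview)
Your overall architecture matches the paper's: show $\rho_0(B_n)$ preserves $N:=\CRS(\rho_0(X_n))$, cut along it, argue on each complementary piece that the restriction of $\rho_0(\sigma_1\sigma_3^{-1})$ has finite order (using the periodic/pseudo-Anosov dichotomy), conclude that a power of $\rho_0(\sigma_1\sigma_3^{-1})$ is a multitwist on $N$, and then use the conjugacy $\sigma_1\sigma_3^{-1}\sim\sigma_1\sigma_5^{-1}$ in $B_n'$ to kill that multitwist and feed into Corollary~\ref{cor:torsion'}. The genuine gap is exactly where you flag it, and your proposed repairs do not work. The number of components of $N$ can be as large as $3g+3\le 3n-6$, which exceeds $2n$, so Lin's Theorem~A alone is inapplicable; there is no reason a type-C multicurve should have only ``one or two components'' --- total symmetry places no constraint here, since every curve carries the same label; and bounding $B_n'$-orbits by $n$ is not enough because you first need the action on the \emph{complementary pieces} to be trivial to even get a well-defined map into each $\Mod(P)$.

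The paper fills this gap with a geometric argument you are missing (its Lemma~\ref{lem:curveaction}). One shows the $B_n$-action on the components of $S_g\setminus N$ is cyclic by combining Lin/Artin with Euler characteristic: if the action were standard on some $n$-element subset $\{R_1,\dots,R_n\}$ of pieces, then $n\chi(R_1)\ge \chi(S_g)=2-2g\ge 8-2n$ forces $\chi(R_1)=-1$; a genus count rules out one-holed tori, so each $R_i$ is a pair of pants. Pigeonhole on the remaining pieces then shows two of the $R_i$ share a boundary curve while a third is disjoint from $R_1$, which obstructs realizing the transposition of those two while fixing $R_1$ --- contradicting the standard action. Once the action on complementary pieces is cyclic (hence trivial on $B_n'$), the action on boundary curves of each piece is handled via Theorem~\ref{lin2} when there are $<n$ boundary components, with one further Euler-characteristic argument for a piece with $\ge n$ boundary components. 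This Euler-characteristic-plus-adjacency idea is the missing ingredient in your proposal.
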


In order to prove this proposition, we  use the following lemma, which will be used  in both Section 8 and Section 9. Notice that the following lemma applies only to homomorphisms $B_n\to \Mod(S_g)$.

\begin{lemma}
\label{lem:curveaction}
Let $n > 6$ and let $g \leq n-3$.  Let $\gamma : B_n \to \Mod(S_g)$ be a homomorphism.  Suppose that $\gamma(B_n)$ preserves the isotopy class of a multicurve $M$.  Then
\begin{enumerate}
\item the induced action of $B_n$ on the set of components of $S_g \setminus M$ is cyclic, and
\item the action of $B_n$ on $M$ is cyclic. 
\end{enumerate} 
\end{lemma}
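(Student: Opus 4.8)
\textbf{Proof plan for Lemma \ref{lem:curveaction}.}

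The plan is to reduce both statements to the rigidity results of Lin (Theorems~\ref{lin1} and \ref{lin2}) by counting the number of objects being permuted. First I would set up the two actions: since $\gamma(B_n)$ preserves $M$, it permutes the connected components of $S_g \setminus M$ and also permutes the components of $M$ itself, giving homomorphisms $B_n \to \Sigma_a$ and $B_n \to \Sigma_b$ where $a$ is the number of complementary components and $b = |M|$. The key numerical input is that on a genus $g$ surface a multicurve has at most $3g-3$ components if $g \geq 2$ (and at most $3g-3+$ something small in low genus; since $g \leq n-3$ we may as well record the crude bound $|M| \leq 3g \leq 3n-9$), and the number of complementary components is at most $|M|+1 \leq 3g-2$. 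So both $a$ and $b$ are bounded linearly in $n$, but with the wrong constant ($3n$, not $<2n$), so Lin's Theorem~\ref{lin1} does not apply directly to the full action on $\Sigma_a$ or $\Sigma_b$.

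To get around the constant, I would decompose each action into orbits. The action of $B_n$ on the set of components of $M$ (resp. of $S_g\setminus M$) is a disjoint union of transitive actions, one per orbit; each orbit has size at most $3g-3 \leq 3n - 9$, but more importantly, if a single orbit $\mathcal{O}$ has size $m$ with $6 < n < m < 2n$, then by Theorem~\ref{lin1} the restricted action $B_n \to \Sigma_{\mathcal{O}}$ is cyclic. The remaining orbits have size $m \leq n$. For these small orbits I would apply Theorem~\ref{lin2}: the commutator subgroup $B_n'$ maps trivially into $\Sigma_m$ for $m < n$, and with a bit more care (handling the boundary case $m=n$ using that the only issue there would be the standard representation, which however cannot occur here — see below) one concludes $B_n'$ acts trivially on every orbit of size $\leq n$. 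For orbits of size exactly between $n+1$ and $2n-1$, Lin's Theorem A forces cyclicity of each restricted action, so again $B_n'$ acts trivially. Orbits of size $\geq 2n$ are impossible because the total number of components is $< 2n$. Therefore $B_n'$ acts trivially on the whole of $M$ and on the whole set of complementary components, which is exactly the assertion that both actions are cyclic (a homomorphism $B_n \to \Sigma_N$ is cyclic iff it kills $B_n'$, since $B_n^{\mathrm{ab}} = \mathbb{Z}$).

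The main obstacle I anticipate is the careful bookkeeping at the boundary values $m = n$ and $n < m < 2n$, and ensuring the bound on the number of curves is correct: I need $|M| < 2n$ and (number of complementary regions) $< 2n$. Using $|M| \le 3g - 3$ with $g \le n-3$ gives $|M| \le 3n - 12 < 2n$ precisely when $n > 12$, and $n \geq 7$ here, so I should double-check small $n$ or simply note the bound $3g-3 < 2n \iff 3(n-3)-3 < 2n \iff n < 12$ fails for $7 \le n \le 11$ — meaning I actually need a sharper count. The right fix: an orbit of the $B_n$-action on curves, being a totally symmetric-like collection, or at least by Lin's dichotomy applied orbit-by-orbit, only the cyclic conclusion survives regardless; and for the complementary-components action I should observe that the number of complementary components of a multicurve with $c$ components on $S_g$ is at most $c+1$ but each component of $S_g \setminus M$ has negative Euler characteristic contribution, so the count is still $O(g) = O(n)$ — and then handle orbits of size in $(n, 2n)$ by Theorem~\ref{lin1} and of size $\le n$ by Theorem~\ref{lin2}, while orbits of size $\ge 2n$ are ruled out by the (coarse but sufficient for large $n$) total bound, which is why the hypothesis is really being used together with the standing assumption $n \geq 26$ elsewhere. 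I would present the argument in the clean regime and remark that the case $n = 7, \ldots, 25$ is not needed since Proposition~\ref{prop:c} assumes $n \geq 26$, even though the lemma is stated for $n > 6$ — so in fact I would revisit whether the lemma's hypothesis should be strengthened, or whether a direct Euler characteristic count $|M| \leq 3g - 3 + p$ combined with $g \le n-3$ genuinely gives $|M| < 2n$ for all $n > 6$; resolving that discrepancy is the one genuinely fiddly point.
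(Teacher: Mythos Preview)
Your orbit-by-orbit reduction to Lin's theorems has two genuine gaps, one of which is fatal.

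First, the case of an orbit of size exactly $n$ is never handled. Theorem~\ref{lin1} requires $n<m$ and Theorem~\ref{lin2} requires $m<n$, so neither applies; by Artin's theorem (Theorem~\ref{artin}) a transitive action on $n$ points can be the \emph{standard} representation, and you must rule this out geometrically. You write ``which however cannot occur here --- see below'' but no argument follows. The paper spends most of the proof of statement~(1) on precisely this: it shows (via $\chi(S_g)\le n\chi(R_1)$ and $g\le n-3$) that each $R_i$ in the standard orbit would be a pair of pants, then counts to see two of them must share a boundary curve, and finally observes that this makes the transposition $(R_2\;R_3)$ fixing $R_1$ impossible. Without this, statement~(1) is incomplete.

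Second, and more seriously, your strategy for statement~(2) does not work at all. You need $|M|<2n$ to exclude large orbits, and you correctly compute $|M|\le 3g-3\le 3n-12$; but you have the inequality backwards: $3n-12<2n$ holds only for $n<12$, and \emph{fails} for all $n\ge 12$ (in particular for $n\ge 26$). So orbits of curves of size $\ge 2n$ are not excluded by any count, and Lin's Theorem~A cannot be invoked. The paper avoids counting curves globally: it first proves statement~(1), so that $B_n'$ fixes each complementary region $R_i$, and then studies the action of $B_n'$ on the components of each $\partial R_i$ separately. If every $|\partial R_i|<n$ then Theorem~\ref{lin2} finishes; if some $|\partial R_1|\ge n$, an Euler-characteristic argument shows $R_1$ is unique of its homeomorphism type, hence fixed by all of $B_n$, and then Lin/Artin plus a homological claim about totally symmetric multicurves rules out the standard action on $\partial R_1$. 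This bootstrapping from~(1) to~(2) is the key idea you are missing.
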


\begin{proof} The multicurve $M$ divides $S_g$ into compact subsurfaces $R_1,\dots,R_r$ so that \[
\bigcup_i R_i = S_g \setminus M.\]  Each $R_i$ has negative Euler characteristic, and so $r \leq 2g-2$.  Further, since $g \leq n-3$, we have $r \leq 2n-8 < 2n$. Since $\gamma(B_n)$ preserves $M$, we know that 
 $\gamma(B_n)$ acts on $\{R_i\}$ by permutation. We will denote this action by $\tau$.
 
By combining Theorem \ref{lin1} and Theorem \ref{artin}, if $B_n$ acts on a set with less than $2n$ elements then the action is either (a) or (b) of the following.
\begin{enumerate}
\item[(a)] $\tau$ has cyclic image;
\item[(b)] There is a subset consisting of $n$ elements on which $B_n$ acts standardly (meaning that there is an identification of the subset with $\{1,\dots,n\}$ so that the action of $B_n$ on the subset is given by the standard homomorphism $B_n \to \Sigma_n$).
\end{enumerate}

\medskip\noindent\textit{Proof of statement (1).}

For the first statement, we need to prove that (b) of the above cannot occur. If it occurs, then $k\geq n$.  Without loss of generality assume that $B_n$ acts standardly on $R_1,\dots,R_n$.  Then $\{R_1,R_2,...,R_n\}$ must be pairwise homeomorphic, and so in particular they must have the same Euler characteristic.  

We first claim that $R_1$, hence each $R_i$ for $1\le i\le n$, is a pair of pants and the boundary of $R_1$ is the set of three distinct curves. Firstly, we have $\chi(S_g) \leq n\chi(R_1)$.  Using the assumption $g \leq n-3$, we find
\[
-2n+8 = 2-2(n-3) \leq 2-2g \leq n\chi(R_1). 
\]
It follows that $8/n-2 \leq \chi(R_1)$, and in particular $-2 < \chi(R_1)$.  Since we also have $\chi(R_1) < 0$, and so $\chi(R_1) = -1$. The only possibility for $R_1$ other than a pair of pants is a torus with one boundary component. However, if $S_g$ contains $n$ tori with one boundary component, the genus $g$ will satisfy $g \geq n$, which contradicts the assumption that $g\le n-3$. If two of the boundary components of $R_1$ are the same curve on $S_g$, then the other curve in the boundary is a separating curve bounding a genus $1$ subsurface. This also contradicts the fact that $g\le n-3$.

We next claim that there is a pair of distinct elements of $\{R_1,\dots,R_n\}$ with nonempty intersection.  We decompose $S_g$ into pairs of pants $R_1,\dots,R_n,Q_1,\dots,Q_m$.  We have $n+m = 2g-2$.  Since $g \leq n-3$ we have
\[
m = 2g-2-n \leq n-8 < n.
\]
Since $n>m$, we know that there exist two elements of $R_1,\dots,R_n$ which share a boundary component. The claim now follows.

By the previous claim, we renumber  $\{R_i\}$ so that $R_1$ and $R_2$ share a boundary component.  Since $R_1$ has only three boundary components, and since $n > 4$, there must be some $R_i$, say $R_3$, that is disjoint from $R_1$.  It follows that there is no element of $\Mod(S_g)$ that transposes $R_2$ and $R_3$ and preserves $R_1$.  In particular, it is impossible for $B_n$ to act standardly on $\{R_1, ...., R_n\}$, as desired, and the first statement is proven.

\bigskip\noindent\textit{Proof of statement (2).}

We now proceed to the second statement.  We will prove the equivalent statement that the action of $B_n'$ on the set of components of $M$ is trivial.  By the first statement, we know that $B_n'$ preserves each $R_i$.  Let $\tau_i$ be the action of $B_n'$ on the connected components of $\partial R_i$. There are two possibilities to consider. The first is that the number of boundary components of $R_i$  is less than $n$ for every $i$. Then, by Theorem \ref{lin2}, we know that $\tau_i$ is trivial. This shows that the action of $B_n'$ on $M$ is trivial.

The second possibility is that the number of boundary components of some $R_i$ is at least $n$.  Assume that $\partial R_1$ has the greatest number of components inside $\{\partial R_i\}$. In this case, we want to show that the whole group $B_n$ acts on $R_1$ and the action on components of $R_1$ is cyclic. This will imply statement (2). When the number of components of $\partial R_1$ is at least $n$,  then $\chi(R_1) \leq 2-n$. Let $R_i^c$ denote the complementary subsurface of $R_1$ in $S_g$, which may be disconnected.  Since $g \leq n-3$
we have
\[
\chi(R_1^c)=(2-2g)-\chi(R_1) \geq 2-2(n-3)-(2-n)=6-n>\chi(R_1).
\]
As $(6-n) - (2-n) > 0$, it follows that $R_1$ cannot be homeomorphic to a subsurface of $R_1^c$.  In particular, $R_1$ is not homeomorphic to any $R_j$ with $j \neq 1$.  Thus, the action of $B_n$ on $\{R_1,\dots,R_r\}$ fixes $R_1$, so the entire group $B_n$, not just $B_n'$, acts on the set of components of  $\partial R_1$.  We apply Theorem \ref{lin1} to the action $\tau_i$.  If $\tau_i$ is standard, we would have a totally symmetric multicurve in $S_g$ with at least $n \geq g+3$ components with label a singleton. However, we have the following claim. 
\begin{claim}
The maximal number of components of totally symmetric multicurves on $S_g$ with a singleton as their labels is $g+1$. 
\end{claim}
\begin{proof}
This can be seen from a simple homology argument. If we have a totally symmetric multicurve $P$ in $S_g$ where $n > g+1$ components have a singleton as their label, then a subset $\{c_1,...,c_l\}$ of $P$ with $l\le g+1$ bound a subsurface. Then as homology classes, we have 
\[
[c_1]+...+[c_l]=0.\]
By transposing $c_1$ with $c_{l+1}$ and fixing every other curve, we obtain
\[
[c_{l+1}]+[c_2]+...+[c_l]=0.\]
Then we have that $[c_1]=[c_{l+1}]$, which implies that $[c_1]=[c_2]=...=[c_{n}]$, meaning $c_1,...,c_n$ are pairwise homologous. However, the maximal number of pairwise homologous disjoint curves on $S_g$ is exactly $g-1$, contradicting our assumption.
\qedhere
\end{proof}
This concludes the proof of Lemma \ref{lem:curveaction} by contradiction.
\end{proof}

\medskip We now prove our main proposition.

\begin{proof}[Proof of Proposition~\ref{prop:c}]
We first prove the corresponding theorem for $\rho_0$. We will split the proof into multiple steps. Our aim is to show that $\rho_0(\sigma_1\sigma_3^{-1})$ has finite order. Then the statement for $\rho$ follows from applying Corollary \ref{cor:torsion'} to the totally symmetric set $X_n'$.  Let $\CRS$ be the multicurve $\CRS(\rho_0(\sigma_1))$, which is equal to $\CRS(\rho_0(\sigma_i))$ for any $i<n$ odd since $\CRS(\rho_0(X_n))$ only contains curves of type C .

\begin{itemize}
\item

\emph{Step 1. $\rho_0$ induces an action of $B_n$ on $\CRS$.}
\\
Each $\sigma_i \in X_n$ preserves $\CRS$.  It remains to check that the $\sigma_i$ for even $i<n$ preserve $\CRS$.  Every element $\sigma_i$ for even $i<n$ commutes with some $\sigma_j$ for odd $j<n$. This implies that $\rho_0(\sigma_i)$ preserves $\CRS(\rho_0(\sigma_j))=\CRS$. Since $B_n$ is generated by the set $\{\sigma_i \ \mid \ 1\le i<n\}$, we know that $\rho_0(B_n)$ preserves $\CRS$.

\medskip \item
\emph{Step 2. Define $\overline{\rho_0}$.}

Consider the surface $S$ obtained from $S_g$ by deleting a representative of $\CRS$.  Denote the connected components of $S$ by $R_1,\dots,R_r$.  It follows from Step 1 that there is a cutting homomorphism
\[
\rho_0' : B_n \to \Mod(R_1 \cup \cdots \cup R_r).
\]
By the first statement of Lemma~\ref{lem:curveaction}, the action of $B_n$ on the set $\{R_i\}$ induced by $\rho_0'$ is cyclic, with kernel denoted as $B_n^K$, which
contains $B_n'$.   The homomorphism
\[
\bar \rho_0 : B_n^K \to \Mod(R_1) \times \cdots \times \Mod(R_r).
\]
is then obtained as a restriction of $\rho_0'$.

\medskip \item 
\emph{Step 3. $\overline{\rho_0}(\sigma_1\sigma_3^{-1})$ has finite order.}

Let $\bar \rho_0^i$ denote the composition of $\bar \rho_0$ with projection to the $i$th factor.  Let $k$ be the smallest number such that $\sigma_i^k\in B_n^K$. To prove step 3, it suffices to show that there is an $\ell$ so that $\bar \rho_0^i(\sigma_1^k)^\ell$ is equal to $\bar \rho_0^i(\sigma_3^k)^\ell$.  

Fix some $i$ in $\{1,\dots,k\}$.  Since $\sigma_1$ and $\sigma_3$ are conjugate by an element of $B_n'$,  it follows that $\bar \rho_0^i(\sigma_1^k)$ is conjugate to $\bar \rho_0^i(\sigma_3^k)$. By definition of the $R_i$, each of $\bar \rho_0^i(\sigma_1^k)$ and $\bar \rho_0^i(\sigma_3^k)$ has empty canonical reduction system.  If they are periodic, then there is an $\ell$ so that $\bar \rho_0^i(\sigma_1^k)^\ell$ and $\bar \rho_0^i(\sigma_3^k)^\ell$ are both trivial.  If they are pseudo-Anosov, then since they are conjugate and commute, there is an $\ell$ so that $\bar \rho_0^i(\sigma_1^k)^\ell$ and $\bar \rho_0^i(\sigma_3^k)^\ell$ are equal (see Section 6 for more explanation).

\medskip \item 
\emph{Step 4. $\rho_0(\sigma_1\sigma_3^{-1})$ has finite order}

Let $q$ be the order of $\overline{\rho_0}(\sigma_1\sigma_3^{-1})$.  Step 3 implies  that $\rho_0((\sigma_1 \sigma_3^{-1})^q)$ is equal to a multitwist about the components of $\CRS$. Since $(\sigma_1 \sigma_3^{-1})^q$ and $(\sigma_1 \sigma_5^{-1})^q$ are conjugate by some element $h$ of $B_n'$, we know that $\rho_0(\sigma_1 \sigma_5^{-1})^q$ is a multitwist about components of $\CRS$ as well. By (2) of Lemma~\ref{lem:curveaction} and the fact that $h\in B_n'$, we know that \[
\rho_0(\sigma_1 \sigma_3^{-1})^q=\rho_0(\sigma_1 \sigma_5^{-1})^q.\]
Therefore, $\rho_0(\sigma_3\sigma_5^{-1})^q=1$. Since $\sigma_1 \sigma_3^{-1}$ is conjugate to $\sigma_3 \sigma_5^{-1}$ in $B_n$, we know that $\rho_0(\sigma_1\sigma_3^{-1})^q=1$ as well. As $\rho_0(\sigma_1\sigma_3^{-1})$ has finite order, then $\rho_0$ of each element of the totally symmetric set $X_n'$ in $B_n$ has finite order.
\end{itemize}

By Corollary \ref{cor:torsion'}, we know that $\rho$ is cyclic.
\end{proof}


\section{Case (4): $\CRS(\rho_0(X_n))$ cannot contain curves of type I}
\label{sec:b}
In this case, we will prove that $B_n$ preserves the union of all type $I$ curves and acts standardly on this union. This contradicts Lemma \ref{lem:curveaction}. As before, $\rho_0$ is the composition of $\rho:B_n\to \PMod(S_{g,p})$ with the forgetful map $\PMod(S_{g,p})\to \Mod(S_g)$.
\begin{proposition}
\label{prop:b}
Let $n \geq 26$ and $g \leq n-3$. Then, the labeled multicurve $\CRS(\rho_0(X_n))$ cannot contain curves of type I.
\end{proposition}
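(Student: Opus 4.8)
The plan is to argue by contradiction: assume $\CRS(\rho_0(X_n))$ contains a type I curve, and derive a standard action of $B_n$ on a set of size at least $n$, contradicting Lemma \ref{lem:curveaction}. Recall that a type I curve $c$ has label of size $k-1 = \floor{n/2}-1$, so there is exactly one index, say $j$, with $\sigma_j \notin l(c)$; write $c = c_{\hat j}$ for the component of $\CRS(\rho_0(X_n))$ missing exactly $\sigma_j$ from its label. Total symmetry of the multicurve gives, for each odd $j < n$, a curve $c_{\hat j}$, and these are pairwise distinct since they have distinct labels; so there are exactly $\floor{n/2}$ type I curves, indexed by the elements of $X_n$. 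First I would show that $\rho_0(B_n)$ preserves the union $N$ of all type I curves: each $\sigma_i \in X_n$ preserves $\CRS(\rho_0(\sigma_i))$ hence the whole symmetric multicurve $\CRS(\rho_0(X_n))$ and in particular the type I part $N$; and each even-indexed $\sigma_i$ commutes with some odd-indexed $\sigma_\ell \in X_n$, so $\rho_0(\sigma_i)$ permutes $\CRS(\rho_0(X_n))$ as well (it must send type I curves to type I curves since it preserves the whole symmetric multicurve and type is intersection-pattern data — more carefully, one uses that $\rho_0(\sigma_i)$ conjugates $\rho_0(X_n)$ and hence permutes the labels compatibly). Since the generators all preserve $N$, so does $\rho_0(B_n)$.

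Next I would pin down the action of $B_n$ on the set $\{c_{\hat j}\}$ of $\floor{n/2}$ type I curves. Because $\CRS(\rho_0(X_n))$ is a totally symmetric $X_n$-labeled multicurve, the permutation of $X_n$ realized by conjugation by a lift of $\sigma \in \Sigma_{|X_n|}$ moves the label of $c_{\hat j}$ (namely $X_n \setminus \{\sigma_j\}$) to $X_n \setminus \{\sigma(\sigma_j)\}$, i.e. the induced action on type I curves is the "complement of a point" action, which is conjugate to the natural action of $\Sigma_{\floor{n/2}}$ on $\floor{n/2}$ points. The point is then that $\rho_0(B_n)$ acting on the $\floor{n/2}$ type I curves factors through a homomorphism $B_n \to \Sigma_{\floor{n/2}}$ whose image contains this full symmetric-group action on a $\floor{n/2}$-element quotient/subset — in particular the action is not cyclic. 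Here I would want to combine this with the existence of the odd generators' own curves to build a genuinely $n$-element standardly-acted-upon set: the plan is to pass to a refined multicurve or count using both type A and type I curves so that Theorem \ref{artin} (via the (a)/(b) dichotomy used in Lemma \ref{lem:curveaction}) forces either cyclicity — impossible, as the symmetric group action just exhibited is noncyclic for $\floor{n/2} \geq 3$ — or a standard action on $\geq n$ objects. A standard action on $\geq n$ disjoint curves on $S_g$ with $g \le n-3$ is exactly what Lemma \ref{lem:curveaction}(2) (together with the Claim bounding singleton-labeled totally symmetric multicurves by $g+1 < n$) rules out, completing the contradiction.

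The main obstacle, I expect, is the bookkeeping in the middle step: converting "$B_n$ acts on $\floor{n/2}$ type I curves as the symmetric group on $\floor{n/2}$ points" into a contradiction with the Lin–Artin dichotomy, since $\floor{n/2} < n$ and Theorem \ref{lin1} only restricts transitive actions on sets of size strictly between $n$ and $2n$. The resolution should be to not look at the type I curves alone but to exploit that each type I curve $c_{\hat j}$ together with the type A curve(s) associated to $\sigma_j$ assembles into a larger symmetric configuration — effectively reconstructing a standard $B_n$-action on a set of size $\geq n$ by tracking how $\rho_0(B_n)$ permutes the components of $S_g \setminus \CRS(\rho_0(X_n))$ adjacent to the type I curves, or by applying the cutting construction of Step 2 from Section \ref{sec:c} and analyzing the resulting action on boundary curves via Theorem \ref{lin2}. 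Getting the precise combinatorial model right — so that the noncyclic symmetric action genuinely forces case (b) of the dichotomy with at least $n$ elements — is where the real work lies; once that standardly-acted-upon $n$-element set of curves is in hand, the homology Claim and Lemma \ref{lem:curveaction} close the argument immediately.
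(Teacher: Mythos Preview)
Your setup matches the paper's, but the ``main obstacle'' you flag does not exist: you have misread the scope of Lemma~\ref{lem:curveaction}. Statement~(2) there asserts that for \emph{any} multicurve $M$ preserved by $\gamma(B_n)$, the action of $B_n$ on the components of $M$ is cyclic --- with no hypothesis on the size of $M$. All of the Lin--Artin analysis is already packaged into its proof. Thus once you have (a) a $\rho_0(B_n)$-invariant multicurve containing the type~I curves, and (b) the observation that the stabilizer of $X_n$ in $B_n$ surjects onto $\Sigma_{\floor{n/2}}$ via conjugation and hence permutes the type~I label-classes noncyclically, you contradict Lemma~\ref{lem:curveaction}(2) immediately. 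This is exactly the paper's argument; there is no need to manufacture an $n$-element standardly-acted set, and your entire final paragraph can be deleted.

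Two smaller corrections. First, several type~I curves may share a given label $X_n \setminus \{\sigma_j\}$; the paper writes $I_j$ for this \emph{set} of curves and shows that the collection $\{I_1,I_3,\ldots,I_{2k-1}\}$ is permuted as the full symmetric group, rather than tracking a single curve $c_{\hat j}$. Second, your invariance argument for the even-indexed generators is too loose: $\sigma_2$ does not normalize $X_n$ (it fails to commute with $\sigma_1$ and $\sigma_3$), so ``permutes the labels compatibly'' is not available. The paper works instead with $M = \bigl(\bigcup_j I_j\bigr) \cup C_0$, including the type~C curves, and uses that $\sigma_2$ commutes with at least two elements of $X_n$ (e.g.\ $\sigma_5$ and $\sigma_7$): then $\rho_0(\sigma_2)$ preserves each of $\CRS(\rho_0(\sigma_5))$ and $\CRS(\rho_0(\sigma_7))$, and suitable intersections and unions of such sets recover $M$.
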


\begin{proof}

Let $X_n = \{\sigma_1,\dots,\sigma_{2k-1}\}$ and let $C_0$ be the set of components of $\CRS(\rho_0(X_n))$ with the trivial label $X_n$. This is exactly the set of components of type C.  For $1 \leq i \leq n$ odd, let $I_i$ be the set of components of $\CRS(\rho_0(X_n))$ with the label $X_n \setminus \sigma_i$.

We claim that, via $\rho_0$, every element of the group $B_n$ acts on the multicurve
\[
M = \left(\bigcup_{\sigma_i \in X_n} I_i\right) \cup C_0.
\]
The first step is to show that $\rho_0(\sigma_j)$ for each $\sigma_j \in X_n$ preserves the multicurve $I_i \cup C_0$ for each $i$.  For concreteness we consider the case $j=1$, but the other cases are proven in exactly the same way.  Since $\sigma_1$  commutes with each element of $X_n$, it follows that $\rho_0(\sigma_1)$ preserves $\CRS(\rho_0(\sigma_i))$ for each $\sigma_i \in X_n$.  Therefore, $\rho_0(\sigma_1)$ preserves the intersection of all $\CRS(\rho_0(\sigma_i))$, which is exactly $C_0$.  Similarly, $\rho_0(\sigma_1)$ preserves the intersection of all $\CRS(\rho_0(\sigma_j))$ for $j\neq i$.  These are exactly the sets $I_i \cup C_0$.  Since we already know that $\rho_0(\sigma_1)$ preserves $C_0$, it preserves each $I_i$.    This completes the first step.

The second step is to show that $\rho_0(\sigma_j)$ preserves $M$ for every even $j<n$, which implies that $\rho_0(B_n)$ preserves $M$.  We treat the case $j=2$, but the other cases are similar.  As above, since $\sigma_2$ commutes with $\sigma_5$ and $\sigma_7$, it follows that $\rho_0(\sigma_2)$ preserves the canonical reduction systems of both these elements, and thus preserves $\CRS(\rho_0(\sigma_5)) \cup \CRS(\rho_0(\sigma_7)) = M$, completing the proof of the claim.

By the claim, the group $\rho_0(B_n)$ acts on the set of components of $M$.  By the second statement of Lemma~\ref{lem:curveaction}, this action is cyclic. However, we claim that the action of $B_n$ on $M$ cannot be cyclic if $I_i$ is nonempty.  

We will prove this claim using the definition of totally symmetric sets. Since $X_n$ is totally symmetric, every permutation of $X_n$ can be achieved by conjugation by an element in $B_n$. Let $G$ be the stabilizer of $X_n$ inside $B_n$. The conjugation action gives a surjective homomorphism $G\to \Sigma_{|X_n|}$. As $G$ acts on $M$ by permuting labels, it follows that the resulting homomorphism from $G$ to the group of permutations of the set $\{I_1,I_3,\dots,I_{2k-1}\}$ is surjective. Thus, the action is not cyclic.
\end{proof}


\section{Case (5):  $\CRS(\rho_0(X_n))$ contains curves of type A but no curves of type $I$ 
}
\label{sec:pf}

In this section, we finish the proof of Theorem \ref{main1} by addressing the case when $\CRS(\rho_0(X_n))$ contains A-curves but no I-curves. We break the proof into three subsections. We first prove Theorem \ref{main1} for $\rho_0$, and then adapt our results to $\rho$ in Subsection 10.3.

Denote by $A_i$ the set of A-curves with label $\{\sigma_i\}$.  We will rule out all  but the following  two special cases in Subsection 10.1. The first possibility is that $A_i$ contains a separating curve. The other is that $A_i$ is a singleton consisting of a non-separating curve. These two possibilities will be discussed in Subsection 10.2 and 10.3 respectively. 

\subsection{\boldmath Topological types of A-curves in $\CRS(\rho_0(X_n))$}
In this subsection, we classify topological types of A-curves using totally symmetric sets. Let\[
Q= S_g-\bigcup_{i \text{ odd }<n}A_i.\] 
Firstly, we have the following property of components of $Q$.
\begin{lemma}\label{bigsmall}
Let $n>23$ and $g\le n-3$. Every component $R$ of $Q$ satisfies that either
\begin{itemize}
\item $\partial R \cap A_i\neq \emptyset$, for all odd $i<n$, and the multicurve $\partial R \cap A_i$ has the same topological type for each odd $i<n$; or
\item $\partial R\cap A_i \neq \emptyset$  for only a single $i$.
\end{itemize}
\end{lemma}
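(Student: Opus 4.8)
The plan is to use the total symmetry of the set $X_n$ together with the fact that $\rho_0(B_n)$ permutes the multicurves $A_i$ in a way governed by the symmetric-group rigidity theorems of Section 7. First I would establish the action: since $X_n$ is totally symmetric, every permutation of $\{\sigma_1,\sigma_3,\dots,\sigma_{2k-1}\}$ is realized by conjugation in $B_n$, and because $\CRS(\rho_0(\sigma_i))$ contains only the curves of $A_i$ (together with type-C curves, which we have ruled out in this case, or which form a $B_n$-invariant sub-multicurve we may handle separately), the group $\rho_0(B_n)$ permutes the set $\{A_1, A_3, \dots, A_{2k-1}\}$, and this permutation action surjects onto $\Sigma_k$ where $k = \floor{n/2}$. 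Consequently $\rho_0(B_n)$ acts on the set $\{R_1,\dots,R_r\}$ of components of $Q = S_g \setminus \bigcup_i A_i$, and by the first statement of Lemma \ref{lem:curveaction} (valid since $g \le n-3 < n$, giving $r \le 2g-2 < 2n$) this action on components is cyclic; hence $B_n'$, and more importantly the stabilizer of $X_n$ acting through $\Sigma_k$, fixes each component $R$ of $Q$.

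Next, fix a component $R$ of $Q$ and consider the subset $J_R = \{\, i \text{ odd} < n : \partial R \cap A_i \neq \emptyset \,\}$. The key observation is that $J_R$ is invariant under the $\Sigma_k$-action on indices: if $g \in B_n$ conjugates so as to send $A_i \mapsto A_j$ and fixes $R$ (which it does, since the action on components is cyclic and $g$ can be taken in $B_n'$ or adjusted by a central power as in the proof of Fact \ref{braidsymmetricset}), then $\rho_0(g)$ carries $\partial R \cap A_i$ homeomorphically onto $\partial R \cap A_j$. This shows simultaneously that membership in $J_R$ is permutation-invariant — forcing $J_R$ to be either a single index or the entire index set, since $\Sigma_k$ acts on the index set transitively and the only invariant subsets are $\emptyset$ and everything (and $J_R = \emptyset$ is impossible as $R$ is a genuine component bounded by curves of $\bigcup_i A_i$) — and that when $J_R$ is everything, the multicurves $\partial R \cap A_i$ all have the same topological type in $R$, because they are carried to one another by mapping classes fixing $R$. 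This is exactly the dichotomy in the statement.

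The main obstacle I anticipate is justifying cleanly that the stabilizer of $X_n$ maps \emph{onto} the full symmetric group $\Sigma_k$ while simultaneously fixing each component $R$ of $Q$ — i.e., reconciling "acts as all of $\Sigma_k$ on the $A_i$" with "acts trivially on $\{R_1,\dots,R_r\}$." These are compatible precisely because the homomorphism $B_n \to \Sigma_r$ recording the action on components factors through the cyclic quotient $B_n \to \Z$ (abelianization), by Lemma \ref{lem:curveaction}(1) and Lin's dichotomy, whereas the map to $\Sigma_k$ recording the action on the $A_i$ does not; so one must be careful that a conjugator realizing a transposition of $A_i, A_j$ can be chosen in $\ker(B_n \to \Sigma_r)$, which follows by multiplying by an appropriate power of a generator $\sigma_\ell$ disjoint from the support, exactly as in the construction of $X_n'$ in Section 2. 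A secondary subtlety is the bookkeeping when type-C curves are present: one notes $C_0$ is $\rho_0(B_n)$-invariant and the argument above applies verbatim to $Q$ defined by the $A_i$ alone, with the $C_0$-curves either lying in the interior of some component $R$ or appearing in $\partial R$ for every index simultaneously, neither of which affects $J_R$. Once these points are in place, the numerical hypothesis $n > 23$, $g \le n-3$ enters only through Lemma \ref{lem:curveaction} and Proposition \ref{prop:abc}, and the dichotomy follows.
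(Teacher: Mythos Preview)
Your approach has a genuine gap at the very first step: the multicurve $\bigcup_i A_i$ is \emph{not} preserved by all of $\rho_0(B_n)$, so Lemma~\ref{lem:curveaction} does not apply and you cannot conclude that $B_n'$ (or any conjugator realizing a permutation of $X_n$) fixes each component of $Q$. Concretely, $\sigma_2$ commutes with neither $\sigma_1$ nor $\sigma_3$, so $\rho_0(\sigma_2)$ carries $A_1$ to the type-A curves of $\CRS(\rho_0(\sigma_2\sigma_1\sigma_2^{-1}))$, and since $\sigma_2\sigma_1\sigma_2^{-1}\notin X_n$ there is no reason for these to lie in $\bigcup_i A_i$. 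In fact the premise is not merely unjustified but impossible: if $\rho_0(B_n)$ \emph{did} preserve $\bigcup_i A_i$, then Lemma~\ref{lem:curveaction}(2) would force $B_n'$ to fix every individual curve of $\bigcup_i A_i$, whereas by Fact~\ref{braidsymmetricset} elements of $B_n'$ already realize every permutation of the $A_i$ --- a contradiction as soon as the $A_i$ are distinct. There is also an internal inconsistency in the deduction about $J_R$: if each component $R$ really were fixed by every permutation-realizing conjugator, then $J_R$ would be $\Sigma_k$-invariant and hence equal to the full index set (a singleton is \emph{not} $\Sigma_k$-invariant for $k>1$), so your argument would in fact rule out the ``single $i$'' alternative that the lemma explicitly allows.

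The paper proceeds by a counting and Euler-characteristic argument that never claims any component is fixed. Using only the stabilizer of $X_n$, one observes that if $|l(R)|=l$ then the orbit of $R$ produces a component of $Q$ with every $l$-element label, hence at least $\binom{k}{l}$ distinct components; for $2\le l\le k-2$ this exceeds $2g-2$ in the stated range. The remaining case $l=k-1$, and the ``unequal topological type'' possibility when $l=k$, are then eliminated by sharper Euler-characteristic estimates (each such $R$ has $\chi(R)\le 3-k$, and symmetry forces at least $k$ homeomorphic copies).
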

\begin{proof}
The proof is based on a computation of Euler characteristic. Let $R$ be a component of $Q$ with boundary not satisfying the statement. We label $R$ with the following subset of $X_n$:
\[
l(R)=\{\sigma_i\in X_n \ | \  \partial R\cap A_i\neq \emptyset.\}
\] Since $R$ does not satisfy the statement, either $|l(R)|\neq 1,k$ or $|l(R)|=k$ but  multicurves $\partial R \cap A_i$ do not have the same topological type. For $h\in \Mod(S_g)$ such that the action of $h$ by conjugation permutes $X_n$, we have that $h(R)$ is a component of $Q$ with boundary label $h(l(R))$.
\begin{itemize}
\item 
If $1< |l(R)|=l<k-1$, then we know that there are at least ${k \choose l}\ge {k \choose 2}$ different components of $Q$, as $X_n$ is totally symmetric. This contradicts the fact that the maximal number of components of  a partition of $S_g$ into subsurfaces is $2g-2$ and that $2g-2< {k \choose 2}$ in our range.
\item
If $|l(R)|=k-1$, we know that the Euler characteristic of $R$ satisfies
\[
\chi(R)\le 2-(k-1)=3-k,
\]
because $|\partial R|\ge k-1$. As $l(R)$ is a subset of a totally symmetric set, there are at least $k$ different components of $Q$ that are homeomorphic to $R$. Therefore, the Euler characteristic of $S_g$ satisfies
\[
2-2g= \chi(S_g)\le k(3-k).
\]
Again, in our range, this is not possible.
\item
Now we know that $|l(R)|=k$.  Let $T_i(R)$ be the topological type of $\partial R\cap A_i$. If $\{T_i(R)\}$ is not a singleton, then there exists a component $R'$ of $Q$ such that $\{T_i(R')\}$  is a permutation of $\{T_i(R)\}$. The number of such permutations is at least $k$. By a similar Euler characteristic computation as the previous two cases, we know that this is not possible, so all $A$ curves must have the same topological type.\qedhere
\end{itemize}
\end{proof}

Based on the above lemma, we call a component \emph{a small component} if its boundary only contains curves from a single $A_i$ and \emph{a big component} otherwise.
The following lemma describes the topological type of $A_i$.
\begin{lemma}\label{topologicaltype}
We have the following two possibilities:

(1) If one of the curves $a_i$ of $A_i$ is separating, then $a_i$ bounds a genus $1$ subsurface $H_i$. Furthermore, $A_i$ contains at most one separating curve.

(2) If $A_i$ contains no separating curves, then $A_i$ consists of a single non-separating curve.
\end{lemma}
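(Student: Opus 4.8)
\textbf{Proof proposal for Lemma \ref{topologicaltype}.}

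The plan is to use totally symmetric sets once again, but this time applied to a new totally symmetric set built from the surfaces cut off by the $A$-curves, together with Euler characteristic and homology bounds of the type already used in Lemma \ref{bigsmall} and in the Claim inside Lemma \ref{lem:curveaction}. First I would set up notation: since $X_n$ is totally symmetric and the labeling by $A$-curves is preserved, for any permutation of $\{\sigma_1,\sigma_3,\dots,\sigma_{2k-1}\}$ there is $h\in\Mod(S_g)$ realizing it by conjugation, and $h$ must carry $A_i$ to $A_{j}$ whenever the permutation sends $\sigma_i\mapsto\sigma_j$; in particular all the multicurves $A_i$ are pairwise homeomorphic (as multicurves in $S_g$), and any configuration-type invariant of $A_i$ is the same for all odd $i<n$. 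So it suffices to analyze $A:=A_1$ and then transport the conclusion.

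For part (1), suppose $a_1\in A_1$ is separating, cutting $S_g$ into two pieces. The key point is that, because the $A_i$ are disjoint and pairwise homeomorphic and there are $k=\floor{n/2}$ of them with $n-3\ge g$, there is very little room: I would argue that one of the two sides of $a_1$ must have small genus. More precisely, consider the subsurface $H_1$ on one side of $a_1$; by applying a mapping class realizing a permutation that moves $\sigma_1$, we get $k$ disjoint homeomorphic copies of $H_1$ inside $S_g$ (one can choose the side so that the copies stay disjoint, using that the $A_i$ are disjoint), so $k\cdot(\text{genus of }H_1)\le g\le n-3$, which with $k=\floor{n/2}$ forces the genus of $H_1$ to be $0$ or $1$. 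If it were $0$ then $a_1$ would be inessential or peripheral, contradicting $a_1\in\CRS$; hence $H_1$ has genus exactly $1$, i.e.\ $a_1$ bounds a one-holed torus. For the ``at most one separating curve'' statement, if $A_1$ contained two separating curves $a_1,a_1'$, then (using disjointness) they would cut off two disjoint one-holed tori, and again the totally symmetric family of $k$ copies would give $2k$ disjoint one-holed tori, forcing $g\ge 2k$, contradicting $g\le n-3$. (A small subtlety: $a_1$ and $a_1'$ might be ``nested'' rather than disjoint-sided; I would dispose of this by noting that a separating curve in $\CRS$ of a single $\rho_0(\sigma_i)$ together with the classification of type-A components from Lemma \ref{bigsmall} pins down how they sit, or more directly by observing that two disjoint separating curves each cutting off a genus-$1$ piece must cut off \emph{disjoint} genus-$1$ pieces unless one of the complementary regions has genus $0$, which is impossible.)

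For part (2), assume $A_1$ has no separating curve, so every curve in $A_1$ is non-separating. I need to show $|A_1|=1$. Suppose $|A_1|=m\ge 2$. Here the main tool is the homology argument already used in the Claim in Section \ref{sec:c}: the union $\bigcup_{i\text{ odd}<n}A_i$ is a totally symmetric multicurve (with label $\{\sigma_i\}$ on the curves of $A_i$), so if it has more than $g+1$ components then the components are pairwise homologous, and the maximal number of pairwise homologous disjoint curves on $S_g$ is $g-1$, a contradiction. The total number of components here is $m\cdot k$, and $mk\ge 2k>g+1$ in our range, so we would already be done — but I should be careful that the homology lemma as stated uses a single-label totally symmetric multicurve with \emph{more than $g+1$} components and concludes all of them are homologous, and then derives a contradiction from $g-1$; I would re-run exactly that argument with $n$ replaced by $mk$. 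The only thing to check is that the Claim's hypothesis ``a subset of at most $g+1$ curves bounds a subsurface'' still applies, which it does since any $g+1$ disjoint curves on $S_g$ are homologically dependent. Thus $m=1$.

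\textbf{Main obstacle.} The step I expect to require the most care is ruling out the ``nested separating curves'' configuration in part (1) and, relatedly, making precise the claim that the $k$ symmetric copies of $H_i$ can be chosen pairwise disjoint so that the genus bound $k\cdot g(H_i)\le g$ is valid. The disjointness of the multicurves $A_i$ gives disjointness of their \emph{curves}, but the subsurfaces they cut off could a priori be nested; resolving this cleanly will likely use Lemma \ref{bigsmall} (small versus big components) to understand which side of a separating $a_i$ is a ``small component,'' after which the genus accounting becomes straightforward.
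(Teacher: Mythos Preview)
Your treatment of statement~(1) is close in spirit to the paper's: both arguments produce $k$ ``copies'' of the small side $H_i$ and use a genus (equivalently Euler characteristic) count to force $g(H_i)=1$, and both finish the ``at most one separating curve'' claim by observing that two such curves would give $2k$ disjoint one-holed tori and hence $g\ge 2k$. The disjointness of the $H_i$ that you flag as the main obstacle is exactly what the paper establishes via Lemma~\ref{bigsmall}: it shows that every component of $Q$ inside $Y_1$ is a \emph{small} component, which prevents any $a_j$ with $j\ne 1$ from lying in $Y_1$ and hence rules out nesting. So for (1) you have the right idea and the right fix.

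Your argument for statement~(2), however, has a genuine gap. The homology Claim from the proof of Lemma~\ref{lem:curveaction} does \emph{not} apply to $\bigcup_i A_i$ when $|A_i|=m\ge 2$. In that Claim each curve carries its own distinct singleton label, so the total symmetry lets one transpose any two individual curves while fixing the rest; that is what produces $[c_1]=[c_{l+1}]$. In your situation all $m$ curves of $A_i$ share the single label $\{\sigma_i\}$, and the only symmetries available permute the blocks $A_1,A_3,\dots,A_{2k-1}$ as wholes. You cannot transpose one curve of $A_1$ with one curve of $A_3$ while fixing the other $2m-2$ curves in $A_1\cup A_3$, so the key step of the Claim's proof breaks down. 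That this is not a mere technicality is visible in the paper's own analysis: in the borderline configuration (their Case~1, Case~B) one genuinely has $|A_i|=2$ with all Euler-characteristic bounds saturated, and ruling it out requires introducing a \emph{second} totally symmetric set $\{\sigma_1,\dots,\sigma_{2k-3},\sigma_{2k}\}$ and a $\mathbb{Z}/2$-homology comparison between the two resulting boundary relations. The paper's proof of (2) is correspondingly more elaborate than your sketch: it splits on whether $Q$ has small components, runs Euler-characteristic estimates in each subcase to pin down the number and sizes of the big components, and then disposes of the surviving boundary case by the homology trick above. Your approach would need to be replaced by something along these lines.
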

\begin{proof} We will begin with the first statement.
\vskip 0.3cm
\noindent \emph{Statement (1).}\\
\indent We assume that $A_1$ contains separating curves. Let $a_1$ be a separating curve in $A_1$. Let $Y_1$ be the component of $S_g-a_1$ such that $g(Y_1)\le g(S_g-Y_1)$, where $g(-)$ denotes the genus. If $A_1$ contains more than one separating curve, we assume  in addition that $a_1\in A_1$ is the separating curve such that $g(Y_1)$ is maximal.

Let $R_1\subset Y_1$ be a component of $Q$ such that $a_1\in \partial R_1$. We claim that $R_1$ is a small component. Assume to the contrary that $R_1$ is a big component. By Lemma \ref{bigsmall}, the boundary of $R_1$ contains a separating curve $a_j$ of the same topological type in each $A_j$, for $1<j<n$ odd. We denote the smaller genus component of $S_g-a_j$ by $Y_j$.  However, since $a_j\subset Y_1$, we know that $Y_j$ is contained in $Y_1$, which is a contradiction because $a_j$ and $a_1$ have the same topological type, meaning that $Y_j$ and $Y_1$ have the same topological type. From the above argument, we also know that any component of $Q$ that is contained in $Y_1$ is a small component.

By total symmetry, we know that there exists a curve $a_i\in A_i$, for each $i < n$ odd, such that $a_i$ and $a_1$ have the same topological type. Let $Y_i$ be a component of $S_g-a_i$ with the same topological type as $Y_1$. Let $R$ be a big component of $Q$ whose boundary contains $a_1$. If $g(Y_1)>1$, then $\chi(Y_i)\le -3$. Since $R\cup \bigcup_{i \text{ odd}} Y_i\subset S_g$, the total Euler characteristic of $S_g$ satisfies 
\[
2-2g=\chi(S_g)\le \chi(R)+\sum_{i \text{ odd }<n}\chi(Y_i)\le 2-k+(-3k)=4-4k.
\]
However, by the assumption on $g$ and $n$, we also know that
\begin{equation}\label{eulernumber}
\chi(S_g)=2-2g\ge 2-2(n-3)\ge 2-2(2k+1-3)=6-4k.
\end{equation}
This is a contradiction. By the above discussion, if $A_i$ contains $2$ separating curves, then they both bound genus $1$ subsurfaces. Thus we know that $g\ge 2k$, which contradicts the fact $g\le n-3\le 2k+1-3=2k-2$.

\ \\ \emph{Statement (2).}\\
\indent If $A_i$ contains no separating curves but $A_i$ contains more than one curve, then we break the discussion into cases depending on the existence of small components of $Q$.

\begin{itemize}

\item \emph{Case 1: $Q$ does not contain a small component.}\\ 
In this case, every component is a big component with Euler characteristic at most $2-k$, since it has at least $k$ boundary components. Let $R_1,...,R_r$ be connected components of $Q$. Let $e_i$ be the number of boundary components of $R_i$. Thus the maximal number of big components $r$ is $4$ by Inequality \eqref{eulernumber}. Assume that $A_i$ contains $l$ curves. Then we have that
\[
e_1+...+e_r = 2lk,
\]
thus
\[
\chi(S_g)\le 2-e_1+...+2-e_r\le 2r-(e_1+...+e_r)\le 2r-2lk.
\]
By Inequality \eqref{eulernumber}, we know that $l\le 2$. Since $l\neq 1$, we know that $3 \le r\le 4$. 
\begin{itemize}
\medskip \item \textsc{Case A: $r=4$. }
\\ In this case, $|\partial R_i| = k$ for every $1\le i\le r$ because otherwise $|\partial R_i| \ge 2k$, which contradicts Inequality \eqref{eulernumber}. We claim that either $\partial R_2\cap \partial R_1=\emptyset$ or $\partial R_1=\partial R_2$.

If $\partial R_1$ and $\partial R_2$ have nontrivial intersection, by the action on the labels, we know there are at least $k$ components of $Q$, which is a contradiction. Without loss of generality, we assume $\partial R_1 = \partial R_2$. This means that $R_1\cup R_2$ is a closed surface. This is a contradiction.

\medskip \item \textsc{Case B: $r=3$.}
\\ In this case, we know that one of $e_i$ is $2k$. Assume that $e_1=2k$ and $e_2=e_3=k$. Let $A_i=\{a_i,a_i'\}$ for $i<n$ odd and the union of $A_i$ bound a subsurface $R_1$. The Euler characteristic is
\[
\chi(S_g)=2-2k+2-k+2-k=6-4k.
\]
By Inequality \eqref{eulernumber}, this only happens when $n=2k+1$ and $g=n-3$. In this case, we know that 
\[
X= \{\sigma_1,\sigma_3,...,\sigma_{2k-3},\sigma_{2k}\}\]
is another totally symmetric set of $B_n$. By the same argument, we know that there exists $A_{2k} = \{a_{2k}, a_{2k}'\}$ such that $\{A_1,A_3,..., A_{2k-3},A_{2k}\}$ satisfies the same property as $\{A_1,A_3,...,A_{2k-1}\}$. Let $[a_i], [a_i'] \in H_1(S_g;\mathbb{Z}/2)$ be the homology classes of curves $a_i,a_i'$. We use $\mathbb{Z}/2$-coefficients so that we do not need to consider orientation of curves. Since the boundary of $R_1$ is exactly the set of all the A-curves, we have 
\[
[a_1]+[a_1']+[a_3]+[a_3']+...+[a_{2k-1}]+[a_{2k-1}']=0
\]
and similarly, we have
\[
[a_1]+[a_1']+[a_3]+[a_3']+...+[a_{2k-3}]+[a_{2k-3}']+[a_{2k}]+[a_{2k}']=0.
\]
The subtraction of the above two equations gives
\[
[a_{2k-1}]+[a_{2k-1}']=[a_{2k}]+[a_{2k}'].
\]
By symmetry, we know that 
\[
[a_{1}]+[a_{1}']=[a_{2}]+[a_{2}']=[a_{3}]+[a_{3}']
\]
This is not possible since otherwise $\{a_1,a_1',a_3,a_3'\}$ separates the surface $S_g$, which contradicts Lemma \ref{bigsmall}.\qedhere

\end{itemize}

\medskip \item \emph{Case 2: $Q$ contains small components.}\\
We assume $Q_i$ is the union of small components of $Q$ with label $\{\sigma_i\}$. Then $|\partial Q_i| \ge 2$ because $A_i$ does not contain separating curves. If $|\partial Q_i|= 2$, then $Q_i$ is connected and the genus $g(Q_i)\ge 1$. Let $R_1,...,R_r$ be all the big components of $Q$. Each of $R_i$ has Euler characteristic at most $2-k$ since it has at least $k$ boundary components. Then the Euler characteristic of $S_g$ is at most $r(2-k)$. The Euler characteristic of $S_g$ also satisfies Inequality \eqref{eulernumber}, which implies that $r\le 4$. Let $e_i$ be the number of boundary components of $R_i$. 
\begin{itemize}
\medskip \item \textsc{Case A: $|\partial Q_i|=m > 2$.}
\\ In this case, we know that $e_1+...+e_r\ge mk$. Then the Euler characteristic satisfies
\[
\chi(S_g)\le \sum_{1\le j\le r} \chi(R_j)+\sum_{i<n\text{ odd}} \chi(Q_i)\le 2m-\sum_{1\le j\le r} e_j+k(2-m)\le 2m+(2-2m)k.
\]
We then know that $m=3$ and $r=3$ from Inequality \eqref{eulernumber}, so Inequality \eqref{eulernumber} becomes equality. Then $n = 2k+1$ and $g = n-3$. We apply the same proof as Case 1 Case B to conclude that this is not possible.

\medskip \item \textsc{Case B: $|\partial Q_i| = 2$ and $g(Q_i)\ge 1$.}
\\
In this case, we know that $e_1+...+e_r\ge 2k$. If $r\le 2$, then the Euler characteristic satisfies
\[
\chi(S_g)\le \sum_{1\le j\le r} \chi(R_j)+\sum_{i<n\text{ odd}} \chi(Q_i)\le 2r-\sum_{1\le j\le r} e_j-2k\le 4-2k-2k=4-4k,
\]
which contradicts  Inequality \eqref{eulernumber}. If $r>2$, then
\[
\chi(S_g)\le \sum_{1\le j\le r} \chi(R_j)+\sum_{i<n\text{ odd}} \chi(Q_i)\le r(2-k)-2k\le 4-2k-2k\le 3(2-k)-2k=6-5k,
\]
which also contradicts  Inequality \eqref{eulernumber}.
\end{itemize}

\end{itemize}
\end{proof}

\subsection{\boldmath Ruling out separating curves in $A_i$} In this subsection, we will rule out the existence of separating curves in $A_i$ by a homology argument. Notice that when $g=n-2$, we have McMullen's example, which satisfies that the A-curve is a single separating curve which bounds a genus $1$ subsurface \cite{McMullen}. Therefore, the bound on the genus is sharp in the following proof.

By Lemma \ref{topologicaltype}, a separating curve $a_i \in A_i$ divides the surface $S_g$ into a genus $1$ subsurface $Y_i$ and a genus $g-1$ subsurface. The proof has three steps. We first show that there is a component $S'$ of $S_g-M$ that contains $a_i$ for all $i$. Then we show that there is a finite order element $\tau$ that is equal to $\rho_0(\sigma_i)$ on the subsurface $S'$, for every $i$. Lastly, by transvecting $\rho_0$ with $\tau$ on $S'$, we use a homology argument to show that $Y_2,...,Y_{n-2}$ are independent subspaces, which contradicts the bound on $g$ and $n$.

We first discuss the relationship between C-curves and A-curves. Let $M$ be the set of $C$ curves and let $a_i$ be the unique separating curve in $A_i$.
\begin{lemma}\label{haha}
There is a component $S'$ of $S_g-M$ containing every $a_i$ for $1\le i<n$. Furthermore, $S'$ is $\rho_0(B_n)$-invariant.
\end{lemma}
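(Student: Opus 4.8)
\textbf{Proof proposal for Lemma \ref{haha}.}

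The plan is to first pin down which component of $S_g - M$ can possibly contain a given $a_i$, and then show that the same component works for all $i$ simultaneously, using the totally symmetric structure of $\{A_i\}$ together with the already-established invariance of $M$ under $\rho_0(B_n)$. First I would recall from the setup of this section that $\rho_0(\sigma_i)$ has $\CRS(\rho_0(\sigma_i)) = A_i \cup (\text{C-curves}) = A_i \cup M$; in particular $A_i$ is disjoint from $M$, so $a_i$ lies in some well-defined component of $S_g - M$. Call it $S_{(i)}$. Since $\sigma_i$ and $\sigma_j$ commute for distinct odd $i,j < n$, part (b) of the canonical reduction system lemma gives $i(a_i, a_j) = 0$ and $i(a_i, c) = 0$ for every C-curve $c$, so in fact $a_i$ and $a_j$ are disjoint from $M \cup A_j$; the point is to promote ``$a_i$ disjoint from $M$'' and ``$a_j$ disjoint from $M$'' to ``$a_i$ and $a_j$ lie in the \emph{same} component of $S_g - M$.''

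The key step is the following: by Lemma \ref{topologicaltype}(1), $a_i$ bounds a genus-one subsurface $H_i = Y_i$, and the complement of $M$ in which $a_i$ sits must ``see'' this genus. More precisely, let $S_{(i)}$ be the component of $S_g - M$ containing $a_i$. Because every curve of $M$ has the trivial label $X_n$, every element of $\rho_0(B_n)$ preserves $M$ (this is Step 1 / the claim in Section \ref{sec:c} and Section \ref{sec:b}), hence permutes the components of $S_g - M$; and because $X_n$ is totally symmetric, any permutation of $\{\sigma_1,\sigma_3,\dots,\sigma_{2k-1}\}$ is realized by conjugation, carrying $a_i$ (up to isotopy, as an element of $\CRS(\rho_0(\sigma_i))$) to $a_j$ and hence carrying $S_{(i)}$ to $S_{(j)}$. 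So all the $S_{(i)}$ are homeomorphic; denote the common topological type by $S'$ once we show they coincide. To show they actually coincide, I would argue by Euler characteristic and genus as in Lemma \ref{bigsmall}: if the components $S_{(i)}$ were pairwise distinct, then $S_g - M$ would contain $k$ disjoint homeomorphic subsurfaces each carrying the genus-one handle $H_i$, forcing $g \geq k$, which contradicts $g \le n-3 \le 2k-2 < $ (the relevant bound) — more carefully, each such $S_{(i)}$ has $\chi(S_{(i)}) \leq -1$ and contributes at least the handle bounded by $a_i$, and summing over the $k$ of them against Inequality \eqref{eulernumber} gives a contradiction in our range, exactly as in the ``small component'' analysis. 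Hence there is a single component $S'$ of $S_g - M$ with $a_i \subset S'$ for all odd $i < n$ (and then for all $i<n$, since each even-indexed $\sigma_i$ commutes with some odd-indexed one and the same curve-disjointness argument applies to place $a_i$ in $S'$ too).

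Finally, $S'$ is $\rho_0(B_n)$-invariant: $\rho_0(B_n)$ permutes the components of $S_g - M$, and $S'$ is characterized intrinsically — e.g.\ as the unique component of $S_g - M$ meeting $\CRS(\rho_0(\sigma_i))$ for every $i$, equivalently containing all the A-curves, a property manifestly preserved by any element that normalizes $\rho_0(B_n)$ and in particular by $\rho_0(B_n)$ itself. (If one worries that there could be \emph{several} components each meeting all the $A_i$, the genus/Euler-characteristic count above rules that out in our range, so $S'$ is genuinely unique and therefore fixed.) The main obstacle I anticipate is the middle step: carefully ruling out the possibility that the $a_i$ spread across several distinct components of $S_g - M$ — this is where one has to feed the genus-one conclusion of Lemma \ref{topologicaltype}(1) into an Euler-characteristic budget exactly matching Inequality \eqref{eulernumber}, and check that the inequality is strict in the stated range $n > 23$, $g \le n-3$.
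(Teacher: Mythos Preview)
Your overall architecture is right --- define $S_{(i)}$, use total symmetry to see the $S_{(i)}$ are all homeomorphic, and then derive a contradiction from Euler characteristic if they are pairwise distinct --- but the numerical step in the middle does not close, and this is not a cosmetic issue.

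With only the $k$ odd-indexed components $S_{(1)}, S_{(3)}, \dots, S_{(2k-1)}$, neither of your two counts gives a contradiction in the stated range. The genus count ``each $S_{(i)}$ carries the handle $Y_i$, so $g\ge k$'' is not a contradiction with $g\le n-3\le 2k-2$ (indeed $k\le 2k-2$ for all $k\ge 2$). The Euler-characteristic count ``$\chi(S_{(i)})\le -1$, sum over $k$'' yields only $\chi(S_g)\le -k$, while Inequality~\eqref{eulernumber} gives $\chi(S_g)\ge 6-4k$; these are compatible for every $k\ge 2$. (Separately, the assertion that $Y_i\subset S_{(i)}$ is not justified: C-curves in $M$ are disjoint from $a_i$ but could still lie inside the genus-one side $Y_i$, so $S_{(i)}$ need not swallow the whole handle.)

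The paper repairs exactly this gap by bringing in the \emph{even}-indexed $a_i$ \emph{before} running the count, so that one has $n-1$ pairwise distinct homeomorphic components rather than $k$. Then the argument splits: if $\chi(S_{(1)})\le -2$ one gets $\chi(S_g)\le -2(n-1)$, contradicting~\eqref{eulernumber}; if $\chi(S_{(1)})=-1$ then $S_{(1)}$ is a pair of pants (impossible, since it must contain the nonperipheral curve $a_1$) or a once-punctured torus (forcing $g\ge n-1$). You mention the even indices only as an afterthought after the odd case is ``done''; in fact they are what makes the contradiction go through. Your treatment of the $\rho_0(B_n)$-invariance of $S'$ at the end is fine once uniqueness is established.
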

\begin{proof}
Let $S(i)$ be the component of $S_g-M$ such that $a_i\in S(i)$. We claim that $S(1)=S(3)$, which by symmetry implies that $S':=S(1)=S(i)$ for all $1\le i<n$. If $S(1)\neq S(3)$, then by symmetry, we know that $S(1),S(3),...,S({2k-1})$ are pairwise different. Furthermore, we know $S(1),S(2),...,S({n-1})$ are pairwise different by symmetry. Then either $\chi(S(1))=-1$ or $\chi(S(1))\le -2$. If $\chi(S(i))\le -2$, we know that 
\[
\chi(S_g)\le \sum_{1\le i<n} \chi(S_i)\le -2(n-1)\le 2-2(2k)=2-4k,
\]
which contradicts Inequality \eqref{eulernumber}. Then we know that $\chi(S(1))=-1$, which implies that $S(1)$ is either a once-punctured torus or a pair of pants. 
If $S(1)$ is a once-punctured torus, then the genus $g\ge n-1$ because there are $n-1$ disjoint once-punctured subtori $S(1),S(2),...,S({n-1})$  in $S_g$. If $S(1)$ is a pair of pants, then $S(1)$ does not contain a nonperipheral curve $a_1$.

\end{proof}
Let $S'$ be the surface as in Lemma \ref{haha} and let \[
S=S'\cup \bigcup_{i <n}Y_{i}.
\]
 Let 
\[R= S-\bigcup_{i\text{ odd }<n}Y_{i},\] 
where
 \[
g(R)\le n-3-k\le 2k+1-3-k=k-2.\] Let $Y_i^c = S- Y_i$. We first show the following.
\begin{claim}
$A_i$ contains no curves inside $Y_i^c$.
\end{claim}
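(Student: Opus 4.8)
The claim is that, after removing the genus-one subsurface $Y_i$ bounded by the separating curve $a_i$, none of the other curves of $A_i$ lie in the complement $Y_i^c$. Since we are in the separating-curve case of Lemma~\ref{topologicaltype}, recall that $A_i$ contains exactly one separating curve (namely $a_i$), and all of its non-separating curves, if any, are non-separating in $S_g$. The plan is to assume for contradiction that $A_i$ contains a curve $b$ lying in $Y_i^c$, and then use total symmetry of $X_n$ together with an Euler-characteristic count — exactly the style of estimate already used throughout Section~\ref{sec:pf} — to produce more disjoint subsurfaces than $S_g$ can support.

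First I would set up the decomposition: $Y_i$ is a genus-one piece with single boundary $a_i$, so $Y_i^c = S - Y_i$ has genus $g-1$ and contains (by the contradiction hypothesis) at least one further curve $b \in A_i$. The curves in $A_i \setminus \{a_i\}$ are disjoint from $a_i$ (they form a multicurve) and are non-separating in $S_g$; since they are disjoint from $a_i$ and not separating, they must lie entirely in $Y_i^c$ — so the real content of the claim is that $A_i \setminus \{a_i\} = \emptyset$. Now apply total symmetry: for each odd $j < n$ there is $a_j \in A_j$ of the same topological type as $a_i$, bounding a genus-one subsurface $Y_j$, and correspondingly an extra non-separating curve $b_j \in A_j \setminus \{a_j\}$. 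Because $M$ (the C-curves, i.e.\ the trivially-labelled components) together with all the $A_j$ form a multicurve on $S_g$, all these curves are pairwise disjoint. The $k$ genus-one subsurfaces $Y_1, Y_3, \dots, Y_{2k-1}$ are pairwise disjoint (they sit inside the components $S(i)$, which by Lemma~\ref{haha} all equal a single component $S'$, but the $Y_i$ themselves are disjoint since the $a_i$ are disjoint and non-nested by the maximality choice in Lemma~\ref{topologicaltype}), contributing genus at least $k$; meanwhile each extra curve $b_j$ is non-separating, so cutting along all of them drops the Euler characteristic further. Concretely, $S_g$ would contain the disjoint union $\bigsqcup_{j \text{ odd}} Y_j$ of $k$ genus-one surfaces, each with one boundary component, plus the additional disjoint non-separating curves $b_j$; this forces $g \geq k + (\text{something positive})$, i.e.\ essentially $g \geq k+1$, contradicting $g \le n-3 \le 2k-2$ only once one is slightly more careful — so I would instead push the count through Euler characteristic of the ambient surface as in Inequality~\eqref{eulernumber}.

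The cleaner route, and the one I would actually write: let $Q' = S - \bigcup_{j \text{ odd} < n} \big(Y_j \cup (\text{annular neighborhoods of } b_j)\big)$ and estimate $\chi(S_g)$ from below by summing $\chi(Y_j) = -1$ over the $k$ values of $j$, plus the contribution of cutting along the $b_j$, plus $\chi$ of whatever big component of $Q$ carries all the $a_j$ on its boundary (which has at least $k$ boundary curves, hence $\chi \le 2-k$). This yields $2 - 2g = \chi(S_g) \le (2-k) + k\cdot(-1) + (\text{negative from the } b_j) \le 2 - 2k - 1$, i.e.\ $g \ge k + \tfrac12$, so $g \ge k+1$; combined with $g \le n - 3 \le 2k - 2$ this is fine for large $k$ only if $k \le 3$, contradicting $k \ge 13$. (The precise bookkeeping of which curves bound which subsurface is the fiddly part; one uses Lemma~\ref{bigsmall} to see that the big component whose boundary meets $A_j$ meets \emph{every} $A_j$, pinning down the $\chi \le 2-k$ term.)

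The main obstacle I anticipate is not the Euler-characteristic arithmetic — that is routine — but correctly organizing the disjointness and non-nesting of the subsurfaces $Y_j$ and the curves $b_j$ so that the contributions genuinely add up. In particular one must rule out the possibility that some $b_j$ is isotopic into a $Y_{j'}$ or is itself boundary-parallel in a piece of $Q$; this is where the hypothesis that $b_j \in A_j$ has label exactly $\{\sigma_j\}$ (so it is genuinely in the canonical reduction system of $\rho_0(\sigma_j)$ and of no other generator) together with total symmetry does the work, forcing the $b_j$ to be ``as independent as possible'' and thereby extracting the needed lower bound on $g$.
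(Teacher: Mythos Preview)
Your argument does not reach a contradiction. From your Euler-characteristic estimate you extract $g \ge k+1$, then write ``combined with $g \le n-3 \le 2k-2$ this is fine for large $k$ only if $k \le 3$, contradicting $k \ge 13$.'' But the implication is reversed: $k+1 \le 2k-2$ holds precisely when $k \ge 3$, so for every $k$ in the range of the theorem both bounds on $g$ are compatible and there is nothing to contradict. The ``negative from the $b_j$'' term is also illusory --- cutting along a simple closed curve does not change Euler characteristic --- and even if you arrange each $b_j$ to sit on the boundary of the big component (forcing $|\partial R_1| \ge 2k$, hence $\chi(R_1) \le 2-2k$), the resulting global inequality $2-2g \le (2-2k) + k(-1)$ only yields $g \ge 3k/2$, still well within $g \le 2k-2$. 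A count on all of $S_g$ is simply too coarse: the $k$ genus-one pieces $Y_j$ absorb most of the available genus and leave the inequality without bite.

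The paper avoids this by a two-step argument that localizes the count. First, Lemma~\ref{bigsmall} rules out curves of $A_1$ lying inside $Y_j$ for odd $j \ne 1$: any component of $Q$ inside $Y_j$ meeting both $A_1$ and $A_j$ would be big and hence have at least $k$ boundary curves, impossible inside a one-holed torus. Second --- and this is the step you are missing --- one works inside the central piece $R = S - \bigcup_{j \text{ odd}} Y_j$, which has genus at most $g-k \le k-2$. If $A_1$ (and by symmetry every $A_i$) contributes extra non-peripheral curves $A_i'$ in $R$, then a big component of $R - \bigcup_i A_i'$ must carry all the $a_i$ on its boundary together with curves from each $A_i'$, so has at least $2k$ boundary components; comparing this with the bound $g(R) \le k-2$ gives the contradiction. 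Localizing to $R$ strips out the $k$ units of genus coming from the $Y_j$, and that is exactly what makes the bound effective.

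One further correction: your claim that the non-separating curves in $A_i \setminus \{a_i\}$ ``must lie entirely in $Y_i^c$'' is false --- a non-separating curve of $S_g$ disjoint from $a_i$ can perfectly well lie in the genus-one piece $Y_i$. The claim does not assert $A_i = \{a_i\}$; it asserts only that any additional curves of $A_i$ are confined to $Y_i$.
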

\begin{proof}
Firstly by Lemma \ref{bigsmall}, we know that it is not possible for $A_1$ to contain any curve in $Y_j$ for $1<j<n$ odd. If $A_1$ contains a nontrivial subset $A_1'$ consisting of non-peripheral curves in $R$, then $A_i$ contains a nontrivial subset $A_i'$ consisting of non-peripheral curves in $R$ for $i<n$ odd by symmetry. Then 
\[R-\bigcup_{i\text{ odd }<n}A_i'\]
is a union of big components $R_1,...,R_r$. Without loss of generality, assume that $\partial R_1$ contains $a_i$ for every $i<n$ odd, where the existence follows from symmetry. Then the boundary of $R_1$ has at least $2k$ curves. The Euler characteristic of $R$ satisfies
\[
\chi(R)=\chi(R_1)+...+\chi(R_r)\le 2-2k
\]
However, we know that $\chi(R)=2-2g(R)\ge 2-2(k-2)=6-2k$, which is a contradiction.
\end{proof}

We now prove the main lemma about the behavior of $\rho_0(\sigma_i)$ on $Y_i^c$. Recall that $Y_i^c$ is the complement of $Y_i$ in $S$.
\begin{lemma}\label{transvection}
There exists a finite order element $\tau\in \Mod(S)$ such that
\begin{itemize}
\item $\tau$ commutes with $\rho_0(\sigma_i)|_S$ for every $i$, and
\item $\tau^{-1}\rho_0(\sigma_i)|_{S}$ is the identity on $Y_i^c$  for $0<i<n$
\end{itemize}
\end{lemma}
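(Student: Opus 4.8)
The plan is to build $\tau$ by cutting $S$ along the separating curves $a_i$ and analyzing the induced action on the pieces. First I would observe that, by the Claim just proved, for each odd $i<n$ the curve $a_i$ separates $S$ into the genus-one piece $Y_i$ and its complement $Y_i^c$, and all the $a_j$ for $j\neq i$ lie inside $Y_i^c$ (indeed inside $S'$, which by Lemma \ref{haha} contains every $a_j$). Consequently the multicurve $\{a_1,a_3,\dots,a_{2k-1}\}$ cuts $S$ into the $k$ tori-with-one-boundary $Y_1,Y_3,\dots,Y_{2k-1}$ together with the big component(s) $R$ of their complement. Since $\rho_0(\sigma_i)=T_{a_i}$-reducible-data tells us $\rho_0(\sigma_i)$ preserves $a_i$ (it is its canonical reduction curve), and $\rho_0(\sigma_i)$ commutes with $\rho_0(\sigma_j)$ hence preserves $a_j$ for all odd $j$, the whole multicurve $\bigcup_i a_i$ is $\rho_0(B_n)$-invariant, and by the same cyclic-action argument as in Lemma \ref{lem:curveaction} (and since each $Y_i$ is not homeomorphic to a subsurface of its complement in the relevant range) the commutator subgroup $B_n'$ fixes each $Y_i$ setwise. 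So $\rho_0(\sigma_i\sigma_j^{-1})|_S$ restricts to a mapping class of each $Y_j$ and of $R$.

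Next I would pin down $\rho_0(\sigma_i)|_{Y_j^c}$ up to finite order. For $j\neq i$ the curve $a_j$ is a canonical reduction curve of $\rho_0(\sigma_i)$ only when $j=i$; on $Y_i^c$ the restriction of $\rho_0(\sigma_i)$ has empty canonical reduction system away from peripheral curves, so it is periodic or pseudo-Anosov on the relevant complementary pieces. Running the same conjugate-and-commute trick used in Section \ref{sec:pA} and in Step 3 of Section \ref{sec:c}: $\rho_0(\sigma_1)$ and $\rho_0(\sigma_3)$ are $B_n'$-conjugate and commute, so after passing to a uniform power their restrictions to each complementary piece of $R$ either both become trivial (periodic case) or become equal (pseudo-Anosov case); the Euler-characteristic bound $g(R)\le k-2$ together with Inequality \eqref{eulernumber} forbids the pseudo-Anosov case from producing anything genuinely noncyclic, so in fact $\rho_0(\sigma_i\sigma_j^{-1})|_{R}$ has finite order, and more precisely there is a single finite-order $\tau_0\in\Mod(R\cup\text{(the non-}Y_i\text{ pieces)})$ with $\tau_0^{-1}\rho_0(\sigma_i)|_{Y_i^c}=\mathrm{id}$ for every $i$. (Here one uses that $Y_i^c$ is assembled from $R$ and the $Y_j$, $j\neq i$, all of which are permuted within a single $B_n'$-orbit class, so the "error" is the same element for every $i$.) I would then define $\tau\in\Mod(S)$ to be $\tau_0$ extended by the identity across the cutting curves $a_i$; extending a finite-order mapping class of a complementary subsurface over the boundary keeps it finite order.

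The two bullet points then follow: $\tau$ commutes with $\rho_0(\sigma_i)|_S$ because $\tau$ is supported on $Y_i^c$ (which $\rho_0(\sigma_i)$ preserves) and agrees there with a power of $\rho_0(\sigma_i)$ itself, while on $Y_i$ it is the identity and $\rho_0(\sigma_i)|_{Y_i}$ and $\tau$ have disjoint support; and $\tau^{-1}\rho_0(\sigma_i)|_S$ is the identity on $Y_i^c$ by construction of $\tau_0$. The main obstacle I expect is the bookkeeping that makes $\tau$ a \emph{single} element working simultaneously for all $i$: one must verify that the finite-order correction extracted on one complementary piece is genuinely forced to be the same (not merely conjugate) across all the pieces, which is where total symmetry of $\{\sigma_1,\sigma_3,\dots\}$ — giving the $\Mod(S)$-symmetry permuting the $Y_i$ — has to be invoked carefully, and where the precise genus bounds $n>23$, $g\le n-3$ are used to kill the pseudo-Anosov alternative and the large-boundary alternatives exactly as in Lemmas \ref{bigsmall} and \ref{topologicaltype}.
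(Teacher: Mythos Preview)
Your overall strategy---cut along the $a_i$, show the restrictions of the $\rho_0(\sigma_i)$ agree on the complement, then glue---matches the paper's, but the construction of $\tau$ contains a genuine error. You propose to take $\tau_0$ on the complementary pieces and ``extend by the identity across the cutting curves $a_i$,'' and then argue that ``on $Y_i$ it is the identity.'' This cannot work simultaneously for all $i$: if $\tau|_{Y_i}=\mathrm{id}$ for every $i$, then since $Y_j\subset Y_i^c$ for $j\neq i$, the requirement $\tau^{-1}\rho_0(\sigma_i)|_{Y_i^c}=\mathrm{id}$ forces $\rho_0(\sigma_i)|_{Y_j}=\tau|_{Y_j}=\mathrm{id}$ for all $j\neq i$, which has not been established and is generally false. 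The correct $\tau$ is \emph{not} the identity on any $Y_j$; rather, $\tau|_{Y_j}$ must equal the common value of $\rho_0(\sigma_i)|_{Y_j}$ for $i\neq j$. The paper obtains $\tau$ by a gluing argument: once one knows $\rho_0(\sigma_1)$ and $\rho_0(\sigma_3)$ agree on the overlap $Y_1^c\cap Y_3^c=S-Y_1-Y_3$, one glues the two restrictions to get a single element on all of $S$, checks it has finite order, and then shows (via the analogous $\tau_{35}$, $\tau_{ij}$) that the same element works for every $i$.

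A second gap: you only argue that $\rho_0(\sigma_i\sigma_j^{-1})|_R$ has \emph{finite order}, but what is needed is exact equality $\rho_0(\sigma_i)|_R=\rho_0(\sigma_j)|_R$. The paper gets this directly from Proposition~\ref{prop:mcgtorsion}: the $\rho_0(\sigma_i)|_R$ form a totally symmetric set of $k$ torsion elements in $\Mod(R)$, and since $g(R)\le k-2<2^{k-3}-1$ the set must collapse to a singleton. Then Claim~\ref{finiteorder} (a nontrivial finite-order class cannot be the identity on a subsurface) upgrades equality on $R$ to equality on all of $S-Y_1-Y_3$. Finally, ruling out the pseudo-Anosov alternative on $Y_1^c$ is simpler than you indicate: $\rho_0(\sigma_1)$ commutes with $\rho_0(\sigma_3)$ and hence preserves the essential curve $a_3\subset Y_1^c$, so it cannot be pseudo-Anosov there.
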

\begin{proof}
We break the proof into the following three steps: 

\p{\boldmath Step 1: We claim that  $\rho_0(\sigma_1)$ and $\rho_0(\sigma_3)$ are equal on $S-Y_1-Y_3$}\vskip 0.3cm

Since $\rho_0(\sigma_1)$ has empty canonical reduction system on $Y_1^c$, we know that $\rho_0(\sigma_1)$ is either periodic or pseudo-Anosov on $Y_1^c$. Since $\rho_0(\sigma_1)$ commutes with $\rho_0(\sigma_3)$, implying that $\rho_0(\sigma_1)$ preserves $a_3$, we know that $\rho_0(\sigma_1)$ on $Y_1^c$ cannot be pseudo-Anosov. Therefore the restrictions $\{\rho_0(\sigma_i)|_R\}$ are a totally symmetric set of torsion elements in $\Mod(R)$. This implies that  \[ \rho_0(\sigma_1)= \rho_0(\sigma_3)\text{ on $R$}\]
by Proposition \ref{prop:mcgtorsion}.

Then on $S-Y_1-Y_3$, the element $\phi = \rho_0(\sigma_1)\rho_0(\sigma_3)^{-1}$ is a finite order element which is the identity on the subsurface $R$. The following claim concludes that $\phi$ is the identity on $S-Y_1-Y_3$.

\begin{claim}\label{finiteorder}
A nontrivial finite order mapping class cannot be the identity on a subsurface.
\end{claim} 
\begin{proof}
Any nonidentity finite order mapping class $f$ of the surface $F$ has a geometric realization by isometry by Nielsen and Fenchel (see, e.g., \cite[Chapter 7]{FM}) and the realization is unique up to conjugacy (Ahlfors' trick). On any subsurface, the realization is not the identity because a nonidentity isometry cannot be identity on an open subset. The uniqueness of geometric realizations implies that $f$ cannot be the identity on any subsurface. 
\end{proof}

\p{\boldmath Step 2: There exists a unique finite order element $\tau\in \Mod(S)$ such that 
\[
\tau|_{Y_1^c}=\rho_0(\sigma_1)|_{Y_1^c}\text{ and }\tau|_{Y_3^c}=\rho_0(\sigma_3)|_{Y_3^c}.\]}\indent
We construct $\tau$ by geometric realizations of $\rho_0(\sigma_1)$ and $\rho_0(\sigma_3)$ that are equal on $S-Y_1-Y_3$. We construct the element $\tau$ from the following claim.
\begin{claim}
Let  $f\in\Mod(F)$ be a mapping class and $F'\subset F$ be a subsurface such that $f(F')=F'$. Let $\phi'$ be a homeomorphism of $F'$ preserving the boundary components of $F'$ as a set, so that $\phi'$ is homotopic to $f$ over $F'$. Then, there exists a homeomorphism $\phi$ of $F$ such that $\phi$ is homotopic to $f$ and $\phi|_{F'}=\phi'$.
\end{claim}
\begin{proof}
Firstly, we extend $\phi'$ to a map $\phi''$ defined on $F$ such that $\phi''$ is equal to $\phi'$ on $F'$ and $\phi''$ is homotopic to $f$ on $F-F'$. We now see that as a mapping class, $\phi''f^{-1}$ is identity on $F'$ and $F-F'$. Then as a mapping class, $\phi''f^{-1}$ is a multitwist about curves in $\partial F'$. Then by multiplying the corresponding multitwist supported on subannuli to $\phi''$ outside of $F'$, we obtain a homeomorphism $\phi$ that is homotopic to $f$ and equal to $\phi'$ on $F'$.
\end{proof}
The uniqueness follows from the next claim.
\begin{claim}
If $h\in \Mod(S)$ satisfies that $h|_{Y_1^c}=id$ and $h|_{Y_3^c}=id$, then $h=id$.
\end{claim}
\begin{proof}
We know that $\CRS(h)$ is empty from the assumption. As $h$ cannot be pseudo-Anosov, $h$ is finite order. By Claim \ref{finiteorder}, we know that $h$ is the identity element.
\end{proof}
We claim that $\tau$ is a finite order element. If not, we consider the canonical reduction system of $\tau$. By the definition of $\tau$ and the fact that $\rho_0(\sigma_1)|_{Y_1^c}$ and $\rho_0(\sigma_3)|_{Y_3^c}$ are finite order elements, we know that $\CRS(\tau)$ cannot contain any curve in both $Y_3^c$ and $Y_1^c$. We know that $\CRS(\tau)$ is empty but $\tau$ cannot be pseudo-Anosov. This implies that $\tau$ is a finite order element.

\p{\boldmath Step 3: $\tau$ also satisfies that 
\begin{equation}\label{=}
\tau|_{Y_i^c}=\rho_0(\sigma_i)|_{Y_i^c} \text{ for $i<n$}
\end{equation}}
\indent
Firstly, we prove that Equation \eqref{=} is true for $i$ odd. For $\sigma_3$ and $\sigma_5$, we define a similar element called $\tau_{35}$ as in Step 2. To prove the claim, we only need to show that $\tau=\tau_{35}$. 

By construction, we know that $\tau$ commutes with $\tau_{35}$. We also know that $\tau|_R=\tau_{13}|_R$. Then as a finite order element $\tau_{35}\tau^{-1}$ is the identity on $R$. This implies that $\tau=\tau_{35}$ by Claim \ref{finiteorder}. 

Similarly for any pair of numbers $2\le  i<j<n$ such that $j-i\ge 2$, we construct an element $\tau_{ij}$ that satisfies the condition in Step 2 for $\sigma_i,\sigma_j$. By the above argument, we have $\tau= \tau_{13}=\tau_{1(j+2)}=\tau_{j(j+2)}=\tau_{ij}$. This implies that $\tau$ satisfies Equation \eqref{=}.

\end{proof}

We now finish the proof of Theorem \ref{main1} in the case when $A_i$ contains separating curves.
Now, a transvection of the original $\rho_0|_S$ by $\tau$ gives  us a new homomorphism. We call the new homomorphism $\rho_0'$, with image in $\Mod(S)$. By Lemma \ref{transvection}, $\rho_0'$ satisfies $\rho_0'(\sigma_i)=id$ on $Y_i^c$. Let $\overline{Y_i}\subset H_1(S_g;\mathbb{Q})$ be the subspace spanned by curves in $Y_i$. Now the following claim implies that the dimension of $H_1(S_g;\mathbb{Q})$ is $2g$ but is also at least $2(n-2)$. We conclude that $g\ge n-2$, which is a contradiction.
\begin{claim}
$\overline{Y_2},...,\overline{Y_{n-1}}$ are independent subspaces.
\end{claim}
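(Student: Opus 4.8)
The plan is to show that the subsurfaces $Y_2, \dots, Y_{n-1}$ have homologically independent $H_1$-images by exploiting the fact that each $\rho_0'(\sigma_i)$ acts as the identity on $Y_i^c$, combined with the braid relations and a symmetry argument. First I would record the key structural facts: each $Y_i$ has genus $1$, so $\dim_{\mathbb{Q}} \overline{Y_i} = 2$; the $Y_i$ are pairwise disjoint (they sit inside disjoint components determined by the separating curves $a_i$, or at least are contained in the common component $S'$ with disjoint interiors as established in Lemma \ref{haha} and the subsequent claim); and $\rho_0'(\sigma_i)$ restricts to a mapping class supported on $Y_i$, hence acts on $H_1(S_g;\mathbb{Q})$ by a transformation that is the identity on the subspace spanned by curves disjoint from $Y_i$.

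The heart of the argument is a homology computation using the braid relation $\sigma_i \sigma_{i+1} \sigma_i = \sigma_{i+1} \sigma_i \sigma_{i+1}$ together with the commuting relations. I would argue by contradiction: suppose $\sum_i v_i = 0$ is a nontrivial relation among vectors $v_i \in \overline{Y_i}$ (with $i$ ranging over the even indices $2,\dots,n-1$, or more conveniently over all indices using total symmetry). Applying $\rho_0'(\sigma_j)_*$ — the induced map on $H_1$ — fixes every $v_i$ with $i \neq j$ since those $Y_i$ lie in $Y_j^c$, and sends $v_j$ to $\rho_0'(\sigma_j)_*(v_j)$. Subtracting the two relations shows $\rho_0'(\sigma_j)_*(v_j) = v_j$ for each $j$, i.e. each $v_j$ is fixed by the action of $\rho_0'(\sigma_j)$ on its own $Y_j$. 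One then uses total symmetry of $X_n$ (and the action of $B_n'$ permuting the $A_i$, hence the $Y_i$) to deduce that the $v_i$ are all "the same" up to the symmetry, forcing either all $v_i = 0$ or a contradiction with how the $Y_i$ are distributed — the same flavor of argument as the homology arguments in the Claim inside Lemma \ref{lem:curveaction} and in Case B of Lemma \ref{topologicaltype}. Concretely, a dependency among the $\overline{Y_i}$ of low rank would force, via the $\mathbb{Z}/2$ (or $\mathbb{Q}$) homology classes of the boundary curves $a_i$, that some proper sub-collection of the $a_i$ separates $S_g$, contradicting Lemma \ref{bigsmall}.

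Once independence is established, the count is immediate: $2g = \dim_{\mathbb{Q}} H_1(S_g;\mathbb{Q}) \geq \sum_{i=2}^{n-1} \dim \overline{Y_i} = 2(n-2)$, so $g \geq n-2$, contradicting the standing hypothesis $g \leq n-3$. This closes Case (5) when $A_i$ contains a separating curve.

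I expect the main obstacle to be making precise the step "each $v_j$ is fixed by $\rho_0'(\sigma_j)$, therefore the $v_j$ cannot be simultaneously nonzero and satisfy a relation." The subtlety is that a genus-$1$ mapping class can certainly fix nonzero homology vectors, so fixedness alone is not enough; one must feed in the symmetry (every permutation of the $\sigma_i$ is realized by conjugation in $B_n$, which moves $Y_i \leftrightarrow Y_j$ compatibly) to pin down the $v_i$ relative to one another, and then invoke the geometric constraint from Lemma \ref{bigsmall} that no proper subset of the A-curves bounds. Care is also needed because the $Y_i$ may not be genuinely disjoint as subsurfaces of $S_g$ unless one first passes to $S$ and uses the claim that $A_i$ has no curves in $Y_i^c$; I would handle this by working throughout inside $S$, where Lemma \ref{transvection} guarantees $\rho_0'(\sigma_i)$ is supported on $Y_i$ exactly.
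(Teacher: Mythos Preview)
Your proposal has a genuine gap rooted in a misreading of the setup. The subsurfaces $Y_i$ are \emph{not} pairwise disjoint: only the odd-indexed ones (coming from the commuting set $X_n$) are. For adjacent indices, $\sigma_i$ and $\sigma_{i+1}$ do not commute, and there is no reason for $Y_i$ and $Y_{i+1}$ to be disjoint; indeed if all the $Y_i$ were disjoint the independence would be automatic from the intersection form, and there would be nothing to prove. This invalidates your claim that ``$\rho_0'(\sigma_j)_*$ fixes every $v_i$ with $i\neq j$ since those $Y_i$ lie in $Y_j^c$'': for $i=j\pm 1$ this is false. Even restricting to non-adjacent indices, the conclusion you reach---that each $v_j$ is fixed by $\rho_0'(\sigma_j)$ acting on its own $\overline{Y_j}$---is, as you yourself note, not a contradiction, and your suggested repair via total symmetry and Lemma~\ref{bigsmall} does not close the gap. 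In particular the fallback idea of using the homology classes of the $a_i$ is a dead end here: in this subsection each $a_i$ is a \emph{separating} curve, so $[a_i]=0$ in $H_1$ and no separation argument is available.

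The paper's move is the one you are missing: take $k$ to be the smallest index with $x_k\neq 0$ (so $k\geq 2$), and apply $\rho_0'(\sigma_{k-1})$, an index \emph{not} appearing in the relation. For $j\geq k+1$ the generators $\sigma_j$ and $\sigma_{k-1}$ commute, so $Y_j\subset Y_{k-1}^c$ and $x_j$ is fixed; but $\rho_0'(\sigma_{k-1})$ carries $\overline{Y_k}$ to the homology subspace of the genus-one piece associated to the conjugate $\sigma_{k-1}\sigma_k\sigma_{k-1}^{-1}$, a subspace a priori different from $\overline{Y_k}$. Subtraction forces $x_k$ into the intersection of these two subspaces; the braid relation then converts this into $\overline{Y_{k-1}}\cap\overline{Y_k}\neq\{0\}$, and by symmetry $\overline{Y_1}\cap\overline{Y_2}\neq\{0\}$ and $\overline{Y_2}\cap\overline{Y_3}\neq\{0\}$. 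Since $\overline{Y_1}$ and $\overline{Y_3}$ are orthogonal for the intersection pairing while $\overline{Y_2}$ is a $2$-dimensional symplectic subspace, this is a contradiction. The essential idea you lack is to hit the relation with a generator adjacent to, but outside, the support of the relation, so that the relevant $\overline{Y_k}$ is genuinely displaced rather than merely stabilized.
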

\begin{proof}
Assume that there is a relation between them:
\begin{equation}\label{E1}
x_k+\cdots+x_{n-1}=0
\end{equation}
such that $x_i\in \overline{Y_i}$, $k>1$ and $x_k\neq 0$. We apply $\rho_0'(\sigma_{k-1})$ on the above relation, we obtain a new relation
\begin{equation}\label{E2}
\rho_0'(\sigma_{k-1})(x_{k})+...+x_{n-1}=0
\end{equation}
By subtracting the two equations \eqref{E1}, \eqref{E2}, we obtain
\[
x_k-\rho_0'(\sigma_{k-1})(x_k)=0
\]
Since $\sigma_{k-1}\sigma_k\sigma_{k-1}^{-1}$ is conjugate to $\sigma_k$, we know that $\rho_0(\sigma_{k-1})(Y_k)$ is the genus $1$ subsurface corresponding to the element $\sigma_{k-1}\sigma_k\sigma_{k-1}^{-1}$.
If 
\begin{equation}\label{int}
\rho_0'(\sigma_{k-1})(\overline{Y_{k}})\cap \overline{Y_k}\neq \{0\}, \end{equation}
by the braid relation
\[
\sigma_k (\sigma_{k-1} \sigma_k \sigma_{k-1}^{-1}) \sigma_k^{-1} = \sigma_{k-1} \text{ and } \sigma_k \sigma_k \sigma_k^{-1}= \sigma_k,
\]
we know that 
\[
\rho_0'(\sigma_k)\rho_0'(\sigma_{k-1})(\overline{Y_{k}}) = \overline{Y_{k-1}}\text{ and } \rho_0'(\sigma_k)(\overline{Y_k}) = \overline{Y_k}
\]
By applying $\rho_0'(\sigma_k)$ to Equation \eqref{int}, we obtain
\[
\overline{Y_{k-1}} \cap\overline{Y_k}\neq \{0\} \]
By symmetry, we know that 
\[
\overline{Y_1} \cap\overline{Y_2}\neq \{0\} \text{ and } \overline{Y_{2}} \cap\overline{Y_3}\neq \{0\}
\]
However, nontrivial elements in $\overline{Y_1}$ and $\overline{Y_3}$ are independent and $\overline{Y_2}$ is 2-dimensional, we know that \[\overline{Y_2}\subset \overline{Y_1}+ \overline{Y_3}\]
This is a contradiction because two elements in $\overline{Y_1},\overline{Y_3}$ have zero intersection number but $\overline{Y_2}$ has nontrivial intersection pairing.
\end{proof}

\subsection{$A_i$ consists of a single non-separating curve}
In this case, we will prove that $\rho$ is a transvection of the standard homomorphism. The proof has three steps. We first show that the canonical reduction system of $\rho(X_n)$ only has A-curves and C-curves, and the A-curve is a single non-separating curve. Secondly, we show that there is an element $\tau$ in $\PMod(S_{g,p})$ such that $\tau^{-1}\rho(\sigma_i)=T_{a_i}^k$ for some $k$ and every $i$, where $T_{a_i}$ is the Dehn twist about the curve $a_i$. Thirdly, we conclude that $k$ is $\pm 1$ by the braid relation.

To study $\rho$, we need to analyze $\CRS(\rho(X_n))$. We classify curves of $\CRS(\rho(\sigma_i))$ by looking at their representative in $\CRS(\rho_0(\sigma_i))$. The canonical reduction system $\CRS(\rho_0(\sigma_i))$ can be obtained from $\CRS(\rho(\sigma_i))$ by forgetting punctures on the underlying surface, identifying isotopic curves, and deleting peripheral curves. We call a curve $c_0$ in $\CRS(\rho_0(\sigma_i))$ the \emph{forgetful curve} of a curve $c \in \CRS(\rho(\sigma_i))$ if $c_0$ is the corresponding curve on $S_g$ obtained by forgetting punctures.

The following lemma describes the relationship between curves in $\CRS(\rho_0(\sigma_i))$ and $\CRS(\rho(\sigma_i))$. Recall from Propositions \ref{prop:abc} and \ref{prop:b} that a curve in $\CRS(\rho_0(X_n))$ must be either of type $A$ or type $C$.
\begin{lemma}
Let $c_0 \in \CRS(\rho_0(\sigma_i))$ be the forgetful curve of  $c \in \CRS(\rho(\sigma_i))$.
\begin{enumerate}
\item If $c_0$ is a type A curve, $c$ is also a type A curve in $\CRS(\rho_0(X_n))$. 
\item If $c_0$ is peripheral, then $c$ is a type C curve in $\CRS(\rho_0(X_n))$.
\item If $c_0$ is a type C curve, then $c$ is a type C curve in $\CRS(\rho_0(X_n))$.
\end{enumerate}
\end{lemma}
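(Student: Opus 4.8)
The plan is to analyze how canonical reduction systems behave under the puncture-forgetting map $\PMod(S_{g,p}) \to \Mod(S_g)$. The general principle is that $\CRS(\rho_0(\sigma_i))$ is obtained from $\CRS(\rho(\sigma_i))$ by: forgetting the $p$ punctures, discarding curves that become peripheral (i.e., bound a once-punctured disk or are homotopic to a puncture), discarding curves that become null-homotopic, and identifying curves that become isotopic. Conversely, each $c_0 \in \CRS(\rho_0(\sigma_i))$ lifts to at least one $c \in \CRS(\rho(\sigma_i))$, its forgetful curve being $c_0$. The key structural input is the total symmetry of $X_n$: since permutations of $X_n$ are realized by conjugation in $B_n$, they are realized by $\rho(B_n)$-conjugation in $\PMod(S_{g,p})$, and these conjugations permute the components of $\CRS(\rho(X_n))$ compatibly with how the corresponding conjugations in $\Mod(S_g)$ permute $\CRS(\rho_0(X_n))$ (forgetting punctures is equivariant). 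So the label of $c$ in the $X_n$-labeled multicurve $\CRS(\rho(X_n))$ is controlled by the label of $c_0$ in $\CRS(\rho_0(X_n))$.

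\textbf{Step-by-step plan.} First I would establish the general observation above, making precise the claim that the label of $c \in \CRS(\rho(\sigma_i))$ in $\CRS(\rho(X_n))$ contains the label of its forgetful curve $c_0$ (when $c_0$ is nonperipheral): if $\sigma_j$ appears in the label of $c_0$, meaning $c_0 \in \CRS(\rho_0(\sigma_j))$, I claim $c \in \CRS(\rho(\sigma_j))$. This is because $\rho(\sigma_i)$ and $\rho(\sigma_j)$ commute (for $\sigma_i, \sigma_j \in X_n$), so their canonical reduction systems are disjoint and $\rho(\sigma_j)$ preserves $c$; combined with the fact that $c_0 \in \CRS(\rho_0(\sigma_j))$ one pushes the reduction-system membership up. For statement (1): if $c_0$ is type A, its label in $\CRS(\rho_0(X_n))$ is $\{\sigma_i\}$, so by the observation the label of $c$ is contained in $\{\sigma_i\}$ and contains $\sigma_i$ (as $c \in \CRS(\rho(\sigma_i))$), hence equals $\{\sigma_i\}$; thus $c$ is type A. For statement (2): if $c_0$ is peripheral it carries no information about $\CRS(\rho_0(X_n))$ directly, so here I would argue differently — a curve $c$ whose forgetful image is peripheral is, up to isotopy on $S_{g,p}$, a curve surrounding some subset of punctures together with no genus; I would use total symmetry to argue such a $c$ must be preserved by all the conjugations permuting $X_n$, forcing its label to be all of $X_n$, i.e., type C. For statement (3): if $c_0$ is type C, its label is $X_n$, so by the observation $c \in \CRS(\rho(\sigma_j))$ for every $\sigma_j \in X_n$, making $c$ type C.

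\textbf{Main obstacle.} The delicate case is statement (2), where the forgetful curve is peripheral and hence the lift-to-$\Mod(S_g)$ argument gives nothing. Here I expect the work to be in showing that a curve $c \in \CRS(\rho(\sigma_i))$ bounding a once-punctured disk (or more generally a planar region containing only punctures) on $S_{g,p}$ must be fixed — not merely permuted — by every mapping class realizing a permutation of $X_n$ under $\rho(B_n)$-conjugation; equivalently, that its label is the full set $X_n$. The point is that distinct type-A or type-I behavior would require $|X_n|$ or more such peripheral curves to coexist disjointly and be permuted transitively, but the configuration of curves surrounding subsets of the $p$ fixed punctures is rigidly constrained: a mapping class in $\PMod$ fixes each puncture, so it cannot genuinely permute curves that are distinguished by which punctures they enclose. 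This puncture-fixing rigidity is what forces such $c$ into type C. One should also be slightly careful about the identification step (two curves becoming isotopic after forgetting punctures), but since in the relevant range all of $\CRS(\rho_0(X_n))$ is type A or C with the multicurve structure already pinned down, this causes no genuine trouble.
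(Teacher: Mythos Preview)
Your general observation has the containment reversed, and this reversal hides a genuine gap in part (3). You state that the label of $c$ \emph{contains} the label of $c_0$, arguing that $c_0 \in \CRS(\rho_0(\sigma_j))$ forces $c \in \CRS(\rho(\sigma_j))$; but the only thing that is automatic is the opposite inclusion: if $c \in \CRS(\rho(\sigma_j))$ and its forgetful image $c_0$ is essential, then $c_0 \in \CRS(\rho_0(\sigma_j))$, so $l(c) \subseteq l(c_0)$. That inclusion is exactly what is needed for (1), and indeed in your argument for (1) you silently switch to it (``the label of $c$ is contained in $\{\sigma_i\}$''), so (1) is fine once the direction is corrected. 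But for (3) you really need the direction you originally stated, and that direction is false: if $c_0$ is type C, there may be a curve $c' \neq c$ in $\CRS(\rho(X_n))$ with the same forgetful image $c_0$, and it may be $c'$ rather than $c$ that lies in $\CRS(\rho(\sigma_j))$. Your justification (``$\rho(\sigma_j)$ preserves $c$ \dots one pushes the reduction-system membership up'') does not rule this out; preserving a curve is far from having it in one's canonical reduction system.

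The paper's proof of (3) confronts this possibility head-on. If $c$ is not type C while $c_0$ is, one produces such a $c'$; since $c$ and $c'$ become isotopic after forgetting punctures, they cobound a punctured annulus $\mathcal{A}(c,c')$ in $S_{g,p}$. One then chooses $h_1, h_2 \in B_n$ stabilizing $X_n$ with $l(c), h_1(l(c)), h_2(l(c))$ pairwise distinct (possible since $|l(c)|<k$), so $c, \rho(h_1)(c), \rho(h_2)(c)$ are three distinct curves. But elements of $\PMod(S_{g,p})$ fix punctures pointwise, which forces $\rho(h_i)(\mathcal{A}(c,c')) = \mathcal{A}(c,c')$, hence $\rho(h_1)(c) = \rho(h_2)(c) = c'$, a contradiction. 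This is the same puncture-fixing mechanism you correctly identified for (2); the point you missed is that (3) needs it too, precisely to handle the ambiguity of multiple lifts of $c_0$.
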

\begin{proof}\ 
\begin{enumerate}
\item If $c_0$ is not a type $A$ curve, then $c$ would be in $\CRS(\rho(\sigma_i))$ for more than one $i$, contradicting the fact that $c$ is a type $A$ curve.

\item If $c$ is not a type $C$ curve, then $|l(c)|\neq k$. For an element $h\in B_n$ such that $h(B_n)h^{-1}=B_n$, we know that  $\rho_0(h)(c)$ has label $h(l(c))$. If $h(l(c))\neq  l(c)$, then $\rho_0(h)(c)$ and $c$ are disjoint. This is not possible because elements in $\PMod(S_{g,p})$ do not permute punctures.

\item If $c$ is not a type C curve, then $|l(c)|\neq k$. Since $c_0$ is a type C curve, there is a curve $c'\neq c_0\in \CRS(\rho_0(X_n))$ such that $c_0$ and $c'$ are isotopic. Since $c,c'$ are isotopic after forgetting punctures, we know that $S_{g,p}-c-c'$ has a component that is a punctured annulus $\mathcal{A}(c,c')$. Since $c$ is not a type C curve, there exist at least two elements $h_1,h_2\in B_n$ such that $h_i(X_n)=X_n$ and $l(c), h_1(l(c)),h_2(l(c))$ are pairwise different. 

\noindent Since $c, \rho(h_1)(c)$, and $\rho(h_2)(c)$ have different labels, we know that they are pairwise different. We know then either that $\rho(h)(\mathcal{A}({c,c'}))$ is disjoint from $\mathcal{A}({c,c'})$ or that $\rho(h_i)(\mathcal{A}({c,c'}))=\mathcal{A}({c,c'})$. However, elements in $\PMod(S_{g,p})$ do not permute punctures. Thus $\rho(h_i)(\mathcal{A}({c,c'}))=\mathcal{A}({c,c'})$. That means $\rho(h_1)(c)=\rho(h_2)(c)=c'$, which is a contradiction.\qedhere
\end{enumerate}
\end{proof}

The above lemma implies that curves in $\CRS(\rho(X_n))$ are either type A or type C. We now prove the following claim about the behavior of $\rho(\sigma_i)$. Let $A_{i}$ be the set of $A$ curves with label $\sigma_i$ in $\CRS(\rho(X_n))$. 
\begin{lemma}
The set $A_{i}$ consists of a single non-separating curve.
\end{lemma}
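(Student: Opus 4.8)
The plan is to leverage the work already done for $\rho_0$ in the separating and non-separating cases and transfer it through the forgetful map, using the structural lemma just proven (curves in $\CRS(\rho(X_n))$ are of type A or C). First I would recall that the case analysis of Subsection 10.1, applied to $\rho$ rather than $\rho_0$, shows that $A_i$ (the set of type-A curves in $\CRS(\rho(X_n))$ with label $\sigma_i$) is either a singleton consisting of a non-separating curve, or contains a separating curve bounding a genus-$1$ subsurface — this is just Lemma \ref{topologicaltype} re-read on the punctured surface $S_{g,p}$, since all the Euler-characteristic estimates only become more favorable after capping punctures, and the forgetful curves of type-A curves are type-A curves on $S_g$ by the preceding lemma. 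So the task reduces to ruling out the possibility that some $A_i$ contains a separating curve.

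The main work, then, is to exclude separating curves in $A_i$ at the level of $\rho$. The idea is to run the same three-step argument as in Subsection 10.2: produce a $\rho(B_n)$-invariant component $S'$ of $S_{g,p}$ cut along the type-C curves that contains every $a_i$ (the analogue of Lemma \ref{haha}, where the forgetful-curve dictionary guarantees the peripheral/type-C curves on $S_g$ lift to type-C curves on $S_{g,p}$ so $S'$ is well-defined), then construct a finite-order element $\tau \in \PMod(S_{g,p})$ commuting with all $\rho(\sigma_i)$ and agreeing with $\rho(\sigma_i)$ off the genus-$1$ piece $Y_i$ (the analogue of Lemma \ref{transvection}; here one must check that Proposition \ref{prop:mcgtorsion} and Claim \ref{finiteorder} still apply, which they do since $\PMod(S_{g,p})$ has the same finite-subgroup bounds and finite-order elements are still geometrically realized). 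Finally, transvecting by $\tau$ and running the homology argument in $H_1(S_g;\mathbb{Q})$ — which only sees the underlying closed surface — gives $g \ge n-2$, the same contradiction. Crucially none of these steps genuinely requires the closed surface; they only used that $\rho(\sigma_i)$ acts with empty canonical reduction system on $Y_i^c$ and preserves the $a_j$, both of which hold on $S_{g,p}$.

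Having eliminated separating curves, each $A_i$ is a single non-separating curve; it remains only to observe this is consistent (there is no further obstruction to record here, since the statement of the lemma is exactly this conclusion). The step I expect to be the main obstacle is the bookkeeping in the analogue of Lemma \ref{haha}: one must be careful that after forgetting punctures, isotopic-or-peripheral curves on $S_g$ could in principle come from non-isotopic curves on $S_{g,p}$, so the ``component $S'$ containing every $a_i$'' must be identified on the punctured surface directly, using that $\PMod(S_{g,p})$ does not permute punctures (exactly the mechanism used repeatedly in the preceding lemma) to pin down $S'$ as $\rho(B_n)$-invariant. Once that invariance is secure, the Euler-characteristic inequalities and the homology computation go through verbatim, with the genus bound $g \le n-3$ again providing the contradiction $g \ge n-2$.
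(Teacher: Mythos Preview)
Your proposal misreads the logical position of this lemma and, as a result, misses its actual content. By the time we reach Subsection 10.3, the separating case has \emph{already} been eliminated at the level of $\rho_0$: we are in the subcase where the $\rho_0$-version of $A_i$ is a single non-separating curve. Combined with the preceding lemma (type-A curves in $\CRS(\rho(X_n))$ have type-A forgetful curves, and peripheral forgetful curves force type C), every curve in the $\rho$-version of $A_i$ has \emph{the same} forgetful curve, namely that one non-separating curve on $S_g$. So there is nothing to re-prove about separating curves; the only thing left is to rule out the possibility that $A_i$ contains several curves on $S_{g,p}$ that all become isotopic after filling in the punctures. Your plan never addresses this scenario.

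The paper handles exactly this in three lines: two such curves would cobound a punctured annulus $\mathcal{A}$; by total symmetry there is $h\in\rho(B_n)$ carrying $A_1$ to $A_3$, hence carrying $\mathcal{A}$ to a disjoint punctured annulus; but $\PMod(S_{g,p})$ fixes every puncture, a contradiction. Your attempt instead proposes to rerun Lemmas \ref{bigsmall}--\ref{topologicaltype} and all of Subsection 10.2 directly on $S_{g,p}$. That program is not only unnecessary, it is unsound as written: the contradictions in those lemmas come from comparing configuration-derived upper bounds on $\chi$ against the lower bound $\chi(S_g)=2-2g\ge 6-4k$ from $g\le n-3$. On $S_{g,p}$ the lower bound becomes $2-2g-p$, and $p$ is completely unconstrained in Theorem \ref{main1}, so Inequality \eqref{eulernumber} simply fails. (Your remark that the estimates ``only become more favorable after capping punctures'' points the wrong way: capping punctures is precisely returning to $\rho_0$, which is what the paper does.) Moreover, the very configuration you need to exclude---many parallel copies of one non-separating curve separated by punctures---is invisible to genus-based Euler-characteristic counts and is exactly what the punctured-annulus argument is designed to kill.
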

\begin{proof}
The forgetful curve of $A_{i}$ must be a single non-separating curve. If $A_{i}$ contains more than one curve, then elements in $A_{i}$ are all isotopic to each other after forgetting punctures. Let $c_1,c_1'$ be two different curves in $A_{1}$. Since $c_1,c_1'$ are isotopic after forgetting punctures, there exists a punctured annulus $A(c_1,c_1')$ with boundary $c_1,c_1'$. By symmetry, there is $h\in \rho(B_n)$ such that $h(A_{1})=A_{3}$. Then, we know that $h(A(c_1,c_1'))$ is disjoint from $A(c_1,c_1')$. This is not possible because elements in $\PMod(S_{g,p})$ fix punctures pointwise.
\end{proof}

We now discuss the relationship between type C curves and type A curves of $\CRS(\rho(X_n))$. Let $M$ be the set of $C$ curves of $\CRS(\rho(X_n))$ and let $a_i$ be the single non-separating curve in $A_{i}$.
\begin{lemma}\label{haha2}
There is a component $S$ of $S_{g,p}-M$ that contains every $a_i$ for $1\le i<n$. Furthermore, $S$ is $\rho(B_n)$-invariant.
\end{lemma}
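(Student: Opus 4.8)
The plan is to deduce Lemma~\ref{haha2} from its closed-surface counterpart, Lemma~\ref{haha}, by pushing everything through the map $F\colon S_{g,p}\to S_g$ that fills in the punctures, so that the Euler-characteristic estimates are carried out once on $S_g$ and the punctures are handled by a short separate argument. Write $\bar a_i=F(a_i)$ for the forgetful curves; by the lemma preceding Lemma~\ref{haha2} together with Lemma~\ref{topologicaltype}(2), each $\bar a_i$ is a single non-separating curve on $S_g$ and is the type-$A$ curve with label $\{\sigma_i\}$ of $\CRS(\rho_0(\sigma_i))$. Write $\bar M$ for the set of type-$C$ curves of $\CRS(\rho_0(X_n))$, and split $M=M_0\sqcup M_1$ according to whether the forgetful curve of a member of $M$ is a genuine type-$C$ curve of $\CRS(\rho_0(X_n))$ or is null-homotopic on $S_g$; this is precisely the dichotomy of parts (2) and (3) of the lemma just before Lemma~\ref{haha2}, so $F(M_0)=\bar M$ and each curve of $M_1$ bounds a disk-with-punctures in $S_{g,p}$.

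First I would produce the component $S$. Since $F$ is a homeomorphism away from the punctures, it induces a bijection between the components of $S_{g,p}\setminus M_0$ and the components of $S_g\setminus\bar M$; let $\widehat{S}$ be the component of $S_{g,p}\setminus M_0$ mapping onto the component $S'$ of $S_g\setminus\bar M$ supplied by Lemma~\ref{haha}. Then $a_i\subset\widehat{S}$ for every $i$, because $F(a_i)=\bar a_i\subset S'$. Now cut $\widehat{S}$ along the curves of $M_1$ lying in it. Each such curve bounds a disk-with-punctures, and no $a_i$ can lie inside one of these disks: otherwise $\bar a_i=F(a_i)$ would be null-homotopic in $S_g$, contradicting that $\bar a_i$ is non-separating. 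Hence all the $a_i$ survive in a single component $S$ of $\widehat{S}\setminus(M_1\cap\widehat{S})$, which is a component of $S_{g,p}\setminus M$. This is the desired component; phrased in the language used for Lemma~\ref{haha}, the components $S(1),\dots,S(n-1)$ containing $a_1,\dots,a_{n-1}$ all coincide.

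For $\rho(B_n)$-invariance I would first check that $\rho(B_n)$ preserves $M$. For each $i$ the generator $\sigma_i$ fails to commute with at most two elements of $X_n$ (those among $\{\sigma_{i-1},\sigma_{i+1}\}$), so for $n$ large it commutes with at least two distinct $\sigma_j,\sigma_{j'}\in X_n$; thus $\rho(\sigma_i)$ preserves $\CRS(\rho(\sigma_j))$ and $\CRS(\rho(\sigma_{j'}))$ and hence their intersection, which equals $M$: by the lemma preceding Lemma~\ref{haha2} every curve of $\CRS(\rho(X_n))$ is of type $A$ or $C$, so a curve carrying both labels $\sigma_j$ and $\sigma_{j'}$ must be of type $C$. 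Therefore $\rho(B_n)$ permutes the components of $S_{g,p}\setminus M$. To see that it fixes $S$, take $g\in B_n$: the curve $\rho(g)(a_1)$ is disjoint from $M$, and its forgetful curve is $\rho_0(g)(\bar a_1)$, which lies in $\rho_0(g)(S')=S'$ by the invariance clause of Lemma~\ref{haha}; since $\rho_0(g)(\bar a_1)$ is non-separating it is not null-homotopic, so $\rho(g)(a_1)$ lies in $\widehat{S}$ and is not inside any disk-with-punctures, whence $\rho(g)(a_1)\in S$. As $a_1\in S$ as well, $\rho(g)(S)$ is the component of $S_{g,p}\setminus M$ containing $\rho(g)(a_1)$, namely $S$. (If one wants $a_i$ for even $i$ in the statement, take $a_i=\rho(h_i)(a_1)$ for any $h_i$ with $h_i\sigma_1 h_i^{-1}=\sigma_i$; the same argument shows these also lie in $S$.)

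The step I expect to be the real obstacle, and the reason this is not a verbatim repetition of the proof of Lemma~\ref{haha}, is that the Euler-characteristic count used there does not close up on $S_{g,p}$: the number of punctures $p$ is unconstrained by the hypotheses, so one cannot rule out $S(1),\dots,S(n-1)$ being pairwise distinct directly from an inequality on $\chi(S_{g,p})=2-2g-p$. The resolution proposed above is to offload the counting to the closed surface through Lemma~\ref{haha} and to control the punctures with the single elementary fact that a simple closed curve becoming null-homotopic after filling the punctures bounds a disk-with-punctures, hence is ``peripheral'' relative to the essential curves $a_i$; this is what makes cutting down from $\widehat S$ to $S$ harmless. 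A secondary point worth spelling out carefully is the identification of $\CRS(\rho(\sigma_i))\cap\CRS(\rho(\sigma_j))$ with $M$, which rests on the just-proven classification of the curves of $\CRS(\rho(X_n))$ into types $A$ and $C$.
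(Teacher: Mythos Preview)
Your overall strategy---push everything to $S_g$ through the forgetful map, invoke Lemma~\ref{haha} there, and lift back---is reasonable, and your treatment of the curves in $M_1$ (those bounding punctured disks) is correct. But there is a genuine gap in the handling of $M_0$. You assert that $F$ induces a bijection between the components of $S_{g,p}\setminus M_0$ and those of $S_g\setminus\bar M$; this is not justified, because nothing prevents two distinct $C$-curves $c,c'\in M_0$ from having the same forgetful curve in $\bar M$. The lemma immediately preceding Lemma~\ref{haha2} only controls the \emph{type} of the forgetful curve, not the injectivity of the forgetful map on $M$. When such $c,c'$ occur they cobound a punctured annulus in $S_{g,p}$, and cutting along both produces strictly more components than cutting $S_g$ along their common isotopy class; your $\widehat S$ is then not well-defined as stated. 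The gap is repairable by the same mechanism you used for $M_1$: parallel $C$-curves only peel off punctured annuli from the component corresponding to $S'$, and the essential curves $a_i$ cannot lie in those annuli, so they all survive in a single ``large'' piece. But this step is missing from your argument, and it is exactly the kind of issue you flagged as the real obstacle.

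The paper's proof takes a quite different and shorter route that bypasses the forgetful map entirely. It works directly on $S_{g,p}$: if $S(1)\neq S(3)$, then by total symmetry there is some $h\in\rho(B_n)\subset\PMod(S_{g,p})$ with $h(S(1))=S(3)$, and since elements of $\PMod(S_{g,p})$ fix the punctures pointwise, $S(1)$ must contain no punctures at all. Once $S(1)$ (and by symmetry every $S(i)$) is puncture-free, the Euler-characteristic count from the proof of Lemma~\ref{haha} applies verbatim to the disjoint subsurfaces $S(1),\dots,S(n-1)$ sitting inside $S_g$, and the contradiction follows. This one-line reduction avoids all the bookkeeping about how $\CRS(\rho(X_n))$ compares to $\CRS(\rho_0(X_n))$ and sidesteps the parallel-curve issue altogether. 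Your approach, once patched, has the virtue of reusing Lemma~\ref{haha} more structurally rather than reusing its proof, but at the cost of the extra annulus analysis.
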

\begin{proof}
Let $S(i)$ be the component of $S_{g,p}-M$ such that $a_i\in S(i)$. If $S(1)\neq S(3)$, then there is an element $h\in \rho(B_n)$ such that $h(S(1))=S(3)$. Thus $S(1)$ does not contain any punctures because elements in $\PMod(S_{g,p})$ fixes punctures pointwise. The remainder of the proof follows Lemma \ref{haha} exactly. \qedhere

\end{proof}
Let $S$ be the subsurface of $S_{g,p}$ that is obtained from Lemma \ref{haha2}. We now prove the main lemma.
\begin{lemma}\label{transvection2}
There exists an element $\tau\in \PMod(S_{g,p})$ such that
\begin{itemize}
\item $\tau$ commutes with $\rho(\sigma_i)$ for every $i$, and
\item there exists an integer $k$ such that $\tau^{-1}\rho(\sigma_i)=T_{a_i}^k$, for $0<i<n$
\end{itemize}
\end{lemma}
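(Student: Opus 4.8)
The plan is to build $\tau$ on the subsurface $S$ exactly as in Lemma~\ref{transvection}, now working in $\PMod(S_{g,p})$, and then to extract the common twisting power $k$ from the A-curve behavior. First I would establish the analogue of Step 1 of Lemma~\ref{transvection}: since $\rho(\sigma_1)$ has empty canonical reduction system away from $a_1$ (the only A-curve with label $\sigma_1$, by the previous lemma) and it commutes with $\rho(\sigma_3)$ and hence preserves $a_3$, the restriction $\rho(\sigma_1)$ to the complement $Y_1^c$ of a regular neighborhood $Y_1$ of $a_1$ cannot be pseudo-Anosov on the piece not containing $a_3$. The difference from the separating case is that here $Y_i$ is an annular neighborhood of the nonseparating curve $a_i$, so $\rho(\sigma_i)$ restricted to $Y_i^c$ is periodic (possibly after recording a twist about $a_i$), and the family $\{\rho(\sigma_i)\}$ restricted to the big complementary piece $R$ forms a totally symmetric set of torsion elements; by Proposition~\ref{prop:mcgtorsion} and the genus bound, $\rho(\sigma_1)$ and $\rho(\sigma_3)$ agree on $R$. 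Claim~\ref{finiteorder} (a nontrivial finite-order mapping class is not the identity on any subsurface) then forces them to agree on all of $S-Y_1-Y_3$, and the three gluing/uniqueness claims used in Step~2 of Lemma~\ref{transvection} carry over verbatim to produce a well-defined finite-order $\tau\in\Mod(S)$ with $\tau|_{Y_i^c}=\rho(\sigma_i)|_{Y_i^c}$ for all $i$, after running the Step~3 propagation argument (transpositions $\tau_{13}=\tau_{1(j+2)}=\tau_{j(j+2)}=\tau_{ij}$).

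Next I would observe that $\tau^{-1}\rho(\sigma_i)$ is supported on $Y_i$, an annulus about $a_i$, hence equals $T_{a_i}^{k_i}$ for some integer $k_i$. Because $\sigma_1$ and $\sigma_i$ (for odd $i<n$) are conjugate in $B_n$ by an element $h$ with $\rho(h)(a_1)=a_i$, and because $\tau$ was constructed to be conjugation-invariant under such $h$ (it commutes with each $\rho(\sigma_i)$ and is the unique finite-order element with the stated restrictions), conjugating $\tau^{-1}\rho(\sigma_1)=T_{a_1}^{k_1}$ gives $\tau^{-1}\rho(\sigma_i)=T_{a_i}^{k_1}$; so all $k_i$ are equal to a single $k$. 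For even $i$ one uses that $\sigma_i$ is conjugate to $\sigma_1$ as well (all standard generators of $B_n$ are conjugate), and the A-curve $a_i:=\rho(h)(a_1)$ for the appropriate $h$ realizes $\tau^{-1}\rho(\sigma_i)=T_{a_i}^k$ by the same transport; one should check $\tau$ still commutes with $\rho(\sigma_i)$ for even $i$, which follows since $\tau$ commutes with everything in $\rho(B_n)$ by construction (it agrees with each odd generator off a small annulus and these generate). Finally I would confirm $\tau\in\PMod(S_{g,p})$, i.e. that $\tau$ fixes punctures pointwise: this is automatic because $\tau^{-1}\rho(\sigma_i)=T_{a_i}^k$ lies in $\PMod$ and $\rho(\sigma_i)\in\PMod$, so $\tau=\rho(\sigma_i)T_{a_i}^{-k}\in\PMod(S_{g,p})$.

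The main obstacle I anticipate is the passage from $\Mod$-level constructions (where Lemma~\ref{transvection} lives) to the punctured setting: one must be careful that the finite-order element $\tau$ built on $S\subset S_{g,p}$ genuinely fixes the punctures, that the annular neighborhoods $Y_i$ can be chosen $\rho(B_n)$-equivariantly up to isotopy so that the twisting powers are genuinely comparable under conjugation, and that no puncture-pushing subtlety obstructs the uniqueness claims underlying $\tau_{13}=\tau_{ij}$. A secondary technical point is handling even-indexed $\sigma_i$ uniformly: there is no a priori A-curve $a_i$ for even $i$ in $\CRS(\rho(X_n))$, so $a_i$ must be produced as $\rho(h)(a_1)$ for a braid $h$ conjugating $\sigma_1$ to $\sigma_i$, and one must verify this is consistent with the braid relations — but this consistency is exactly what the next step (concluding $k=\pm1$ from the braid relation) will exploit, so here it suffices to record the existence of such $a_i$ and the identity $\tau^{-1}\rho(\sigma_i)=T_{a_i}^{k}$.
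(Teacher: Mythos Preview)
Your outline follows the template of Lemma~\ref{transvection} faithfully, but it misses the main new ingredient of the punctured case: the complement $S_{g,p}\setminus S$, where $S$ is the $\rho(B_n)$-invariant piece from Lemma~\ref{haha2}. You construct $\tau$ only in $\Mod(S)$ and then compute $\tau^{-1}\rho(\sigma_i)$ as if it were a global element of $\PMod(S_{g,p})$; the final ``confirmation'' that $\tau\in\PMod(S_{g,p})$ via $\tau=\rho(\sigma_i)T_{a_i}^{-k}$ is circular, since that identity has only been established on $S$. The paper's proof addresses this head-on: its Step~1 has three substeps, the first of which shows $\rho(\sigma_1\sigma_3^{-1})=\mathrm{id}$ on $S_{g,p}\setminus S$ by invoking Proposition~\ref{prop:c} (only C-curves live there), and the third of which glues the two regions together by showing that the potential multitwist discrepancy about the C-curves vanishes---this uses the cyclicity of the $B_n$-action on C-curves (Lemma~\ref{lem:curveaction}) and a conjugation trick with $\sigma_1\sigma_3^{-1}$ versus $\sigma_1\sigma_5^{-1}$. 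Only after Step~1 holds on all of $S_{g,p}\setminus\{a_1,a_3\}$ does the paper build $\tau$ with $\tau|_{S_{g,p}\setminus a_i}=\rho(\sigma_i)|_{S_{g,p}\setminus a_i}$ globally.

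Two smaller points: your assertion that ``$\tau$ commutes with everything in $\rho(B_n)$ \dots\ (it agrees with each odd generator off a small annulus and these generate)'' is doubly wrong---the odd $\sigma_i$ do not generate $B_n$, and agreeing off an annulus does not by itself yield commutation (one must also check $\tau(a_i)=a_i$). Also, once $\tau$ is correctly built on all of $S_{g,p}$ with $\tau|_{S_{g,p}\setminus a_i}=\rho(\sigma_i)|_{S_{g,p}\setminus a_i}$ for every $i<n$, the equality of the exponents $k_i$ is immediate from conjugacy of the $\sigma_i$; your more elaborate transport argument is unnecessary.
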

\begin{proof}
The proof is very similar to the proof of Lemma \ref{transvection}. We break the proof into several steps.
\begin{itemize}
\item
\medskip \textsc{Step 1:} \emph{We claim that $\rho(\sigma_1)$ and $\rho(\sigma_3)$ are equal on $S_{g,p}-a_1-a_3$. } \\
By Lemma \ref{haha}, we know that $S$ contains all type $A$ curves. Therefore $S$ is invariant under $\rho(B_n)$. We obtain new representations $\rho_1,\rho_2$ from $\rho$ by restricting to $S$ and $S_{g,p}-S$ respectively.
\[
(\rho_1,\rho_2): B_n\to \Mod(S)\times \Mod(S_{g,p}-S)
\]
We break the proof into three steps.
\begin{itemize}
\item
\medskip \textsc{Substep 1:} \emph{$\rho(\sigma_1\sigma_3^{-1})=id$ on $S_{g,p}-S$.} \\
This follows from Proposition \ref{prop:c} because on $S_{g,p}-S$, all elements in $\CRS(\rho_2(X_n))$ are C-curves.

\item
\medskip \textsc{Substep 2:} \emph{$\rho(\sigma_1\sigma_3^{-1})=id$ on $S-a_1-a_3$.}
\\ The proof is the same as Step 1 of Lemma \ref{transvection}.
\item 
\medskip \textsc{Substep 3:} \emph{$\rho(\sigma_1\sigma_3^{-1})=id$ on $S_{g,p}-a_1-a_3$.}
\\ From the first two substeps,  we know that a power of $\rho(\sigma_1\sigma_3^{-1})$ is a multitwist about C-curves on $S_g-a_1-a_3$. Similarly,  a power of $\rho(\sigma_1\sigma_5^{-1})$ is a multitwist about C-curves on $S_g-a_1-a_5$. We know $\sigma_1\sigma_3^{-1}$ and $\sigma_1\sigma_5^{-1}$ are conjugate to each other by an element $h\in B_n'$ and the action of $B_n$ on the set of C-curves is cyclic by Substep 1. This implies that on $S_g-a_1-a_3-a_5$, up to a power, we have $\rho(\sigma_1\sigma_3^{-1})=\rho(\sigma_1\sigma_5^{-1})$. This shows $\rho(\sigma_3\sigma_5^{-1})$ is a periodic element on $S_g-a_1-a_3-a_5$. However by Substep 2, $\rho(\sigma_3\sigma_5^{-1})$  is identity on $S-a_3-a_5$, which implies that  $\rho(\sigma_3\sigma_5^{-1})$ is identity on $S_g-a_1-a_3-a_5$ by Claim \ref{finiteorder}. This further shows that $\rho(\sigma_3)=\rho(\sigma_5)$ on $S_g-a_3-a_5$.  By symmetry, we obtain the claim of Step 1.
\end{itemize}

\item
\medskip \textsc{Step 2:} \emph{There exists $\tau\in \PMod(S_{g,p})$ such that
\[
\tau|_{S_{g,p}-a_1}=\rho(\sigma_1)|_{S_{g,p}-a_1}\text{ and }\tau|_{S_{g,p}-a_3}=\rho(\sigma_3)|_{S_{g,p}-a_3}.\]
}
\item
\medskip \textsc{Step 3:} \emph{$\tau$ also satisfies that 
$$ \tau|_{S_{g,p}-a_i}=\rho(\sigma_i)|_{S_{g,p}-a_i} \text{ for $i<n$} $$}
\end{itemize}
The proofs of Steps 2 and 3 are the same as the proofs of Steps 2 and 3 of Lemma \ref{transvection} and are omitted. Since $\tau^{-1}\rho(\sigma_i)$ is identity on $S_{g,p}-a_i$, we know that $\tau^{-1}\rho(\sigma_i)$ is a Dehn twist along $a_i$. Thus the lemma follows.
\end{proof}
We now finish the proof of Theorem \ref{main1} when the type A curve of $\CRS(\rho(X_n))$ is a single non-separating curve. By Claim \ref{transvection2}, a transvection of the original $\rho$ by $\tau$ gives us a new representation $\rho'$ which satisfies that $\rho'(\sigma_i)=T_{a_i}^k$. Since $\sigma_i$ and $\sigma_{i+1}$ satisfy the braid relation, we know that $T_{a_i}^k$ and $T_{a_{i+1}}^k$ satisfy the braid relation. McCarthy \cite{braidrelation} finds that if $T_a^{k}$ and $T_b^{k}$ satisfy the braid relation, then $k=\pm 1$ and $i(a,b)=1$, which finishes the proof. We then know that up to a transvection, the original representation is either $\rho_s$ or $\rho_{-s}$.

\section{How Theorem \ref{main1} implies Corollary \ref{maincor}}

Let $\phi: \PMod(S_{g,p})\to \PMod(S_{h,q})$ be a nontrivial homomorphism for $h\le 2g-1$. We only sketch the proof of \cite[Proposition 1.6, Chapter 1-8]{AS}, which states that $\phi$ maps a (right) Dehn twist along a non-separating curve to a (possibly left) 
Dehn twist along a non-separating curve. For how to use   \cite[Proposition 1.6]{AS} to prove Corollary \ref{maincor}, we refer the reader to \cite[Chapter 9-10]{AS}.

\begin{proposition}
Let $g>23$ and $h\le 2g-1$. Let $\phi: \PMod(S_{g,p})\to \PMod(S_{h,q})$ be a nontrivial homomorphism. Then $\phi$ maps a (right) Dehn twist along a non-separating curve to a (possibly left)  Dehn twist along a non-separating curve. 
\end{proposition}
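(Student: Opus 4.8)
The strategy is to reduce the statement to an application of Theorem~\ref{main1} by finding a suitable braid subgroup of $\PMod(S_{g,p})$ and studying its image under $\phi$. First I would recall the standard chain of curves $c_1,\dots,c_{2g+1}$ on $S_{g,p}$ and the associated \emph{hyperelliptic} embedding $B_{2g+2}\hookrightarrow \PMod(S_{g,p})$ (or, in the punctured case, a slightly larger braid group) sending the standard generator $\sigma_i$ to the Dehn twist $T_{c_i}$. Since $g>23$, we have $2g+2$ large enough and $h \le 2g-1 < (2g+2)-2$, so Theorem~\ref{main1} applies to the composition $\phi\circ(B_{2g+2}\hookrightarrow \PMod(S_{g,p}))$. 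The conclusion is that, up to transvection, this composite homomorphism is trivial, $\rho_s$, or $\rho_{-s}$.

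The key steps, in order, are: (1) exhibit the braid subgroup $\langle T_{c_1},\dots,T_{c_{2g+1}}\rangle \cong B_{2g+2}$ inside $\PMod(S_{g,p})$ and note that the $T_{c_i}$ are all conjugate in $\PMod(S_{g,p})$; (2) apply Theorem~\ref{main1} to $\phi$ restricted to this subgroup to learn that $\phi(T_{c_i}) = \psi \cdot T_{a_i}^{\pm 1}$ for a fixed curve-data assignment $a_i$ (a chain of curves in $S_{h,q}$) and a fixed element $\psi$ commuting with all the $\phi(T_{c_i})$, OR $\phi(T_{c_i}) = \psi$ is independent of $i$; (3) rule out the trivial/cyclic case: if $\phi(T_{c_i})$ were independent of $i$, then since the Dehn twists along non-separating curves are all conjugate and generate $\PMod(S_{g,p})$ (which is perfect for $g\ge 3$), $\phi$ would be trivial, contradicting the hypothesis; (4) rule out the transvection factor $\psi$ being non-trivial: because $\PMod(S_{h,q})$ contains no nontrivial central-type obstruction here, or more carefully, because the transvecting element $\psi$ must commute with $\phi$ of the \emph{entire} group $\PMod(S_{g,p})$ (using that the chain generates and $\phi$ is defined on all of $\PMod(S_{g,p})$, not just the braid subgroup) and $Z(\PMod(S_{h,q}))$ is trivial for $h\ge 3$, we get $\psi$ trivial; (5) conclude $\phi(T_{c_1}) = T_{a_1}^{\pm 1}$ with $a_1$ non-separating, and since $T_{c_1}$ is an arbitrary representative of the (unique) conjugacy class of Dehn twists along non-separating curves, the statement follows.

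\textbf{Main obstacle.} The delicate point is step (4), controlling the transvection. Theorem~\ref{main1} only tells us that the restriction to the braid subgroup is a transvection of $\rho_s$ or $\rho_{-s}$; the transvecting element $\psi$ a priori only commutes with the images of the $T_{c_i}$, $i\le 2g+1$. To upgrade this I would use that $T_{c_1},\dots,T_{c_{2g+1}}$ generate $\PMod(S_{g,p})$ (or at least a finite-index, in fact all of it for the relevant $g$), so $\psi$ commutes with all of $\phi(\PMod(S_{g,p}))$; then either $\phi(\PMod(S_{g,p}))$ has trivial centralizer in $\PMod(S_{h,q})$ when its image is ``large'' (e.g.\ contains a pseudo-Anosov, or is irreducible), forcing $\psi=1$, or one argues directly that a nontrivial $\psi$ commuting with $T_{a_1}^{\pm 1}$ and with the braid relations forces $\psi$ to be supported away from the chain $\{a_i\}$, whence $\psi$ lies in a subsurface that one checks is empty. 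A secondary subtlety is the presence of punctures: one must make sure the hyperelliptic-type braid subgroup is taken inside $\PMod$ (punctures fixed pointwise) and that Theorem~\ref{main1}'s hypotheses on $n = 2g+2$ versus the genus $h$ of the target are met, which is exactly where the bound $h \le 2g-1 < n-2$ and $g > 23$ (so $n > 23$ after adjusting constants) are used.
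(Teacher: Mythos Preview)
Your overall strategy matches the paper's: embed $B_{2g+2}$ via the chain $c_1,\dots,c_{2g+1}$, apply Theorem~\ref{main1}, then eliminate the cyclic case and the transvection. However, both elimination steps in your proposal rest on a false premise.

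\textbf{The generation error.} You assert (twice) that $T_{c_1},\dots,T_{c_{2g+1}}$ generate $\PMod(S_{g,p})$, or at least a finite-index subgroup. This is incorrect for $g\ge 3$: the subgroup they generate is the \emph{hyperelliptic} mapping class group, which has infinite index. Consequently:
\begin{itemize}
\item In step~(3), knowing $\phi(T_{c_i})=\psi$ for all $i$ in the chain does \emph{not} imply that the image of $\phi$ is cyclic, so perfectness of $\PMod(S_{g,p})$ alone does not force $\phi$ to be trivial. The paper instead uses that $\phi(T_aT_b^{-1})=1$ for every pair $a,b$ in the $\PMod(S_{g,p})$-orbit of $(c_1,c_3)$, and then writes a single Dehn twist $T_c$ as a product of three such elements via the \emph{lantern relation}, yielding $\phi(T_c)=1$.
\item In step~(4), you cannot conclude that $\psi$ commutes with $\phi(\PMod(S_{g,p}))$ from commutation with the $\phi(T_{c_i})$ alone. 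The paper fills this gap by exhibiting several \emph{other} chains of length $2g+1$ (built from the Humphries curves $c_4',c_5',c_{2g+1}^m$) and applying Theorem~\ref{main1} to each; comparing the resulting transvections shows that the same $\tau$ commutes with $\phi$ of the full Humphries generating set, hence with $\phi(\PMod(S_{g,p}))$.
\end{itemize}

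\textbf{The centralizer argument is insufficient.} Even after repairing the above, your step~(4) proposes to conclude $\psi=1$ either from $Z(\PMod(S_{h,q}))=1$ or from a vague support argument. The first does not apply: $\psi$ lies in the centralizer of the \emph{image}, not the center of the target, and that centralizer can be nontrivial (it certainly contains boundary twists of the subsurface filled by the chain $\{a_i\}$). The paper instead compares two \emph{chain relations}, $(T_{c_1}\cdots T_{c_4})^5=T_d=(T_{c_1}\cdots T_{c_4}T_{c_5'})^6$, applies $\phi$, and extracts $\tau^{10}=1$ together with the geometric fact that $\overline{c_1},\overline{c_3},\overline{c_5'}$ bound a pair of pants; since the finite-order element $\tau$ fixes this pair of pants it is the identity by the Nielsen realization argument (Claim~\ref{finiteorder}). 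Your proposal does not contain an analogue of this step.
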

\begin{proof}
Let $i: B_{2g+2}\to \PMod(S_{g,p})$ be the standard embedding which maps generators $\sigma_i \in B_{2g+2}$ to the Dehn twists $T_{c_i} \in \PMod(S_{g,p})$, where the curves $c_i$ form a chain, i.e., $i(c_i,c_{i+1}) = 1$ and $i(c_i,c_j) = 0$ for $|i-j| > 1$. Let
\[
\rho:=\phi\circ i: B_{2g+2}\to \PMod(S_{g,p})\to \PMod(S_{h,q})
\]
be the composition. By Theorem \ref{main1}, the composition $\rho$ is either standard or has cyclic image up to transvection. We break the following discussion into cases depending on $\rho$.

\p{\boldmath Case 1: $\rho$ has cyclic image.}\\
\indent
If $\rho$ is cyclic, then $\rho(B_{2g+2}')$ is trivial. We aim to show that $\phi$ must be trivial. As the Dehn twist $T_c$ for $c$ a non-separating curve normally generates $\Mod(S_g)$, it suffices to show $\phi(T_c)=1$ for some non-separating curve $c$. Since $\rho(\sigma_1\sigma_3^{-1})=1$, we know that $\phi(T_{c_1}T_{c_3}^{-1})=1$. Since $c_1,c_3$ are both non-separating and their union is not the boundary of any subsurface, we know by conjugation that $\phi(T_cT_d^{-1})=1$ for any pair of curves $c,d$ satisfying that both are non-separating and do not bound a subsurface.

We will make use of the lantern relation (see e.g., \cite[Chapter 5]{FM}):
\begin{equation}\label{lantern}
T_c=T_{a_1}T_{b_1}^{-1}T_{a_2}T_{b_2}^{-1}T_{a_3}T_{b_3}^{-1}.
\end{equation}
where $c,b_1,b_2,b_3$ bound a sphere with four boundary components $F=S_{0,4}$ and $a_1,a_2,a_3$ are three curves on $F$. We can position all seven curves as non-separating curves such that no pair of them bounds a subsurface. We apply $\phi$ to this lantern relation to obtain $\phi(T_c)=1$. This concludes the proof for Case 1.

\p{\boldmath Case 2: $\rho$ is standard up to transvection.}
\\ \indent Now \[
\rho(\sigma_i) = \phi(T_{c_i}) = T_{\overline{c_i}} \cdot \tau\] where $\overline{c_i}$ is non-separating, $\{\overline{c_i}\}$ is a chain of curves in $S_{h,q}$, and $\tau\in\PMod(S_{h,q})$ commutes with $\rho(B_{2g+2})$. We aim to show that that $\tau$ is the identity. We claim that $\tau$ commutes with every element in $\phi(\PMod(S_{g,p}))$. To prove this, we make use of the Humphries generating set \cite{Humphries}. 
\begin{figure}[H]
\includegraphics[scale=0.8]{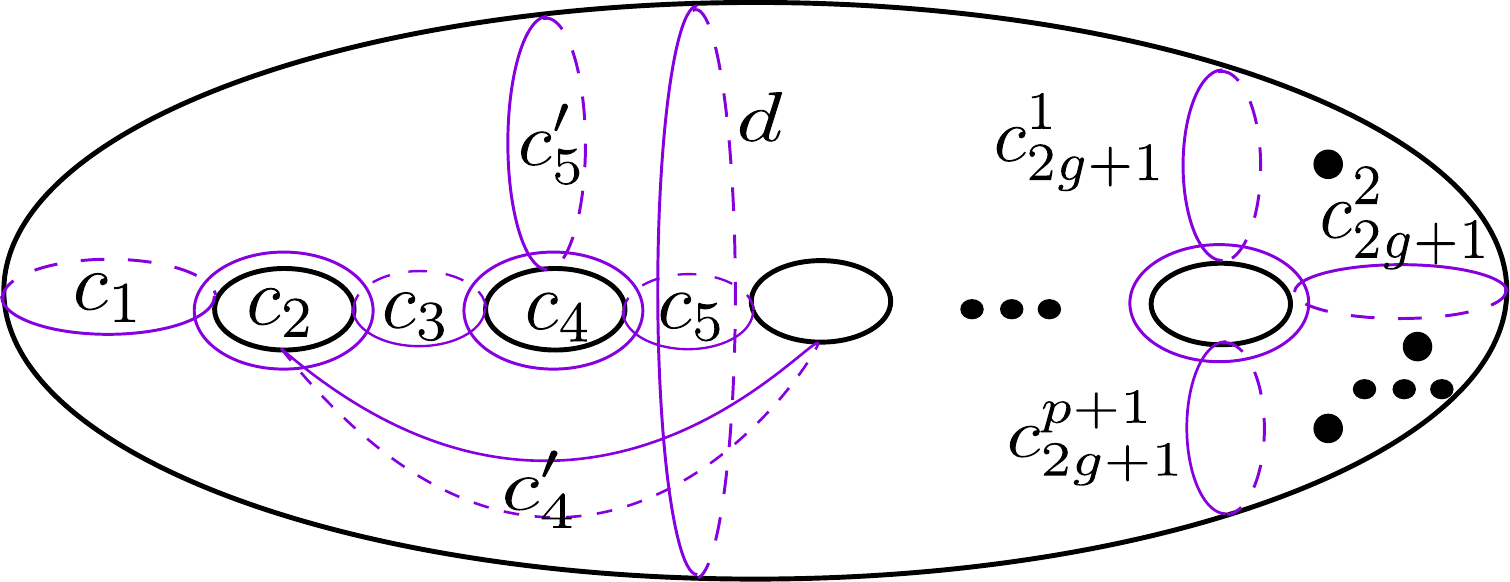}
\caption{Humphries generating curves}
\label{generatingset}
\end{figure}
We know that \[\{
c_1,...,c_{2g},c_{2g+1}^m\}\]is a chain of $2g+1$ curves for any $1\le m\le p+1$. Therefore, we know that $\tau$ commutes with $\phi(T_{c_{2g+1}^m})$ for any $1\le m\le p+1$. Since 
\[\{c_5',c_4, c_3, c_4', c_6,c_7,...,c_{2g},c_{2g+1}^1\}\]
is also a chain of curves of length $2g+1$, we know in particular  that $\tau$ commutes with $\phi(T_{c_5'})$. Thus, we know that $\tau$ commutes with $\phi(\PMod(S_{g,p}))$ because Dehn twists along 
\[\{c_1,...,c_{2g},c_{2g+1}^1,...,c_{2g+1}^{p+1},c_5'\}\] generate the group $\PMod(S_{g,p})$ by \cite{Humphries}. 

We know that $\tau$ commutes with $\phi(\PMod(S_{g,p}))$. By the chain relation (see, e.g., \cite[Chapter 9]{FM}), we know the curves in Figure \ref{generatingset} satisfy
\[
T_d=(T_{c_1}T_{c_2}T_{c_3}T_{c_4})^5.
\]
By applying $\phi$ to both sides, we obtain
\[
\phi(T_d)= (T_{\overline{c_1}}\tau T_{\overline{c_2}}\tau T_{\overline{c_3}}\tau T_{\overline{c_4}}\tau)^5.
\]
Again from the chain relation, there exists a separating curve $\overline{d}$ on $S_{h,q}$ such that
\begin{equation}\label{1}
\phi(T_d)=(T_{\overline{c_1}} T_{\overline{c_2}} T_{\overline{c_3}} T_{\overline{c_4}})^5\tau^{20}=T_{\overline{d}}\tau^{20}.
\end{equation}
Since $\overline{c_1},\overline{c_2}, \overline{c_3}, \overline{c_4}, \overline{c_5}$ form a chain, we know that \[i(\overline{c_5},\overline{d})\neq 0.\]
Since $\tau$ commutes with $T_{\overline{c_5}}$, we know \[
\overline{d}\notin \CRS(\tau).\]
Another chain relation gives
\[
T_d=(T_{c_1}T_{c_2}T_{c_3}T_{c_4}T_{c_5'})^6.
\]
By applying $\phi$ to both sides, we know that
\[
\phi(T_d)=(T_{\overline{c_1}}\tau T_{\overline{c_2}}\tau T_{\overline{c_3}}\tau T_{\overline{c_4}}\tau T_{\overline{c_5'}}\tau)^6.
\]
Then there exist two curves $\overline{d_1},\overline{d_2}$ (possibly peripheral) such that 
\begin{equation}\label{2}
\phi(T_d)=(T_{\overline{c_1}} T_{\overline{c_2}} T_{\overline{c_3}} T_{\overline{c_4}}T_{\overline{c_5'}})^6\tau^{30}=T_{\overline{d_1}}T_{\overline{d_2}}\tau^{30}.
\end{equation}
By Equation \eqref{1} and \eqref{2}, we know 
\[
T_{\overline{d_1}}T_{\overline{d_2}}\tau^{10}=T_{\overline{d}}.
\]
Then as $\tau$ commutes with $T_{\overline{d_1}}T_{\overline{d_2}}$, we have 
\[
\{\overline{d}\}=\CRS(T_{\overline{d}})=\CRS(T_{\overline{d_1}}T_{\overline{d_2}}\tau^{10})\subset \{\overline{d_1},\overline{d_2}\}\cup \CRS(\tau)
\]
This means that \[
\overline{d}\subset \{\overline{d_1},\overline{d_2}\},\]
which implies that $\overline{c_1},\overline{c_3},\overline{c_5'}$ bound a pair of pants and one of $\overline{d_1},\overline{d_2}$ is isotopically trivial. Thus we conclude that $\tau^{10}=1$.

We also know that $\tau$ preserves $\overline{c_1},\overline{c_2},\overline{c_3},\overline{c_4},\overline{c_5'}$. Then as a finite order element, $\tau$ is the identity on the pair of pants with boundary $\overline{c_1},\overline{c_3},\overline{c_5'}$. Thus $\tau =1$ by Claim \ref{finiteorder}.
\end{proof}

\bibliographystyle{alpha}
\bibliography{lin2}

\end{document}